\numberwithin{equation}{section}
\theoremstyle{plain}
\newtheorem{theorem}{Theorem}[section]
\newtheorem{lemma}[theorem]{Lemma}
\newtheorem{proposition}[theorem]{Proposition}
\newtheorem{corollary}[theorem]{Corollary}
\newtheorem{conjecture}[theorem]{Conjecture}
\theoremstyle{definition}
\newtheorem{definition}[theorem]{Definition}
\newtheorem{example}[theorem]{Example}
\newtheorem{remark}[theorem]{Remark}
\newtheorem*{remark*}{Remark}
\let\c@equation\c@theorem  
\newcommand{\rank}{\operatorname{rank}}
\DeclareMathOperator{\Sym}{Sym}
\newcommand{\id}{\operatorname{id}}
\newcommand{\CC}{{\mathbb C}}
\newcommand{\NN}{{\mathbb N}}
\newcommand{\ZZ}{{\mathbb Z}}
\newcommand\eps{\epsilon}
\newcommand{\mhcn}{\mathfrak{H}_n} 
\newcommand{\undla}{\underline{\lambda}}
\def\lam{\lambda}
\def\OP{\mathcal{P}}
\def\Sym{\mathfrak{S}}
\def\CC{\mathscr{C}^{c}}
\def\CP{\mathscr{P}^{c}}
\begin{document}

\title{On the (super)cocenter of Cyclotomic Sergeev algebras}

\author{Shuo Li }
\address{School of Mathematics and Statistics,
Beijing Institute of Technology, Beijing, 100081, P. R. China}
\email{3120225737@bit.edu.cn}

\author[Lei Shi]{Lei Shi\textsuperscript{\Letter}}\thanks{\Letter Lei Shi \qquad Email: leishi202406@163.com}
\address{School of Mathematics and Statistics\\
	Beijing Institute of Technology\\
	Beijing, 100081, P.R. China}
\email{leishi202406@163.com}
\begin{abstract}
We show that cyclotomic Sergeev algebra $\mhcn^{g}$ is symmetric when the level is odd and supersymmetric when the level is even. We give an integral basis for ${\rm Tr}(\mhcn^{g})_{\overline{0}}$, and recover Ruff's result on the rank of ${\rm Z}({\mhcn^{g}})_{\bar{0}}$ when the level is odd. We obtain a generating set of ${\rm SupTr}(\mhcn^{g})_{\overline{0}}$, which gives an upper bound of the dimension of ${\rm Z}({\mhcn^{g}})_{\bar{0}}$ when the level is even.
\end{abstract}



\maketitle
\section{Introduction}

The representation theory of symmetric groups $\Sym_n$ has developed into a large and
important area of mathematics, for example, Lie theory, geometry, topology and so
on. Furthermore, the representation theory of the associated Iwahori Hecke algebras of type $A$ as well as their degenerate and non-degenerate cyclotomic generalizations has been well-studied in the literature, see \cite{A3, K3,Ma} and references therein.

In \cite{Sch}, Schur showed that the study of spin (or projective) representation theory of $\Sym_n$ is equivalent to the study of linear representation of $\Sym_n^-.$
The later is “super-equivalent” to the representation theory of the so-called Sergeev algebra. Nazarov \cite{Na2}  introduced affine Sergeev algebra $\mhcn$ to study the spin (or projective) representations of the symmetric group $\Sym_n$ or equivalently, the representation of Sergeev algebra. The cyclotomic Sergeev algebra $\mhcn^{g}$ was introduced by Brundan and Kleshchev \cite{BK01} in the study of modular branching rules for $\widehat{\Sym}_{n}$. $\mhcn^{g}$ can also be viewed as a super version of the degenerate cyclotomic Hecke algebra. Moreover, Kang, Kashiwara and Tsuchioka \cite{KKT} showed that there is a non-trivial $\mathbb{Z}$-grading on cyclotomic Sergeev algebras $\mhcn^{g}$ using cyclotomic quiver Hecke superalgebras. The later give categorifications of highest weight modules for certain quantum groups or super quantum groups  \cite{KKO1,KKO2}.

For finite Hecke-Cilfford algebra, which is the analogue of non-degenerate version of Sergeev algebra, Wan and Wang \cite[Section 5.2]{WW} introduced a symmetrizing trace form for generic Hecke-Cilfford algebra using irreducible characters. Unfortunately, the symmetrizing trace form in \cite{WW} is only proved to be non-degenerate over the field $\mathbb{K}$ rather over the base ring $\mathbb{Z}[\frac{1}{2},q,q^{-1}]$. On the other hand, we don't know whether there is any symmetrizing trace form on cyclotomic non-degenerate and degenerate Hecke-Cilfford algebra or not. This is one of the motivations of our work. To state our main result, we need following definition, which is inspired by \cite[Section 4.1, 5.1]{WW}.

\begin{definition}\label{symmetric}
	
	Let ${\rm R}$ be an integral domain of characteristic different from 2, and ${\rm A}={\rm A}_{\overline{0}}\oplus {\rm A}_{\overline{1}}$ an ${\rm R}$-superalgebra,
	which is finitely generated projective as ${\rm R}$-module. $|\cdot|: {\rm A} \rightarrow \Bbb Z_{2}$ is the parity map.
	
	(i) The superalgebra ${\rm A}$ is called symmetric if there is an ${\rm R}$-linear map $t:{\rm A} \rightarrow {\rm R}$
	with $t({\rm A}_{\overline{1}})=0$ such that $t(xy)=t(yx)$ for any $x, y\in {\rm A}$ and
	$$\hat{t}: {\rm A}\rightarrow {\rm Hom}_{{\rm R}}({\rm A}, {\rm R}), \quad a \mapsto t(-\cdot a) $$
	is an $({\rm A}, {\rm A})-$superbimodule isomorphism. In this case, we call $t$ a symmetrizing form on ${\rm A}$;
	
	(ii) The superalgebra ${\rm A}$ is called supersymmetric if there is an ${\rm R}$-linear map $t:{\rm A} \rightarrow {\rm R}$
	with $t({\rm A}_{\overline{1}})=0$ such that  $t(xy)=(-1)^{|x| |y|}t(yx)$ for any homogeneous $x, y\in {\rm A}$ and
	$$\hat{t}: {\rm A}\rightarrow {\rm Hom}_{{\rm R}}({\rm A}, {\rm R}), \quad a \mapsto t(-\cdot a) $$
	is an $({\rm A}, {\rm A})-$superbimodule isomorphism. In this case, we call $t$ a supersymmetrizing form on ${\rm A}.$
\end{definition}

The first main result of this paper is following.

\begin{theorem}\label{dengerate}
	
	(i) If the level $d$ is odd, then the cyclotomic sergeev algebra $\mhcn^{g}$ is symmetric;
	
	(ii) If the level $d$ is even, then the cyclotomic sergeev algebra $\mhcn^{g}$ is supersymmetric.
\end{theorem}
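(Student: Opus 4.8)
The plan is to construct an explicit (super)symmetrizing form on $\mhcn^{g}$ by combining the known basis theorem for cyclotomic Sergeev algebras with a trace-type coefficient extraction, exactly in the spirit of the classical construction of the symmetrizing form on cyclotomic Hecke algebras of type $A$ (and on the degenerate affine case). Recall that $\mhcn^{g}$ has a PBW-type basis of the form $x_1^{a_1}\cdots x_n^{a_n}\, c_1^{\eps_1}\cdots c_n^{\eps_n}\, w$, where $0\le a_i\le d-1$, $\eps_i\in\{0,1\}$, $w\in\Sym_n$, and $d$ is the level. I would single out the ``top'' basis element $\omega := x_1^{d-1}\cdots x_n^{d-1}\, c_1\cdots c_n$ (the full power of each polynomial generator times the product of all Clifford generators), and define $t:\mhcn^{g}\to {\rm R}$ to be the ${\rm R}$-linear functional picking out the coefficient of $\omega$ in this basis, suitably normalized. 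The parity of $\omega$ is the parity of $n$ plus (a contribution from) the $c$-degree; one checks $t$ kills the odd part precisely because of how the parities of basis elements distribute, and the subtle point is that whether the resulting form is symmetric or supersymmetric is governed by the parity of $\omega$, which in turn is controlled by $d$: when $d$ is odd the shift $x_i\mapsto$ highest power interacts with the anti-automorphism in a way that makes $\omega$ even (hence symmetric), and when $d$ is even it makes $\omega$ odd (hence supersymmetric). This dichotomy is the structural heart of the statement, so I would isolate it as a lemma about the parity of the top basis element.

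The key steps, in order, are: (1) Fix the basis theorem for $\mhcn^{g}$ over ${\rm R}$ (finitely generated projective of the expected rank $d^n 2^n n!$), and fix the standard anti-involution $*$ on $\mhcn^{g}$ fixing the generators $x_i$, $c_i$ and sending $s_i\mapsto s_i$. (2) Define $t$ as the coefficient of the top element $\omega$ with respect to this basis, and verify $t({\mhcn^{g}}_{\bar 1})=0$ by a parity count. (3) Prove the trace property: $t(xy)=\pm t(yx)$ with the sign $(-1)^{|x||y|}$ appearing exactly when $d$ is even. The cleanest route is to check this on basis elements, reducing to a computation of how a product of two PBW monomials expands and which expansions can contribute a term proportional to $\omega$; only ``matched'' pairs contribute, and for those one tracks the Koszul signs produced by moving the Clifford generators $c_i$ past each other and past the $x_j$'s. (4) Prove nondegeneracy of $\hat t$: exhibit, for each basis element $b$, a (unique up to the relevant sign) basis element $b^{\vee}$ with $t(b\, b^{\vee})\ne 0$ and $t(b'\, b^{\vee})=0$ for other basis elements $b'$; this shows $\hat t$ carries the PBW basis to (a scalar multiple of) a dual basis, hence is an isomorphism of ${\rm R}$-modules, and then one checks it is a bimodule map using the trace property from step (3). (5) Assemble: steps (2)–(4) give exactly Definition \ref{symmetric}(i) when $d$ is odd and (ii) when $d$ is even.

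The main obstacle I anticipate is step (3), the (anti)symmetry of the trace together with the precise bookkeeping of signs. Expanding the product of two PBW monomials in $\mhcn^{g}$ requires using the cross relations $c_i x_j = \pm x_j c_i$, the Clifford relations $c_i^2 = 1$, $c_i c_j = -c_j c_i$, and the affine-Sergeev braid/cross relations for $s_i$ with $x_j$ and $c_j$ (the latter producing the extra terms with $x$-coefficients characteristic of the Sergeev algebra); keeping track of which rearrangements land on $\omega$ and with what sign is where the parity of $d$ enters and where an off-by-one sign error would silently break the whole argument. A secondary technical point is handling the case where ${\rm R}$ is not a field: one must phrase nondegeneracy as ``$\hat t$ is an isomorphism'' at the level of finitely generated projective modules, which I would do by producing an explicit ${\rm R}$-basis of $\mhcn^{g}$ and its $t$-dual basis as above, so that the claim becomes a statement about an invertible change-of-basis matrix over ${\rm R}$ (in fact with entries a signed permutation matrix after suitable normalization). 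Once the sign lemma in step (3) is nailed down, the remaining steps are routine and parallel the classical symmetric-algebra arguments for cyclotomic Hecke algebras.
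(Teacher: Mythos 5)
There is a genuine gap, and it sits at the very foundation of your construction: the choice of the ``top'' element $\omega = x_1^{d-1}\cdots x_n^{d-1}c_1\cdots c_n$. Including the full product of Clifford generators breaks both requirements of Definition \ref{symmetric}. First, the parity condition fails: $\omega$ has parity $n\bmod 2$ (not controlled by $d$ at all, contrary to your step (3) heuristic), so for odd $n$ your functional does not kill $({\mhcn^{g}})_{\overline{1}}$ (already for $n=1$, $d=1$ one has $t(c_1)=1$ with $c_1$ odd). Second, and worse, the trace property fails even where parities allow it: take $n=1$, $d=2$, $g(x_1)=x_1^2$, with basis $1,x_1,c_1,x_1c_1$. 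Your $t$ is the coefficient of $x_1c_1$, and then $t(x_1\cdot c_1)=1$ while $t(c_1\cdot x_1)=t(-x_1c_1)=-1$; since $|x_1||c_1|=0$, both symmetry and supersymmetry would force $1=-1$. The correct functional is the coefficient of $x_1^{d-1}\cdots x_n^{d-1}$ with \emph{no} Clifford factors and trivial symmetric-group part --- this is exactly the value of the paper's form $t_{n,d}$ on the PBW basis --- and the sign $(-1)^{(d-1)|x||y|}$ does not come from the parity of a top monomial, but from the fact that nonzero contributions force the $x_1$-exponents $a,a'$ to satisfy $a+a'\equiv d-1\pmod 2$ (via $\delta_{a+a',d-1}$ and the vanishing of the reduction polynomials $F_k$ for even $k$), which links the Koszul sign $(-1)^{ba'}$ from commuting $c_1^b$ past $x_1^{a'}$ to the parity of $d$.

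A secondary, but still real, problem is your nondegeneracy step: with the correct $\omega$ the Gram matrix on the PBW basis is \emph{not} a signed permutation matrix (e.g.\ $t(x_1^{a}\cdot x_1^{a'})=F_{a+a'-d}(a_{d-2},\dots)$ can be nonzero for $a+a'>d-1$, and the cross relations $s_ix_j$ produce further lower-order contributions), so the ``explicit dual basis'' shortcut does not go through as stated; at best one gets a unitriangular matrix with respect to a suitable order, which requires its own bookkeeping. The paper avoids this entirely: it defines $t_{n,d}=\theta_1\circ\cdots\circ\theta_n$ as a composition of Kleshchev's projections, imports the nondegeneracy (hence the superbimodule isomorphism $\hat t$) from \cite[Lemma 15.6.2]{K2}, and proves the (super)symmetry by induction on $n$, the key inductive input being $\theta_{n+1}(s_nxs_n)=\iota_{n-1}\circ\theta_n(x)$ together with the Mackey-type decomposition \eqref{decomposition}. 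If you want to salvage your direct approach, replace $\omega$ by $x_1^{d-1}\cdots x_n^{d-1}$ (with $I=\emptyset$, $w=1$) and be prepared to redo step (4) with a triangularity argument rather than a duality-by-permutation argument.
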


Theorem \ref{dengerate} implies that the situation in cyclotomic Sergeev algebra is slightly different from usual cyclotomic  Hecke algebra \cite{MM}. Actually, for a symmetric superalgebra ${\rm A}$, the degree zero part of its center ${\rm Z}({\rm A})_{\bar{0}}$ is isomorphic to the dual of degree zero part of its cocenter ${\rm Tr}({\rm A})_{\bar{0}}$ which is as in the non-super case. In contrast, for a supersymmetric superalgebra ${\rm A}$, the degree zero part of its center ${\rm Z}({\rm A})_{\bar{0}}$ is isomorphic to the dual of degree zero part of its supercocenter ${\rm SupTr}({\rm A})_{\bar{0}}$ (see Subsection \ref{basics} for details). We remark here that in general,  one can not find a symmetrizing form on $\mhcn^{g}$ when $d$ is even, we give an example in Example \ref{counterexamp}.

In \cite{Ru}, Ruff obtained a basis for the degree zero part of the center ${\rm Z}(\mhcn^{g})_{\bar{0}}$ when the level $d$ is odd, which immediately gave a classification of the super-blocks for $\mhcn^{g}$ in this case. However, when the level $d$ is even, it is an open probelm to give a basis or even the dimension of the degree zero part of the center ${\rm Z}(\mhcn^{g})_{\bar{0}}$. Theorem \ref{dengerate} implies that we can work on ${\rm SupTr}(\mhcn^{g})_{\bar{0}}$ which seems to be easier than ${\rm Z}(\mhcn^{g})_{\bar{0}}$.  Recently, the second author and Wan \cite{SW} gave a seperate condition, under which the cyclotomic Sergeev algebra was shown to be semisimple. It is natural to ask whether one can compute the characters of those simple modules in \cite{SW} as in \cite{WW} when $\mhcn^{g}$ is semisimple. To answer this question, an integral basis for the cocenter of cyclotomic Sergeev algebra $\mhcn^{g}$ is essential. These motivate our study of  the ${\rm Tr}(\mhcn^{g})_{\bar{0}}$ and ${\rm SupTr}(\mhcn^{g})_{\bar{0}}$  for cyclotomic Sergeev algebra $\mhcn^{g}$.

Our second main result of this paper constructs an integral basis for degree zero part of the cocenter ${\rm Tr}(\mhcn^{g})_{\bar{0}}$, where we refer the readers to \eqref{simple modules}, \eqref{simple modules of type M},  \eqref{label basis} and Subsection \ref{minimal element} for unexplained notations used here.

\begin{theorem}\label{cocenter}
	Suppose ${\rm R}$ is an integral domain with $2$ invertible. Then ${\rm Tr}(\mhcn^{g})_{\overline{0}}$ is a free ${\rm R}$-module with basis $$\bigl\{{\bf w}_\beta+[\mhcn^{g},\mhcn^{g}]_{\overline{0}}\bigm|\beta\in\widetilde{\CP_n}\bigr\}.
	$$ In particular, $\rank_{{\rm R}}{\rm Tr}(\mhcn^{g})_{\overline{0}}=|\widetilde{\CP_n}|=\begin{cases}|\mathscr{P}^{\mathsf{0},m}_n|, &\text{if $d=2m$ is even};\\
		|\mathscr{P}^{\mathsf{s},m}_n|, &\text{if $d=2m+1$ is odd}.
		\end{cases}$
\end{theorem}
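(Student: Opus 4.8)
The plan is to exhibit a spanning set for $\mathrm{Tr}(\mhcn^{g})_{\overline{0}} = \mhcn^{g}_{\overline{0}} / [\mhcn^{g},\mhcn^{g}]_{\overline{0}}$ indexed by $\widetilde{\CP_n}$, and then to show this set is $\mathrm{R}$-linearly independent; combined these give that the residues of the $\mathbf{w}_\beta$ form a basis. The natural starting point is the standard monomial (PBW-type) basis of $\mhcn^{g}$ coming from Brundan--Kleshchev, whose elements are products of a Clifford part, a polynomial part in the $x_i$, and a permutation part $w\in\Sym_n$. Working modulo the commutator subspace lets me conjugate the permutation part and cyclically permute factors, so each basis monomial is congruent modulo $[\mhcn^{g},\mhcn^{g}]$ to a ``minimal length'' representative attached to the cycle type of $w$ together with the combinatorial data recording the Clifford generators and the $x_i$-powers distributed along each cycle. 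This is exactly the kind of reduction used for degenerate affine/cyclotomic Hecke algebras and for affine Sergeev algebras; the new feature here is bookkeeping the Clifford $c_i$'s, whose anticommutation produces signs, and restricting to the even part $\mhcn^{g}_{\overline{0}}$, which forces parity constraints on the allowed decorations. The outcome of this step should be that $\{\mathbf{w}_\beta + [\mhcn^{g},\mhcn^{g}]_{\overline{0}} \mid \beta\in\widetilde{\CP_n}\}$ spans, where $\widetilde{\CP_n}$ is precisely the set of admissible decorated cycle-type data surviving the parity and relation constraints.

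For linear independence I would invoke Theorem~\ref{dengerate}: since $\mhcn^{g}$ is symmetric (when $d$ odd) or supersymmetric (when $d$ even), there is a nondegenerate (super)symmetrizing form $t$, hence a nondegenerate pairing between $\mathrm{Tr}(\mhcn^{g})_{\overline{0}}$ and $\mathrm{Z}(\mhcn^{g})_{\overline{0}}$ (respectively $\mathrm{SupTr}$ and the center). So it suffices to bound $\rank_{\mathrm{R}}\mathrm{Tr}(\mhcn^{g})_{\overline{0}}$ from below by $|\widetilde{\CP_n}|$. The cleanest route is to base change to a field (or to the generic point) where $\mhcn^{g}$ is split semisimple --- here I would use the semisimplicity results referenced as \cite{SW,WW} --- so that the cocenter is spanned by irreducible (super)characters, and count: the number of irreducible characters of the appropriate type (type $\mathtt{M}$ versus type $\mathtt{Q}$, which is where \eqref{simple modules} and \eqref{simple modules of type M} enter) equals $|\mathscr{P}^{\mathsf{0},m}_n|$ when $d=2m$ and $|\mathscr{P}^{\mathsf{s},m}_n|$ when $d=2m+1$, matching $|\widetilde{\CP_n}|$. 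Since rank can only drop under specialization, the spanning set of size $|\widetilde{\CP_n}|$ must already be a basis over $\mathrm{R}$, and then over any integral domain with $2$ invertible by base change from the universal case. The final cardinality assertion is then just the combinatorial identification of $\widetilde{\CP_n}$ with $\mathscr{P}^{\mathsf{0},m}_n$ or $\mathscr{P}^{\mathsf{s},m}_n$.

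The main obstacle I anticipate is the spanning step, specifically controlling the reduction modulo $[\mhcn^{g},\mhcn^{g}]_{\overline{0}}$ with full attention to signs and to the cyclotomic relation $x_1^d = (\text{lower order})$. One must check that all the ``rewriting moves'' (cyclic rotation of a monomial, conjugating $w$ to a minimal-length element of its conjugacy class, sliding $x_i$-powers and $c_i$'s along a cycle) can be performed while staying inside the even subspace and inside the span of the $\mathbf{w}_\beta$, and that the error terms produced by straightening (lower-degree in the $x_i$, or shorter permutations) are themselves already in that span --- this is a downward induction on a suitable monomial order, and the Clifford signs must be tracked carefully so that odd decorations genuinely die (in the symmetric case) or contribute to $\mathrm{SupTr}$ rather than $\mathrm{Tr}$ (in the supersymmetric case). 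I would also need Ruff's minimal-element combinatorics (Subsection~\ref{minimal element}) to guarantee the representatives $\mathbf{w}_\beta$ are well-defined and that distinct $\beta$ give genuinely different classes after the semisimple count is in place.
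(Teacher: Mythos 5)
Your skeleton matches the paper's: spanning of ${\rm Tr}(\mhcn^{g})_{\overline{0}}$ by classes of minimal representatives indexed by decorated cycle-type data, then linear independence by counting simple modules of the \emph{generic} algebra $\mhcn^{h}$ over ${\rm K}$ (Corollary \ref{semisimple2}, resting on \cite{SW}) and base change from $\ZZ[\frac{1}{2}][Q_1,\cdots,Q_m]$ via Proposition \ref{base change}; your counts $|\mathscr{P}^{\mathsf{0},m}_n|$ and $|\mathscr{P}^{\mathsf{s},m}_n|$ are the right ones for ${\rm Tr}$ (every simple module, of either type, contributes one dimension). Two inaccuracies in that half: invoking Theorem \ref{dengerate} is unnecessary, and in the even case it could not help anyway, since the supersymmetrizing form pairs ${\rm Z}(\mhcn^{g})_{\overline{0}}$ with ${\rm SupTr}$, not with ${\rm Tr}$; and ``rank can only drop under specialization'' is not the operative fact --- what actually closes the argument is the clause you add at the end, namely that the set is a basis over the universal ring and ${\rm Tr}$ commutes with base change.

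The genuine gap is the spanning step, which is where essentially all of the content of the theorem lies and which you only describe as ``the kind of reduction used for degenerate cyclotomic Hecke algebras,'' with straightening errors to be handled by ``a suitable monomial order.'' The mechanism that makes this reduction legitimate is missing from your plan. In the paper it is supplied by a new presentation of $\mhcn^{g}$ (Theorem \ref{Degbasis}) in which $x_1$ is identified with the order-$d$ generator $s_0$ of $W_{d,n}=G(d,1,n)$ subject to $g(s_0)=0$ and to the Clifford-twisted weak braid relation \eqref{weakly braid relation''} (note the twist by $1+(-1)^{a-1}c_1c_2$ --- this is not the naive $G(d,1,n)$ relation and is precisely the new input compared with the cyclotomic Hecke case). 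This presentation yields the rewriting Lemma \ref{expression}: any two reduced expressions of $w\in W_{d,n}$, multiplied by $c_I$, differ by terms of length at most $\ell(w)-2$; it is this lemma, combined with the minimal-length-element combinatorics of \cite{HS,HuSS}, that justifies conjugating to minimal representatives and makes the induction on length well founded. Working directly with the PBW basis $x^{\alpha}c_Iw$, as you propose, the straightening of $x$-letters past $s_i$'s produces terms that trade Coxeter length against $x$-degree, and you have not specified an order in which these error terms are genuinely smaller. Likewise, you assert that ``parity constraints'' will cut the decorated data down to $\widetilde{\CP_n}$, but the precise conditions $\lambda_i+c(i)\equiv 1\pmod 2$ together with $\mu$ an odd partition \eqref{label basis} emerge only from the explicit Clifford-conjugation computations (Steps 4 and 5 in the proof of Lemma \ref{span}); these are exactly the point where ${\rm Tr}$ and ${\rm SupTr}$ diverge, so ``tracking signs carefully'' cannot be left as an announced intention --- it is the proof.
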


The proof of Theorem \ref{cocenter} uses similar techniques in \cite{HS,HuSS} when dealing with even generators and  techniques in \cite{WW} when dealing with Clifford generators. The linearly independence follows from the semisimple representation theory of generic cyclotomic Sergeev algebras developed in \cite{SW} by the second author of this paper and Wan. Combining with Theorem \ref{dengerate}, we recover Ruff's result on the rank of ${\rm Z}(\mhcn^{g})_{\bar{0}}$(Corollary \ref{center}) when $d$ is odd.  We also obtain a generating set for the degree zero part of the supercocenter ${\rm SupTr}(\mhcn^{g})_{\bar{0}}$, which gives rise to an upper bound of dimension for the center of $\mhcn^{g}$ when $d$ is even (Proposition \ref{upperbound}). We propose a conjecture on the rank of ${\rm SupTr}(\mhcn^{g})_{\bar{0}}$ for arbitrary $d$ and prove this conjecture when $d=1$ (Theorem \ref{supercocenter}).

The content of the paper is organised as follows. In Section 2, we shall introduce some basics on (super)symmetrizing superalgebra and cyclotomic Sergeev algebra, including basis Theorem, Mackey decomposition which will be used in later sections. We also compute the dimension of ${\rm Z}(\mhcn^{g})_{\bar{0}}, \,{\rm Tr}(\mhcn^{g})_{\bar{0}},\,{\rm SupTr}(\mhcn^{g})_{\bar{0}}$ for generic cyclotomic Sergeev algebra $\mhcn^{g}$. In Section 3, we use the Frobenious form in \cite{K2} to prove our main result Theorem \ref{dengerate} by induction on $n$. We also give an explicit formula of the (super)symmetrzing form on certain basis. In Section 4, we first recall some main results proved in \cite{HS} on the minimal length elements in the conjugacy classes of the complex reflection group $W_{d,n}$ in Subsection 4.1. Then we give a new presentation of $\mhcn^{g}$ in Subsection 4.2. Using this new presentation, we derive a basis of ${\rm Tr}(\mhcn^{g})_{\overline{0}}$ and recover Ruff's result on the rank of ${\rm Z}(\mhcn^{g})_{\bar{0}}$  \cite[Theorem 5.61]{Ru} when $d$ is odd in Subsection 4.3. In Section 5, we follow a similar computation to obtain a generating set of ${\rm Sup Tr}(\mhcn^{g})_{\overline{0}}$ which exactly gives a basis when $d=1$. As a result, we derive an upper bound on the dimension of the center when $d$ is even.
\bigskip

\centerline{\bf Acknowledgements}
\bigskip
The research is supported by the Natural Science Foundation of Beijing Municipality (No. 1232017).  Both authors thank Jun Hu for his helpful suggestion and feedback.
\bigskip

\section{Preliminary}
\subsection{Some basics on superalgebra}\label{basics}
Recall that ${\rm R}$ is an integral domain of characteristic different from 2 and ${\rm A}={\rm A}_{\overline{0}}\oplus {\rm A}_{\overline{1}}$ an ${\rm R}$-superalgebra,
which is finitely generated projective as ${\rm R}$-module. $|\cdot|: {\rm A} \rightarrow \Bbb Z_{2}$ is the parity map. Let ${\rm Z}({\rm A})$ be the usual center of ${\rm A}$, ${\rm Tr}({\rm A})={\rm A}/[{\rm A}, {\rm A}]$ be the usual cocenter of ${\rm A}$.
Define the supercenter and supercocenter of ${\rm A}$ respectively as follows.
$${\rm SupZ}({\rm A}):={\text  {\rm R-span}}\{x \in {\rm A} \mid xy=(-1)^{|x| |y|}yx, \forall y\in {\rm A} \},$$
$${\rm SupTr}({\rm A}):={\rm A}/[{\rm A}, {\rm A}]^{-},$$
where $[{\rm A}, {\rm A}]^{-}$ is the ${\rm R}$-span of all supercommutators $[x, y]^{-}:=xy-(-1)^{|x| |y|}yx,$ $x, y \in {\rm A}.$
Notice that $\left({\rm SupZ}({\rm A})\right)_{\overline{0}}= {\rm Z}({\rm A})_{\overline{0}}.$

Then we have
\begin{proposition}\label{sym}
(i) If the superalgebra ${\rm A}$ is symmetric, then there is a $({\rm Z}({\rm A}), {\rm Z}({\rm A}))$-supermodule isomorphism
$${\rm Z}({\rm A}) \cong {\rm Hom}_{{\rm R}}({\rm Tr}({\rm A}), {\rm R}), \quad a \mapsto t(-\cdot a);$$

(ii) If the superalgebra ${\rm A}$ is supersymmetric, then there is a $({\rm SupZ}({\rm A}), {\rm SupZ}({\rm A}))$-supermodule isomorphism
$${\rm SupZ}({\rm A}) \cong {\rm Hom}_{{\rm R}}({\rm SupTr}({\rm A}), {\rm R}), \quad z \mapsto t(-\cdot z).$$
\end{proposition}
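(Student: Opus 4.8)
The plan is to verify directly that the symmetrizing (resp. supersymmetrizing) form $t$ induces the claimed isomorphism after restricting to the center (resp. supercenter). I will treat part (i) in detail; part (ii) is entirely parallel with the bilinear form $t(xy)$ replaced by the graded-symmetric one. First I would recall that by Definition \ref{symmetric}, the map $\hat t\colon {\rm A}\to{\rm Hom}_{\rm R}({\rm A},{\rm R})$, $a\mapsto t(-\cdot a)$, is an isomorphism of $({\rm A},{\rm A})$-superbimodules, where the right-hand side carries its natural superbimodule structure $((x\cdot f\cdot y)(a)=f(yax)$, up to the usual Koszul signs). The key observation is that an element $a\in{\rm A}$ lies in ${\rm Z}({\rm A})$ exactly when $\hat t(a)$ is a morphism of left (equivalently right) ${\rm A}$-modules into ${\rm R}$, because $t(xy)=t(yx)$ forces $t(-\cdot a)(xb)=t(xba)=t(bax)=t(-\cdot a)(b)$ evaluated against the left action precisely when $a$ is central; more cleanly, $\hat t$ being a superbimodule map sends the submodule of "symmetric" elements bijectively onto the submodule of bimodule-balanced functionals.

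Concretely, here is the chain of identifications I would spell out. Since $\hat t$ is a bimodule isomorphism, it restricts to an isomorphism from $\{a\in{\rm A}\mid xa=ax\ \forall x\}={\rm Z}({\rm A})$ onto $\{f\in{\rm Hom}_{\rm R}({\rm A},{\rm R})\mid f(xb)=f(bx)\ \forall x,b\}$: indeed $xa=ax$ for all $x$ translates under $\hat t$, using its bimodule property, into $x\cdot\hat t(a)=\hat t(a)\cdot x$ for all $x$, and a functional $f$ satisfies $x\cdot f=f\cdot x$ for all $x$ iff $f([{\rm A},{\rm A}])=0$, i.e. iff $f$ factors through ${\rm Tr}({\rm A})={\rm A}/[{\rm A},{\rm A}]$. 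Thus $\hat t$ gives an ${\rm R}$-linear isomorphism ${\rm Z}({\rm A})\xrightarrow{\ \sim\ }{\rm Hom}_{\rm R}({\rm Tr}({\rm A}),{\rm R})$, $a\mapsto t(-\cdot a)$, where on the right the functional is understood as descended to the quotient (this is well defined precisely because $t(xy)=t(yx)$ makes $t(-\cdot a)$ vanish on commutators when $a$ is central). Finally I would check this isomorphism is $({\rm Z}({\rm A}),{\rm Z}({\rm A}))$-equivariant: the ${\rm Z}({\rm A})$-bimodule structure on ${\rm Hom}_{\rm R}({\rm Tr}({\rm A}),{\rm R})$ comes from the residual ${\rm A}$-bimodule action, and since $\hat t$ was already an ${\rm A}$-superbimodule map, restricting the action to ${\rm Z}({\rm A})$ on both sides preserves equivariance; one only notes that the left and right ${\rm A}$-actions on ${\rm Tr}({\rm A})$ and hence on its dual agree when restricted to ${\rm Z}({\rm A})$, so the bimodule structure is the expected one.

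For part (ii), I would repeat the argument with $[{\rm A},{\rm A}]$ replaced by $[{\rm A},{\rm A}]^{-}$ and with the graded-symmetry $t(xy)=(-1)^{|x||y|}t(yx)$; the condition $xy=(-1)^{|x||y|}yx$ defining ${\rm SupZ}({\rm A})$ translates under $\hat t$ into the condition that the functional kills all supercommutators, i.e. factors through ${\rm SupTr}({\rm A})={\rm A}/[{\rm A},{\rm A}]^{-}$, yielding ${\rm SupZ}({\rm A})\cong{\rm Hom}_{\rm R}({\rm SupTr}({\rm A}),{\rm R})$ as $({\rm SupZ}({\rm A}),{\rm SupZ}({\rm A}))$-supermodules. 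The only point demanding care throughout is bookkeeping of Koszul signs in the superbimodule structure on ${\rm Hom}_{\rm R}({\rm A},{\rm R})$ and in the identity $x\cdot\hat t(a)=\hat t(a)\cdot x\Leftrightarrow \hat t(xa\mp ax)=0$; this sign-chasing, rather than any conceptual difficulty, is the main obstacle, and since $2$ is invertible in ${\rm R}$ one may freely split into homogeneous components so that all signs are $\pm 1$. Once the signs are arranged correctly the statement follows formally from $\hat t$ being a superbimodule isomorphism.
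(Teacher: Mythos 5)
Your proposal is correct and follows essentially the same route as the paper: both arguments use the bijectivity of $\hat{t}$ together with the (super)trace identity to show that $a$ lies in ${\rm Z}({\rm A})$ (resp.\ ${\rm SupZ}({\rm A})$) if and only if $t(-\cdot a)$ vanishes on $[{\rm A},{\rm A}]$ (resp.\ $[{\rm A},{\rm A}]^{-}$), i.e.\ factors through the (super)cocenter. The paper simply carries out the Koszul-sign computation explicitly on homogeneous elements (the step you defer as ``bookkeeping''), whereas you phrase the same argument through the superbimodule structure on ${\rm Hom}_{\rm R}({\rm A},{\rm R})$; the content is identical.
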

\begin{proof}We only prove (ii) since the proof of (i) is similar. By Definition \ref{symmetric}, there is an ${\rm R}$-linear map $t:{\rm A} \rightarrow {\rm R}$ such that \begin{equation}\label{isomorphism}\hat{t}: {\rm A}\rightarrow {\rm Hom}_{{\rm R}}({\rm A}, {\rm R}), \quad a \mapsto t(-\cdot a)
	\end{equation}
	is an $({\rm A}, {\rm A})-$superbimodule isomorphism.
 For each homogeneous $z\in {\rm SupZ}({\rm A})$ and homogeneous $x, y\in {\rm A},$ we have
$$
\begin{aligned}
t(xyz) &= (-1)^{|z||y|}t(xzy),\nonumber \\
       &= (-1)^{|z||y|+|y||xz|}t(y xz)\nonumber \\
       &= (-1)^{|z||y|+|y||x|+|y||z|}t(y xz)\nonumber\\
       &= (-1)^{|x||y|} t(y xz),
\end{aligned}
$$ where in the first equation, we have used the definition of $ {\rm SupZ}({\rm A})$ and in the second equation, we have used the definition of supersymmetrizing form . We deduce that $t(-\cdot z) \in {\rm Hom}_{{\rm R}}({\rm SupTr}({\rm A}), {\rm R})$ by the above displayed equation.

Conversely, if $t(-\cdot z)\in {\rm Hom}_{{\rm R}}({\rm SupTr}({\rm A}), {\rm R})\subset {\rm Hom}_{{\rm R}}({\rm A}, {\rm R})$ for some homogeneous $z \in {\rm A},$ then
$$(-1)^{|x||y|+|x||z|}t(yzx)=(-1)^{|x||yz|}t(yzx)=t(xyz)=(-1)^{|x||y|}t(yxz)$$
for any homogeneous $x, y \in {\rm A},$ where we have used the definition of supersymmetrizing form in the second equation and the definition of  $t(-\cdot z)\in {\rm Hom}_{{\rm R}}({\rm SupTr}({\rm A}), {\rm R})$ in the last equation. We deduce that $$t\biggl(y\bigl((-1)^{|x||z|}zx-zx\bigr)\biggr)=0
$$ for any homogeneous $x, y \in {\rm A}.$ It follows from \eqref{isomorphism} that $xz=(-1)^{|x||z|}zx$, for any homogeneous $x \in {\rm A}$, i.e., $z \in {\rm SupZ}({\rm A}).$
\end{proof}

 The following is the super anologue of \cite[Proposition 2.1 (c)]{SVV}.

\begin{proposition}\label{base change}
Suppose ${\rm R'}$ is another commutative domain with a ring homomorphism ${\rm R} \rightarrow {\rm R'}$. We have $${\rm Tr}({\rm R'}\otimes_{\rm R}{\rm A})\cong {\rm R'}\otimes_{\rm R}{\rm Tr}({\rm A}),\qquad {\rm SupTr}({\rm R'}\otimes_{\rm R}{\rm A})\cong {\rm R'}\otimes_{\rm R}{\rm SupTr}({\rm A}).
$$
	\end{proposition}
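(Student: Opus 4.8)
The plan is to realize both $\mathrm{Tr}$ and $\mathrm{SupTr}$ as cokernels of explicit ${\rm R}$-linear maps and then invoke the right-exactness of the base-change functor $\mathrm{R'}\otimes_{\rm R}-$.

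For the ordinary cocenter, note that $[{\rm A},{\rm A}]\subseteq{\rm A}$ is by definition the image of the ${\rm R}$-linear map $\mu\colon {\rm A}\otimes_{\rm R}{\rm A}\to{\rm A}$, $x\otimes y\mapsto xy-yx$, so there is an exact sequence of ${\rm R}$-modules ${\rm A}\otimes_{\rm R}{\rm A}\xrightarrow{\mu}{\rm A}\to{\rm Tr}({\rm A})\to 0$. Applying the right-exact functor $\mathrm{R'}\otimes_{\rm R}-$ gives an exact sequence $\mathrm{R'}\otimes_{\rm R}({\rm A}\otimes_{\rm R}{\rm A})\to \mathrm{R'}\otimes_{\rm R}{\rm A}\to \mathrm{R'}\otimes_{\rm R}{\rm Tr}({\rm A})\to 0$. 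Precomposing the first arrow with the canonical isomorphism $({\rm A}')\otimes_{\mathrm{R'}}({\rm A}')\xrightarrow{\ \sim\ }\mathrm{R'}\otimes_{\rm R}({\rm A}\otimes_{\rm R}{\rm A})$, where ${\rm A}':=\mathrm{R'}\otimes_{\rm R}{\rm A}$, one checks on generators that the composite is exactly the commutator map $\mu'$ of the ${\rm R'}$-superalgebra ${\rm A}'$, namely $(r'\otimes x)\otimes(r''\otimes y)\mapsto r'r''\otimes(xy-yx)$. Hence $\mathrm{R'}\otimes_{\rm R}{\rm Tr}({\rm A})\cong\coker(\mu')={\rm A}'/[{\rm A}',{\rm A}']={\rm Tr}({\rm A}')$, which is the first claimed isomorphism. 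It is natural and respects the $\ZZ_2$-grading, since $\mu$ is a morphism of $\ZZ_2$-graded ${\rm R}$-modules and $\mathrm{R'}\otimes_{\rm R}-$ commutes with the parity decomposition.

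The supercocenter is handled identically, the only extra point being the definition of the relevant map. Using $A=A_{\overline 0}\oplus A_{\overline 1}$ one has ${\rm A}\otimes_{\rm R}{\rm A}=\bigoplus_{i,j\in\ZZ_2}{\rm A}_i\otimes_{\rm R}{\rm A}_j$ (with ${\rm A}_{\overline 0},{\rm A}_{\overline 1}$ finitely generated projective over ${\rm R}$, being direct summands of ${\rm A}$); define $\mu^-\colon {\rm A}\otimes_{\rm R}{\rm A}\to{\rm A}$ on the summand ${\rm A}_i\otimes_{\rm R}{\rm A}_j$ by $x\otimes y\mapsto xy-(-1)^{ij}yx$, so that $[{\rm A},{\rm A}]^-=\im(\mu^-)$ and ${\rm SupTr}({\rm A})=\coker(\mu^-)$. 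The same right-exactness argument, together with the fact that the canonical isomorphism $({\rm A}')\otimes_{\mathrm{R'}}({\rm A}')\cong\mathrm{R'}\otimes_{\rm R}({\rm A}\otimes_{\rm R}{\rm A})$ carries $(\mu')^-$ to $\mathrm{id}_{\mathrm{R'}}\otimes\mu^-$ (checked on homogeneous generators, using $({\rm A}')_i=\mathrm{R'}\otimes_{\rm R}{\rm A}_i$), yields $\mathrm{R'}\otimes_{\rm R}{\rm SupTr}({\rm A})\cong{\rm SupTr}(\mathrm{R'}\otimes_{\rm R}{\rm A})$.

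The argument is essentially formal, so there is no real obstacle; the only points deserving attention are the bookkeeping of the super sign in the definition of $\mu^-$ and the verification that the base-change isomorphism intertwines the two (super)commutator maps, each of which reduces to a one-line computation on homogeneous generators. In fact only right-exactness of the tensor product is used, so the statement holds for any homomorphism of commutative rings, the integral-domain hypothesis playing no role here.
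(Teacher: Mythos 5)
Your proof is correct and uses essentially the same idea as the paper, namely that $\mathrm{R'}\otimes_{\rm R}-$ is right exact applied to a presentation of the (super)cocenter. The paper phrases it as a diagram chase between the exact sequences $\mathrm{R'}\otimes_{\rm R}[{\rm A},{\rm A}]^-\to \mathrm{R'}\otimes_{\rm R}{\rm A}\to \mathrm{R'}\otimes_{\rm R}{\rm SupTr}({\rm A})\to 0$ and $[{\rm A}',{\rm A}']^-\to{\rm A}'\to{\rm SupTr}({\rm A}')\to 0$ (needing surjectivity of the leftmost vertical map), whereas you present the (super)cocenter as $\coker$ of a map out of ${\rm A}\otimes_{\rm R}{\rm A}$, which makes the base-change compatibility automatic and also makes transparent your correct observation that the integral-domain hypothesis is not actually used.
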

	
	\begin{proof} We prove the second isomorphism. The proof of the first isomorphism is similar. First, we have the following diagram with two vertical natural maps $\rho_1,\rho_2$ being surjective and two rows being exact.
		\begin{equation*}
			\begin{CD}
			{\rm R'}\otimes_{\rm R}{[{\rm A}, {\rm A}]^{-}} @>\phi_1>> 	{\rm R'}\otimes_{\rm R}{\rm A} @>\pi_1>> {\rm R'}\otimes_{\rm R}{\rm SupTr}({\rm A})@> >>0\\
			@V \rho_1 V V	@V \id V V @VV \rho_2  V\\
		[{\rm R'}\otimes_{\rm R}{\rm A}, {\rm R'}\otimes_{\rm R}{\rm A}]^{-}@>\phi_2>> 	{\rm R'}\otimes_{\rm R}{\rm A}  @>\pi_2>>   {\rm SupTr}({\rm R'}\otimes_{\rm R}{\rm A})@> >>0
			\end{CD}\quad
		\end{equation*}
		By chasing the diagrm, we deduce that $\rho_2$ is an isomorphism.
		\end{proof}

Throughout this paper, $F$ is an algebracially closed field with ${\text{Char}} F\neq 2$. Supppose $V$ is a superspace over $F$, we use $(\dim V_{\bar{0}}, \dim V_{\bar{1}})$ to denote its superdimension. Let $\mathcal{A}$ be a finite dimensional algebra over $F$. A superalgebra analog of Schur's Lemma (cf. \cite{K2}) states that the endomorphism
algebra $\text{End}_{\mathcal{A}}(M)$ of a finite dimensional irreducible module $\mathcal{A}$-module $M$  is either one dimensional or two dimensional. In the
former case, we call the module $M$ of {\em type }\texttt{M} while in
the latter case the module $M$ is called of {\em type }\texttt{Q}.

\begin{example}\label{examples}
	1). Let $V$ be a superspace with superdimension $(m,n)$ over field $F$, then $\mathcal{M}_{m,n}:={\text{End}}_{F}(V)$ is a simple superalgebra with the unique simple module $V$ of {\em type }\texttt{M}. One can check that $${\text{dim}}_F  {\rm Tr}(\mathcal{M}_{m,n})_{\bar{0}} = {\text{dim}}_F {\rm Z}(\mathcal{M}_{m,n})_{\bar{0}}={\text{dim}}_F {\rm SupTr}(\mathcal{M}_{m,n})_{\bar{0}}=1.$$\\
	2). Let $V$ be a superspace with superdimension $(n,n)$ over field $F$. We define
$\mathcal{Q}_n:=\Biggl\{\left(\begin{matrix}
        &A &B \\
		&-B &A
		\end{matrix}\right) \biggm| A,B\in  M_n\Biggr\}\subset \mathcal{M}_{n,n}$. Then $\mathcal{Q}_n$ is a simple superalgebra with the unique simple module $V$ of {\em type }\texttt{Q}. One can check that $${\text{dim}}_F  {\rm Tr}(\mathcal{Q}_n)_{\bar{0}} = {\text{dim}}_F {\rm Z}(\mathcal{Q}_n)_{\bar{0}}=1,\,\,{\text{dim}}_F {\rm SupTr}(\mathcal{Q}_n)_{\bar{0}}=0.$$
\end{example}

Let $\mathcal{J}(\mathcal{A})$  be the usual (non-super) Jacobson radical of $\mathcal{A}$. We call $\mathcal{A}$ is semisimple if $\mathcal{J}(\mathcal{A})=0$.

\begin{lemma}\cite[Lemma 12.2.9]{K1}\label{semisimple}
		A finite dimensional superalgebra $\mathcal{A}$ is semisimple if and only if it is a direct sum of some simple superalgebras. Moreover, any finite dimensional simple superalgebra is isomorphic to some $\mathcal{M}_{m,n}$ or $\mathcal{Q}_n$.
\end{lemma}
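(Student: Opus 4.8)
The assertion is the super-analogue of the Wedderburn–Artin theorem, and the cleanest reference is \cite[Lemma 12.2.9]{K1}; I sketch the argument here in case a self-contained proof is preferred. The plan is to reduce to the radical-free case and then run a classical Wedderburn decomposition equivariantly for the parity automorphism. First I would observe that the parity automorphism $\sigma\colon\mathcal A\to\mathcal A$, acting as $(-1)^i$ on $\mathcal A_{\bar i}$, is an $F$-algebra automorphism from which (since $\operatorname{Char}F\neq2$) the grading is recovered via the $\pm1$-eigenspace splitting $a=\tfrac12(a+\sigma a)+\tfrac12(a-\sigma a)$. Since the Jacobson radical is invariant under algebra automorphisms, $\mathcal J(\mathcal A)$ is $\sigma$-stable, hence a \emph{graded} ideal, so $\mathcal A/\mathcal J(\mathcal A)$ is again a finite-dimensional superalgebra; replacing $\mathcal A$ by it we may assume $\mathcal J(\mathcal A)=0$. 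By classical Wedderburn over the algebraically closed field $F$, $\mathcal A=\bigoplus_{k=1}^{r}B_k$ with $B_k\cong M_{n_k}(F)$ the minimal two-sided ideals.

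Next I would exploit that $\sigma$ permutes $\{B_1,\dots,B_r\}$ and $\sigma^2=\id$, so it induces an involution on this set; grouping the $B_k$ into $\sigma$-orbits writes $\mathcal A$ as a direct sum of $\sigma$-stable sub-superalgebras, one per orbit, and it remains to identify each summand. For an orbit of size $1$, say $\mathcal S=B$ with $\tau=\sigma|_B$ an automorphism of $M_n(F)$ with $\tau^2=\id$: Skolem–Noether gives $\tau=\operatorname{Ad}(u)$ with $u\in\GL_n(F)$ and $u^2\in F^\times I$, so after a square-root rescaling (here $F=\bar F$ is essential) $u^2=I$; the eigenspace decomposition $F^n=V_+\oplus V_-$ with $\dim V_+=m$ identifies $\mathcal S_{\bar0}$ with the centralizer of $u$ (block-diagonal matrices) and $\mathcal S_{\bar1}$ with the block-off-diagonal matrices, i.e.\ $\mathcal S\cong\End_F(V)=\mathcal M_{m,n-m}$. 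For an orbit of size $2$, say $\mathcal S=B_1\oplus B_2$ with $\phi=\sigma|_{B_1}\colon B_1\xrightarrow{\ \sim\ }B_2$ (so $n_1=n_2=:n$ and $\sigma|_{B_2}=\phi^{-1}$): then $\mathcal S_{\bar0}=\{(a,\phi(a))\}$ and $\mathcal S_{\bar1}=\{(a,-\phi(a))\}$, and comparing with the ungraded form $\mathcal Q_n\cong M_n(F)\otimes_F F[t]/(t^2+1)\cong M_n(F)\times M_n(F)$ (again using $F=\bar F$), whose even/odd parts are the ``diagonal'' $\{(c,c)\}$ and $\{(c,-c)\}$, the map $(a,b)\mapsto(a,\phi^{-1}(b))$ is a superalgebra isomorphism $\mathcal S\cong\mathcal Q_n$. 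Each orbit-summand is simple as a superalgebra: a graded two-sided ideal is in particular an ungraded one, and in the size-$2$ case the two minimal two-sided ideals of $M_n\times M_n$ are interchanged by $\sigma$, hence not graded, leaving only $0$ and $\mathcal S$, while in the size-$1$ case $M_n(F)$ has no proper nonzero two-sided ideal.

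For the converse, if $\mathcal A=\bigoplus_i\mathcal S_i$ with each $\mathcal S_i$ a simple superalgebra, then (by the classification just obtained, or by direct inspection) the ungraded algebra underlying each $\mathcal S_i$ is $M_{m_i+n_i}(F)$ or $M_{n_i}(F)\times M_{n_i}(F)$, both semisimple, so $\mathcal J(\mathcal A)=\bigoplus_i\mathcal J(\mathcal S_i)=0$ and $\mathcal A$ is semisimple; that $\mathcal M_{m,n}$ and $\mathcal Q_n$ are themselves simple superalgebras is already recorded in Example~\ref{examples}. The main obstacle in this argument is the size-$1$ orbit: pinning down $\sigma|_B$ via Skolem–Noether and normalizing $u^2=I$ by a square root (the only place algebraic closedness does essential work), together with the bookkeeping that shows the classical simple blocks inside a size-$2$ orbit fail to be graded; once these are in place the rest is routine linear algebra.
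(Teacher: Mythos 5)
The paper does not actually prove this lemma: it cites it verbatim from Kleshchev (the citation \cite[Lemma 12.2.9]{K1} is surely a slip for \cite{K2}, since the journal paper \cite{K1} has no such numbered lemma, whereas the book has a Chapter 12 on superalgebras). So there is no in-paper proof to compare against; I can only assess your argument on its own merits, and it is essentially correct. Your route is the standard ``equivariant Wedderburn'' reduction: pass to the parity automorphism $\sigma$, note that $\mathcal J(\mathcal A)$ is $\sigma$-stable hence a graded ideal, apply the ungraded Wedderburn--Artin decomposition $\mathcal A=\bigoplus_kB_k$ with $B_k\cong M_{n_k}(F)$, and then sort the $B_k$ into $\sigma$-orbits of size $1$ or $2$. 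The size-$1$ analysis via Skolem--Noether (with the rescaling $u^2=I$ that uses $F=\bar F$) correctly produces $\mathcal M_{m,n-m}$, and the size-$2$ analysis correctly identifies $\mathcal S_{\bar 0}=\{(a,\phi(a))\}$, $\mathcal S_{\bar 1}=\{(a,-\phi(a))\}$ with $\mathcal Q_n\cong M_n(F)\otimes F[t]/(t^2+1)$. The simplicity check for each orbit-summand is also right: a graded two-sided ideal is in particular an ungraded one, and in the size-$2$ case the two minimal ungraded ideals $B_1,B_2$ are swapped by $\sigma$, hence not graded.

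Two small points of hygiene, neither fatal. First, the phrase ``replacing $\mathcal A$ by $\mathcal A/\mathcal J(\mathcal A)$ we may assume $\mathcal J(\mathcal A)=0$'' is misplaced logic for the forward implication: if $\mathcal A$ is \emph{given} semisimple you already have $\mathcal J(\mathcal A)=0$, and there is nothing to mod out; and for the converse you must show $\mathcal J(\mathcal A)=0$, not assume it. Second, the statement of the lemma asserts that \emph{every} finite-dimensional simple superalgebra is some $\mathcal M_{m,n}$ or $\mathcal Q_n$, whereas your decomposition a priori only classifies those simple superalgebras occurring inside a semisimple one. To close this you need the one-line observation that a simple superalgebra $\mathcal S$ automatically has $\mathcal J(\mathcal S)=0$ (since $\mathcal J(\mathcal S)$ is $\sigma$-stable hence a graded ideal, and being nilpotent it cannot be all of $\mathcal S$, so it is $0$); you gesture at this with ``or by direct inspection'' in the converse, but the classification clause needs it explicitly. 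With these cosmetic fixes the argument is complete and, as far as I can tell, it is the same Wedderburn-style argument as in Kleshchev's book, which is presumably why the authors chose to cite rather than reprove it.
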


\begin{corollary}\label{semisimple1}
Suppose	$\mathcal{A}$ is semisimple over field $F$, then ${\text{dim}}_F  {\rm Tr}(\mathcal{A})_{\bar{0}}= {\text{dim}}_F {\rm Z}(\mathcal{A})_{\bar{0}}$ is the number of simple modules of $\mathcal{A}$ and ${\text{dim}}_F {\rm SupTr}(\mathcal{A}) $ is the number of simple modules of $\mathcal{A}$ of {\em type }\texttt{M}.
	\end{corollary}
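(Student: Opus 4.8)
The plan is to reduce the statement to a direct count over the simple summands of $\mathcal{A}$ using Lemma \ref{semisimple}, after which everything follows from the computations recorded in Example \ref{examples}. First I would write $\mathcal{A} \cong \bigoplus_{i=1}^{r} \mathcal{B}_i$ as a direct sum of simple superalgebras, where by Lemma \ref{semisimple} each $\mathcal{B}_i$ is isomorphic either to some $\mathcal{M}_{m_i,n_i}$ or to some $\mathcal{Q}_{n_i}$. The key observation is that all four of the functors/constructions $\mathcal{A}\mapsto {\rm Tr}(\mathcal{A})_{\bar 0}$, $\mathcal{A}\mapsto {\rm Z}(\mathcal{A})_{\bar 0}$, $\mathcal{A}\mapsto {\rm SupTr}(\mathcal{A})_{\bar 0}$, and $\mathcal{A}\mapsto \#\{\text{simple modules}\}$ are additive over direct sums of superalgebras: a two-sided ideal decomposition $\mathcal{A}=\bigoplus \mathcal{B}_i$ gives $[\mathcal{A},\mathcal{A}]^{\pm}=\bigoplus [\mathcal{B}_i,\mathcal{B}_i]^{\pm}$ (since cross terms $\mathcal{B}_i\mathcal{B}_j$ vanish for $i\neq j$) and ${\rm Z}(\mathcal{A})=\bigoplus {\rm Z}(\mathcal{B}_i)$, and the simple $\mathcal{A}$-modules are precisely the simple $\mathcal{B}_i$-modules inflated along the projections.

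Next I would invoke the per-summand data from Example \ref{examples}: for $\mathcal{B}_i\cong\mathcal{M}_{m_i,n_i}$ one has $\dim_F {\rm Tr}(\mathcal{B}_i)_{\bar 0}=\dim_F {\rm Z}(\mathcal{B}_i)_{\bar 0}=\dim_F {\rm SupTr}(\mathcal{B}_i)_{\bar 0}=1$, and $\mathcal{B}_i$ has exactly one simple module, which is of type \texttt{M}; for $\mathcal{B}_i\cong\mathcal{Q}_{n_i}$ one has $\dim_F {\rm Tr}(\mathcal{B}_i)_{\bar 0}=\dim_F {\rm Z}(\mathcal{B}_i)_{\bar 0}=1$ while $\dim_F {\rm SupTr}(\mathcal{B}_i)_{\bar 0}=0$, and $\mathcal{B}_i$ has exactly one simple module, of type \texttt{Q}. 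Summing over $i$: $\dim_F {\rm Tr}(\mathcal{A})_{\bar 0}=\dim_F {\rm Z}(\mathcal{A})_{\bar 0}=r$, which equals the total number of simple $\mathcal{A}$-modules; and $\dim_F {\rm SupTr}(\mathcal{A})_{\bar 0}$ equals the number of indices $i$ with $\mathcal{B}_i$ of type $\mathcal{M}$, i.e.\ the number of simple modules of type \texttt{M}. (Note that for each simple summand the odd part of ${\rm SupTr}$ vanishes — one sees this already in Example \ref{examples} since $V$ has no odd automorphisms contributing to the trace in the $\mathcal{M}$ case and ${\rm SupTr}(\mathcal{Q}_n)=0$ outright — so writing ${\rm SupTr}(\mathcal{A})$ versus ${\rm SupTr}(\mathcal{A})_{\bar 0}$ makes no difference here.)

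There is essentially no serious obstacle; the only points requiring a line of justification are (a) that a decomposition into simple superalgebras is actually a decomposition of $\mathcal{A}$ as a direct sum of two-sided superideals, so that traces, centers, and supercommutator spaces split accordingly — this is immediate once one notes each $\mathcal{B}_i$ is a simple superalgebra hence its image in $\mathcal{A}$ is a minimal two-sided ideal — and (b) the bijection between simple $\mathcal{A}$-modules and the disjoint union of simple $\mathcal{B}_i$-modules, which is standard and unaffected by the super grading. The heart of the corollary is therefore entirely contained in Lemma \ref{semisimple} together with the explicit rank computations already carried out in Example \ref{examples}; the proof is just the bookkeeping of assembling these.
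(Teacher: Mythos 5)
Your proposal is correct and is essentially the paper's own argument: the paper's proof is exactly the reduction via Lemma \ref{semisimple} to a direct sum of simple superalgebras together with the dimension counts in Example \ref{examples}, and your additivity bookkeeping is just the routine detail the paper leaves implicit.
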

	\begin{proof}
		This follows from Example \ref{examples} and Lemma \ref{semisimple}.
		\end{proof}
\subsection{Cyclotomic Sergeev algebra}
Let ${\rm R}$ be an integral domain with $2$ invertible. For $n\in\mathbb{Z}_+$, the affine Sergeev (or degenerate Hecke-Clifford) algebra $\mhcn$ is
the ${\rm R}-$superalgebra generated by even generators
$s_1,\ldots,s_{n-1},x_1,\ldots,x_n$ and odd generators
$c_1,\ldots,c_n$ subject to the following relations
\begin{align}
s_i^2=1,\quad s_is_j =s_js_i, \quad
s_is_{i+1}s_i&=s_{i+1}s_is_{i+1}, \quad|i-j|>1,\label{braid}\\
x_ix_j&=x_jx_i, \quad 1\leq i,j\leq n, \label{poly}\\
c_i^2=1,c_ic_j&=-c_jc_i, \quad 1\leq i\neq j\leq n, \label{clifford}\\
s_ix_i&=x_{i+1}s_i-(1+c_ic_{i+1}),\label{px1}\\
s_ix_j&=x_js_i, \quad j\neq i, i+1, \label{px2}\\
s_ic_i=c_{i+1}s_i, s_ic_{i+1}&=c_is_i,s_ic_j=c_js_i,\quad j\neq i, i+1, \label{pc}\\
x_ic_i=-c_ix_i, x_ic_j&=c_jx_i,\quad 1\leq i\neq j\leq n.
\label{xc}
\end{align}

For $d \in \Bbb N,$ let $$g(x):=\sum_{\substack{0\leq t\leq \frac{d}{2}, \\ t\in \NN}}a_{d-2t}x^{d-2t}\in {\rm R}[x]$$ such that $a_d=1$. For convenience, we denote $a_{d-k}=0$ for any odd number $0\leq k\leq d$. The cyclotomic Sergeev algebra (or degenerate cyclotomic Hecke-Clifford algebra) $\mhcn^{g}$ is defined as $$\mhcn^{g}:=\mhcn/ I_g,
 $$ where $I_g$ is the two sided ideal of $\mhcn$ generated by $g(x_1)$. Denote $$[n]:=\{1,2,\cdots,n\}.$$
  For an (ordered) subset $I=(i_1<i_2<\cdots<i_k)\subset [n]$, we denote $c_I:=c_{i_1}c_{i_2}\cdots c_{i_k}$. By convention, $c_{\emptyset}:=1$. And for any $\alpha=(\alpha_1,\ldots,\alpha_n)\in\mathbb{Z}_{+}^n,$ we set
$x^{\alpha}:=x_1^{\alpha_1}\cdots x_n^{\alpha_n}.$
We first recall some basic facts on $\mhcn^{g}.$

\begin{lemma}\label{lem. of K.}(\cite[Theorem 15.4.6, Lemma 15.5.1]{K2})

(i) The following elements form an ${\rm R}$-basis for $\mhcn^{g},$
\begin{equation}\label{basis1}\{ x^{\alpha}c_I w \mid \alpha \in \Bbb Z_{+}^{n}, \alpha_{1}, \ldots ,\alpha_{n} < d, I\subset [n], w \in \Sym_n \};\end{equation}

(ii)  $\mathfrak{H}_{n+1}^{g}$ is a free right $\mhcn^{g}$-supermodule with basis
\begin{equation}\label{free}
\{ x_{j}^{a}c_{j}^{b}s_{j} \cdots s_{n} \mid 0 \leq a < d, b\in \Bbb Z_{2}, 1 \leq j \leq n+1 \};
\end{equation}

(iii) As $(\mathfrak{H}_{n}^{g}, \mathfrak{H}_{n}^{g})$-bisupermodules,
\begin{equation}\label{decomposition}
\mathfrak{H}_{n+1}^{g}= \bigoplus_{0 \leq a < d, b\in \Bbb Z_{2}}x_{n+1}^{a}c_{n+1}^{b}\mathfrak{H}_{n}^{g}\oplus \mathfrak{H}_{n}^{g}s_{n}\mathfrak{H}_{n}^{g} .
\end{equation}

\end{lemma}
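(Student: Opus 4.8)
The plan is to prove parts (i), (ii) and (iii) simultaneously by induction on $n$, since (ii) and (iii) really constitute the inductive step that also delivers the basis in (i). The starting point is Nazarov's PBW theorem for the affine Sergeev algebra $\mhcn$: the monomials $x^{\alpha}c_I w$ with $\alpha\in\mathbb{Z}_+^n$, $I\subseteq[n]$, $w\in\mathfrak{S}_n$ form an ${\rm R}$-basis of $\mhcn$. Granting this, part (i) amounts to two statements about $\mhcn^{g}=\mhcn/I_g$: that the images of those $x^{\alpha}c_I w$ with all $\alpha_i<d$ span $\mhcn^{g}$, and that they are ${\rm R}$-linearly independent.

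Spanning is a straightening argument. Given $x^{\alpha}c_I w$ with some $\alpha_i\ge d$, I want to lower the exponent modulo $I_g$. The defining relation $g(x_1)=0$ handles $\alpha_1$: it rewrites $x_1^{d}$ as an ${\rm R}$-combination of strictly lower (even-shifted) powers of $x_1$. To treat $\alpha_i$ for $i>1$, I conjugate by a suitable $w'\in\mathfrak{S}_n$ carrying $x_i$ to $x_1$: pushing $x_i^{d}$ through the $s_j$'s using \eqref{px1}, \eqref{px2}, \eqref{pc}, \eqref{xc} produces $x_1^{d}$ plus correction terms of strictly lower order in the $x$'s (including Clifford pieces $1+c_jc_{j+1}$), then applying $g$ and conjugating back lowers $\alpha_i$. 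An induction on a well-order on the monomials — lexicographic on $(\alpha_1,\dots,\alpha_n)$, refined by the length of $w$ — shows the procedure terminates, so the claimed set spans.

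For linear independence and parts (ii), (iii), I would construct the candidate module by hand. Let $M$ be the free right $\mhcn^{g}$-module on the formal symbols in \eqref{free}, so $\rank_{\mhcn^{g}}M=2d(n+1)$. Define a left action of the generators $s_1,\dots,s_n,x_1,\dots,x_{n+1},c_1,\dots,c_{n+1}$ of $\mathfrak{H}_{n+1}^{g}$ on $M$ by the coset-rewriting rules dictated by the relations (e.g. $s_i$ acting on the symbol $s_j\cdots s_n$ commutes past when $i<j-1$, shortens when $i=j-1$, etc., with the $x$- and $c$-corrections prescribed by \eqref{px1}–\eqref{xc}), and verify that all the defining relations \eqref{braid}–\eqref{xc} together with $g(x_1)=0$ hold on $M$; by the inductive hypothesis the needed identities reduce to computations inside $\mhcn^{g}$. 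Then $M$ is a left $\mathfrak{H}_{n+1}^{g}$-module, and $h\mapsto h\cdot(1\otimes 1)$ defines a surjection $\mathfrak{H}_{n+1}^{g}\twoheadrightarrow M$ of right $\mhcn^{g}$-modules; comparing with the spanning statement and the rank count $\rank_{{\rm R}}\mhcn^{g}=2^nd^nn!$ (inductive hypothesis) forces the spanning set to be an ${\rm R}$-basis, which is (i), and forces $\mathfrak{H}_{n+1}^{g}\cong M$, which is (ii). Part (iii) follows by splitting the basis \eqref{free} into the $j=n+1$ symbols, spanning $\bigoplus_{a,b}x_{n+1}^{a}c_{n+1}^{b}\mhcn^{g}$, and the $j\le n$ symbols, where $x_j^{a}c_j^{b}s_j\cdots s_{n-1}\in\mhcn^{g}$ so the residual span is $\mhcn^{g}s_n\mhcn^{g}$; directness of the sum is forced by the basis.

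The main obstacle is the bookkeeping in the two verification steps: proving that the straightening in the spanning argument genuinely terminates (choosing the well-order so that every correction term from \eqref{px1}, including the Clifford pieces and the cyclotomic substitution for $x_1^{d}$, is strictly smaller), and checking that the prescribed generator actions on $M$ satisfy every relation — in particular the braid relation among the $s_i$ and the mixed relation \eqref{px1}, where the permutations and Clifford generators interact most intricately. Both are routine in principle but delicate in execution; everything else, including the base case $n=0$, is immediate.
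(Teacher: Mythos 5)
The paper does not prove this lemma; it is quoted from Kleshchev's book \cite[Theorem~15.4.6, Lemma~15.5.1]{K2}. Your proposal --- straightening against $g(x_1)=0$ and its $\mathfrak{S}_n$-conjugates for spanning, constructing the free right $\mhcn^{g}$-module $M$ on the symbols in \eqref{free} and verifying the defining relations hold on it to get linear independence via the rank count over the domain ${\rm R}$, then reading off (iii) by splitting the $j=n+1$ and $j\le n$ symbols --- is essentially the standard argument of that reference (and of Brundan--Kleshchev's original paper), with (i), (ii), (iii) run simultaneously by induction on $n$. The two points you flag as delicate, well-foundedness of the straightening order and the relation check on $M$, are precisely where the reference spends its effort, but the outline is correct.
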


By above $\eqref{free},$ we have (\cite[Proof of Lemma 15.6.2]{K2})

\begin{corollary}\label{decomposition2} For any $y\in \mathfrak{H}_{n+1}^{g}$, we can write $y$ uniquely as
  \begin{equation*}
y=\sum_{a=0}^{d-1} \left( x_{n+1}^{a}\sigma_{a}+x_{n+1}^{a}c_{n+1}\tau_{a} \right) \\
     +\sum_{a=0}^{d-1}\sum_{j=1}^{n}\left( x_{j}^a s_{j}\cdots s_{n}\mu_{a,j} + x_{j}^a c_{j}s_{j}\cdots s_{n}\nu_{a,j} \right)
\end{equation*} for some $\sigma_{a}, \tau_{a}, \mu_{a,j}, \nu_{a,j}\in \mathfrak{H}_{n}^{g}.$
\end{corollary}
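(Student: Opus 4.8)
\textbf{Proof proposal for Corollary \ref{decomposition2}.}

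The plan is to deduce this uniqueness statement directly from the right-module basis in Lemma \ref{lem. of K.}(ii), namely \eqref{free}. The point is that $\mathfrak{H}_{n+1}^{g}$ is a \emph{free} right $\mathfrak{H}_n^{g}$-supermodule with the explicit basis $\{x_j^a c_j^b s_j\cdots s_n \mid 0\le a<d,\ b\in\mathbb{Z}_2,\ 1\le j\le n+1\}$, where for $j=n+1$ the element $s_j\cdots s_n$ is read as the empty product $1$. So every $y\in\mathfrak{H}_{n+1}^{g}$ has a unique expansion $y=\sum_{j,a,b}(x_j^a c_j^b s_j\cdots s_n)\,\xi_{a,b,j}$ with coefficients $\xi_{a,b,j}\in\mathfrak{H}_n^{g}$. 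First I would separate the $j=n+1$ terms from the $j\le n$ terms: the $j=n+1$, $b=0$ coefficients give $\sigma_a$, the $j=n+1$, $b=1$ coefficients give $\tau_a$, the $j\le n$, $b=0$ coefficients give $\mu_{a,j}$, and the $j\le n$, $b=1$ coefficients give $\nu_{a,j}$. This is just a relabelling of the basis expansion, so both existence and uniqueness of $\sigma_a,\tau_a,\mu_{a,j},\nu_{a,j}$ are immediate.

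The only genuinely substantive point is a bookkeeping one: the basis in \eqref{free} is written with the monomial $x_j^a$ and the Clifford element $c_j^b$ to the \emph{left} of $s_j\cdots s_n$, and the coefficients multiplied on the \emph{right}, which is exactly the shape appearing in the statement, so no commutation of $x_j$ or $c_j$ past $s_j\cdots s_n$ is needed. One should just note that $c_j^0=1$ and $c_j^1=c_j$, and that the two displayed sums over $j=n+1$ (the $\sigma_a$ and $\tau_a$ terms) together account for the $2d$ basis elements with $j=n+1$, while the double sum over $1\le j\le n$ accounts for the remaining $2dn$ basis elements. Hence the $2d(n+1)$ coefficients $\{\sigma_a,\tau_a\}_{0\le a<d}\cup\{\mu_{a,j},\nu_{a,j}\}_{0\le a<d,\,1\le j\le n}$ are in bijective correspondence with the coordinates of $y$ in the basis \eqref{free}.

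There is essentially no obstacle here; the corollary is a direct restatement of Lemma \ref{lem. of K.}(ii). If anything, the only care required is the degenerate convention for $j=n+1$ (empty product) and the observation that the right $\mathfrak{H}_n^{g}$-module structure respects the $\mathbb{Z}_2$-grading, so homogeneous $y$ yields homogeneous coefficients — but for the bare uniqueness claim over $\mathfrak{H}_n^{g}$ that refinement is not even needed. I would therefore write the proof in one or two sentences, citing \eqref{free} and matching coefficients.
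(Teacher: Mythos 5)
Your proposal is correct and is exactly the paper's route: the corollary is stated as an immediate consequence of the free right $\mathfrak{H}_n^{g}$-module basis \eqref{free} from Lemma \ref{lem. of K.}(ii), with the $j=n+1$ basis elements (empty product convention) giving $\sigma_a,\tau_a$ and the $j\le n$ ones giving $\mu_{a,j},\nu_{a,j}$. The paper offers no further argument beyond citing \eqref{free}, so your coefficient-relabelling write-up is precisely what is intended.
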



\begin{lemma}\label{Formulae}(\cite[(14.8), (14.9), Lemma 15.6.1]{K2})

(i) For $ 1 \leq i < n$ and $a \geq 1,$ we have

\begin{align*}
s_{i}x_{i}^{a} &= x_{i+1}^{a}s_{i}-\sum_{k=0}^{a-1}(x_{i}^{k}x_{i+1}^{a-1-k}+(-x_{i})^{k}x_{i+1}^{a-1-k}c_{i}c_{i+1}),\nonumber \\
s_{i}x_{i+1}^{a} &= x_{i}^{a}s_{i}+\sum_{k=0}^{a-1}(x_{i}^{k}x_{i+1}^{a-1-k}-x_{i}^{k}(-x_{i+1})^{a-1-k}c_{i}c_{i+1});\nonumber
\end{align*}

(ii) For $1 \leq i \leq n,$ and $a \geq 0,$ we have
$$
\begin{aligned}
s_{n}\cdots s_{i} x_{i}^{a} s_{i}\cdots s_{n} &\in x_{n+1}^{a} +\mathfrak{H}_{n}^{g}s_{n}\mathfrak{H}_{n}^{g} \oplus \bigoplus_{0 \leq k \leq a-2, b\in \Bbb Z_{2}}x_{n+1}^{k}c_{n+1}^{b}\mathfrak{H}_{n}^{g}, \nonumber \\
s_{n}\cdots s_{i} x_{i}^{a}c_{i} s_{i}\cdots s_{n} &\in x_{n+1}^{a}c_{n+1} + \mathfrak{H}_{n}^{g}s_{n}\mathfrak{H}_{n}^{g} \oplus \bigoplus_{0 \leq k \leq a-2, b\in \Bbb Z_{2}}x_{n+1}^{k}c_{n+1}^{b}\mathfrak{H}_{n}^{g}.\nonumber
\end{aligned}
$$

\end{lemma}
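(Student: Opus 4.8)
The final statement to prove is Lemma~\ref{Formulae}, which records two families of commutation-type formulae in $\mhcn^{g}$: part (i) expresses $s_ix_i^a$ and $s_ix_{i+1}^a$ as $x_{i+1}^a s_i$ (resp. $x_i^a s_i$) plus explicit lower-degree correction terms, and part (ii) asserts that the conjugated element $s_n\cdots s_i\, x_i^a\, s_i\cdots s_n$ lies in $x_{n+1}^a$ modulo the "smaller" summands $\mathfrak{H}_n^g s_n \mathfrak{H}_n^g$ and $\bigoplus_{0\le k\le a-2,\,b\in\ZZ_2} x_{n+1}^k c_{n+1}^b \mathfrak{H}_n^g$ from the Mackey-type decomposition \eqref{decomposition}, with the analogous statement for the Clifford-twisted version. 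Since this is quoted verbatim from \cite[(14.8), (14.9), Lemma 15.6.1]{K2}, the honest plan is to reprove it from the defining relations \eqref{braid}--\eqref{xc} and the basis results of Lemma~\ref{lem. of K.}.

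For part (i), the plan is a straightforward induction on $a$. The base case $a=1$ is exactly the defining relation \eqref{px1} rewritten as $s_ix_i = x_{i+1}s_i - (1 + c_ic_{i+1})$, which matches the claimed formula since the sum over $k$ has the single term $k=0$ giving $x_i^0 x_{i+1}^0 + (-x_i)^0 x_{i+1}^0 c_ic_{i+1} = 1 + c_ic_{i+1}$. For the inductive step I would write $s_i x_i^{a} = (s_i x_i) x_i^{a-1}$, substitute \eqref{px1}, obtaining $x_{i+1} s_i x_i^{a-1} - (1+c_ic_{i+1}) x_i^{a-1}$, then apply the inductive hypothesis to $s_i x_i^{a-1}$, and finally push the resulting $x_{i+1}$ past the $x$-powers (they commute by \eqref{poly}) and past $c_ic_{i+1}$, using \eqref{xc} to pick up the sign $x_{i+1} c_i c_{i+1} = c_i c_{i+1} x_{i+1}$ (no sign here since $x_{i+1}$ commutes with both $c_i$ and $c_{i+1}$ — wait, one must be careful: $x_{i+1}$ anticommutes with $c_{i+1}$). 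The bookkeeping of these Clifford signs, producing the $(-x_i)^k$ alternation, is the only delicate point; reindexing the sum then yields the stated closed form. The second formula for $s_i x_{i+1}^a$ is entirely parallel, starting from \eqref{px1} solved for $s_i x_{i+1}$, or alternatively derived from the first by exploiting $s_i c_i = c_{i+1} s_i$ type relations.

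For part (ii), I would induct on $n-i$ (the number of $s$'s on each side), or peel off one $s_n$ at a time. Consider $y := s_n\cdots s_i\, x_i^a\, s_i\cdots s_n$. By the inductive hypothesis applied inside, $s_{n-1}\cdots s_i\, x_i^a\, s_i\cdots s_{n-1} \in x_n^a + \mathfrak{H}_{n-1}^g s_{n-1}\mathfrak{H}_{n-1}^g \oplus \bigoplus_{0\le k\le a-2,\,b} x_n^k c_n^b \mathfrak{H}_{n-1}^g$ inside $\mathfrak{H}_n^g$, so $y \equiv s_n x_n^a s_n$ modulo $s_n\bigl(\mathfrak{H}_{n-1}^g s_{n-1}\mathfrak{H}_{n-1}^g\bigr)s_n$ and $s_n\bigl(\bigoplus x_n^k c_n^b \mathfrak{H}_{n-1}^g\bigr)s_n$. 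For the leading term, part (i) gives $s_n x_n^a = x_{n+1}^a s_n + (\text{terms in }\mathfrak{H}_n^g c_n^b x_n^{<a})$, hence $s_n x_n^a s_n = x_{n+1}^a + x_{n+1}^a(s_n^2-1) + \cdots$; using $s_n^2=1$ the first piece is exactly $x_{n+1}^a$, and each correction term has the shape $x_n^k c_n^b (\cdots) s_n$ with $k\le a-1$, which upon moving $x_n^k c_n^b$ to the right across $s_n$ (via \eqref{px1}/\eqref{pc}, lowering further) lands in $\mathfrak{H}_n^g s_n \mathfrak{H}_n^g$ plus $\bigoplus_{k\le a-2} x_{n+1}^k c_{n+1}^b \mathfrak{H}_n^g$ — here one checks the degree actually drops to $\le a-2$ because the $k=a-1$ term in part (i) carries no $x_{n+1}$, so after one more application of (i) it sits in degree $\le a-2$ in $x_{n+1}$ or in the $s_n$-part. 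The remaining pieces $s_n \mathfrak{H}_{n-1}^g s_{n-1}\mathfrak{H}_{n-1}^g s_n$ and $s_n x_n^k c_n^b \mathfrak{H}_{n-1}^g s_n$ must be shown to lie in $\mathfrak{H}_n^g s_n \mathfrak{H}_n^g \oplus \bigoplus_{k\le a-2} x_{n+1}^k c_{n+1}^b \mathfrak{H}_n^g$; the first is immediate (it is of the form $\mathfrak{H}_n^g s_n \cdot(\ldots)$, but more precisely one notes $s_n \mathfrak{H}_{n-1}^g = \mathfrak{H}_{n-1}^g s_n$ so it lies in $\mathfrak{H}_n^g s_n \mathfrak{H}_n^g$), and the second is handled by the same "move $x_n^k c_n^b$ past $s_n$" step, again landing in the allowed subspaces with $x_{n+1}$-degree $\le k \le a-2$. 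The Clifford-twisted statement follows by the same argument carrying an extra $c_i$, which travels as $c_i \leadsto c_{i+1} \leadsto \cdots \leadsto c_{n+1}$ under the $s_j$'s via \eqref{pc}.

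The main obstacle is bookkeeping rather than conceptual: one must track (a) the Clifford signs that produce the alternating $(-x)^k$ pattern in part (i), and (b) in part (ii), the precise claim that the correction terms drop to $x_{n+1}$-degree $\le a-2$ (not merely $\le a-1$) — this relies on the fact that the "top" correction term in part (i), the $k=a-1$ summand $x_i^{a-1}+(-x_i)^{a-1}c_ic_{i+1}$, contains no factor of $x_{i+1}$ at all, so when it is later pushed through $s_n$ by a second use of part (i) its $x_{n+1}$-degree is controlled by the new power, which is $\le a-2$. Keeping the decomposition \eqref{decomposition} and Corollary~\ref{decomposition2} in hand to normalize everything into the standard form is essential. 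Since the statement is classical (\cite{K2}), I will present these inductions in a streamlined way, suppressing the most routine sign-chasing.
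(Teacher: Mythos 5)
The paper itself does not prove this lemma—it is imported verbatim from \cite[(14.8), (14.9), Lemma 15.6.1]{K2}—and your reproof follows essentially the same standard route as Kleshchev's: part (i) by induction on $a$ starting from relation \eqref{px1}, and part (ii) by peeling off one $s_n$ at a time and using a second application of (i) to push the residual $x_{n+1}$-powers back across $s_n$, so the correction terms fall into $\mathfrak{H}_{n}^{g}s_{n}\mathfrak{H}_{n}^{g}\oplus\bigoplus_{0\le k\le a-2,\,b}x_{n+1}^{k}c_{n+1}^{b}\mathfrak{H}_{n}^{g}$. Your plan is correct; the only slips are cosmetic bookkeeping ones (in (i) the $(-x_i)^k$ alternation arises from commuting $c_i$ past $x_i^{a-1}$ in the leftover term $(1+c_ic_{i+1})x_i^{a-1}$, not from moving $x_{i+1}$; and in (ii) the effective move is to send the factor $x_{n+1}^{a-1-k}c_{n+1}^{b}$ rightward across $s_n$ via $x_{n+1}^{m}s_n=s_nx_n^{m}+(\text{terms of $x_{n+1}$-degree}\le m-1)$, which uniformly handles all correction terms, not just the $k=a-1$ one), and these do not affect the validity of the argument.
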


{\bf  From now on, we shall fix $d\geq 0$ and  let $m:=\lfloor \frac{d}{2}\rfloor$.  Furthermore, we fix $$g(x)=\sum_{\substack{0\leq t\leq \frac{d}{2}, \\ t\in \NN}}a_{d-2t}x^{d-2t}\in{\rm R}[x]$$ such that $a_d=1.$}
 In the rest of this subsection, we shall recall semisimple representation theory of generic cyclotomic Sergeev superalgebra $\mhcn^{h}$.
 To this end, we need some combinatorics. For $n\in \NN$, let $\mathscr{P}_n$ be the set of partitions of $n$ and denote by $\ell(\mu)$ the number of nonzero parts in the partition $\mu$ for each $\mu\in\mathscr{P}_n$. For $a\in \NN$, we use $\mathscr{P}^m_a$ to denote the set of all $m$-partitions of $a$. Let $\mathscr{P}^\mathsf{s}_a$ be the set of strict partition of $a$.
We define
\begin{equation}\label{simple modules}
\mathscr{P}^{\mathsf{0},m}_n:=\mathscr{P}^{m}_n,\qquad
\mathscr{P}^{\mathsf{s},m}_n:=
\sqcup_{a=0}^{n} \mathscr{P}^\mathsf{s}_a\times \mathscr{P}^{m}_{n-a}.\end{equation}
In convention, for any $\undla\in  \mathscr{P}^{\mathsf{0},m}_{n}$, we write $\undla=(\lambda^{(1)},\cdots,\lambda^{(m)})$ while for any $\undla\in  \mathscr{P}^{\mathsf{s},m}_{n}$, we write $\undla=(\lambda^{(0)},\lambda^{(1)},\cdots,\lambda^{(m)})$, i.e., we shall put the strict partition in the $0$-th component.
Then we define
\begin{equation}\label{simple modules of type M}
\mathscr{MP}^{\bullet,m}_{n}:=\begin{cases}
\mathscr{P}^{\mathsf{0},m}_{n}, &\text{if $\bullet=\mathsf{0};$} \\
\biggl\{(\mu,\underline{\lambda})\biggm|\begin{matrix}\mu  {\text { is a strict partition with even length}};\\
	\underline{\lambda} {\text{ is a $m$-partition}}\end{matrix}\biggr\}\subset \mathscr{P}^{\mathsf{s},m}_n, &\text{if $\bullet=\mathsf{s}$}.
\end{cases}
\end{equation}


Let ${\rm K}$ be the algebraic closure of the fraction filed of $\ZZ[\frac{1}{2}][Q_1,\cdots,Q_{m}]$. Set $Q_0=1$, we use the following cyclotomic polynomial $$h(x):=\sum_{\substack{0\leq t\leq \frac{d}{2}, \\ t\in \NN}}Q_{t}x^{d-2t}$$ to define the generic cyclotomic Sergeev algebra $\mhcn^{h}$  over ${\rm K}$  (or $\ZZ[\frac{1}{2}][Q_1,\cdots,Q_{m}]$).
\begin{theorem}\cite{SW}\label{rep}
The	generic cyclotomic Sergeev superalgebra $\mhcn^{h}$ is semisimple over ${\rm K}.$ If $d=2m$ is even, then the number of its simple modules is $|\mathscr{P}^{\mathsf{0},m}_n|$ and the number of its simple modules of {\em type }\texttt{M} is equal to $|\mathscr{MP}^{\mathsf{0}, m}_n|.$ If $d=2m+1$ is odd, then the number of its simple modules is $|\mathscr{P}^{\mathsf{s},m}_n|$ and the number of its simple modules of {\em type }\texttt{M} is equal to $|\mathscr{MP}^{\mathsf{s},m}_n|.$
	\end{theorem}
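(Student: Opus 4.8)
The statement to prove is Theorem \ref{rep}, which asserts that the generic cyclotomic Sergeev superalgebra $\mhcn^{h}$ is semisimple over ${\rm K}$ and counts its simple modules and its simple modules of type \texttt{M}, separately in the even and odd level cases. The plan is to compare dimensions: by the basis theorem (Lemma \ref{lem. of K.}(i)), $\dim_{{\rm K}}\mhcn^{h}=2^{n}n!\,d^{n}$, so it suffices to exhibit enough irreducible modules (parametrised by the claimed combinatorial sets) whose endomorphism-corrected dimensions add up correctly, and invoke Wedderburn theory for superalgebras (Lemma \ref{semisimple}).

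First I would set up the induction on $n$ using the tower $\mhcn^{h}\subset\mathfrak{H}_{n+1}^{h}$ together with the Mackey-style decomposition \eqref{decomposition}, which makes $\mathfrak{H}_{n+1}^{h}$ a free $\mhcn^{h}$-bimodule of an explicit rank; this is the engine that converts branching into the recursion for the combinatorial sets $\mathscr{P}^{\mathsf{0},m}_{n}$ and $\mathscr{P}^{\mathsf{s},m}_{n}$ defined in \eqref{simple modules}. Concretely, for a simple $\mhcn^{h}$-module $M$, analyze $\mathfrak{H}_{n+1}^{h}\otimes_{\mhcn^{h}}M$ by letting $x_{n+1}$ act: over ${\rm K}$ the polynomial constraint coming from $h(x_{n+1})$ (together with the Clifford relation $x_{n+1}c_{n+1}=-c_{n+1}x_{n+1}$, which forces eigenvalues of $x_{n+1}$ to come in pairs $\pm a$ or to be $0$) splits the module into generalized eigenspaces, and genericity of $Q_{1},\dots,Q_{m}$ guarantees that all these eigenvalues are distinct and nonzero (when $d$ is even) or that $0$ is an eigenvalue of controlled multiplicity (when $d$ is odd). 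The residue datum one accumulates along the branching is exactly an $m$-tuple of partitions, with an extra strict partition recording the "$x=0$" string in the odd case — this is where the asymmetry between $\mathsf{0}$ and $\mathsf{s}$ enters, and it is the same mechanism that in the non-cyclotomic setting produces the $\mathcal{RS}_{n}^{-}$-type combinatorics for the Sergeev algebra itself. Tracking whether a branching step is "type \texttt{M} preserving" or introduces a Clifford degree of freedom yields the count of type \texttt{M} simples as $|\mathscr{MP}^{\bullet,m}_{n}|$ from \eqref{simple modules of type M}.

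The bookkeeping step is then to verify the dimension identity: grouping the matrix superalgebras $\mathcal{M}_{p,q}$ (for type \texttt{M} simples) and $\mathcal{Q}_{r}$ (for type \texttt{Q} simples) as in Lemma \ref{semisimple} and Example \ref{examples}, one checks that $\sum (\dim V)^{2}$ over type \texttt{M} simples plus $\sum 2r^{2}$ (equivalently $\tfrac12(\dim V)^{2}$ counted with the $2$-dimensional endomorphism ring) over type \texttt{Q} simples equals $2^{n}n!\,d^{n}$; semisimplicity follows because we have produced a semisimple quotient of full dimension, hence the Jacobson radical vanishes. An alternative, cleaner route is to use base change (Proposition \ref{base change}) and a specialization/genericity argument: exhibit one specialization of the $Q_{i}$ over which $\mhcn^{h}$ becomes a product of (super) matrix algebras by an explicit Young-seminormal-form construction, then argue that semisimplicity is an open condition so it holds over the generic point ${\rm K}$, and that the number of simples (resp. type \texttt{M} simples) can only jump up under specialization, while the combinatorial upper bound from the branching recursion matches. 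I expect the main obstacle to be the careful treatment of the Clifford generators $c_{i}$ in the branching: showing that at each step the induced module decomposes into precisely the predicted pieces and correctly identifying the type (\texttt{M} versus \texttt{Q}) of each new simple — this requires the superalgebra Schur's lemma together with an explicit parity analysis of the action of $c_{n+1}$ on the eigenspace decomposition, and the "$0$-eigenvalue" string in the odd case needs a genuinely different (Clifford-module) argument than the $\pm a$ strings. The counting of type \texttt{M} simples, i.e. the constraint "$\mu$ has even length", is exactly the shadow of this parity analysis and is the most delicate part of the statement.
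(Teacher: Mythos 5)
The paper does not prove this statement at all: Theorem \ref{rep} is imported verbatim from \cite{SW} (Shi--Wan, arXiv:2501.06763), so there is no in-paper argument to compare yours against --- the citation \emph{is} the proof. Your outline is a reasonable guess at the strategy one expects in \cite{SW} (branching along the tower via the Mackey decomposition \eqref{decomposition}, eigenvalue analysis of $x_{n+1}$ with the Clifford relation forcing $\pm$-paired eigenvalues, genericity of $Q_1,\dots,Q_m$ separating them, and a super-Wedderburn/dimension count or a Tits-style deformation argument), and nothing in it is structurally wrong.

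However, as a proof it has genuine gaps: every step you flag as ``delicate'' is precisely the content of the cited theorem and is left unestablished. Concretely: (1) the dimension identity $\sum(\dim V)^2+\sum\tfrac12(\dim V)^2=2^n n!\,d^n$ cannot be ``checked'' without an explicit construction of the simple modules and their dimensions (which in the Hecke--Clifford setting carry powers of $2$ governed by the number of odd-size parts of the strict-partition component), and you do not construct them; (2) the determination of type \texttt{M} versus type \texttt{Q}, i.e.\ exactly the even-length condition defining $\mathscr{MP}^{\mathsf{s},m}_n$ in \eqref{simple modules of type M}, is asserted to be ``the shadow of a parity analysis'' but no such analysis is carried out, and this is the most nontrivial numerical claim in the statement; (3) the alternative specialization route relies on ``the number of simples can only jump up under specialization'' together with an unspecified seminormal-form specialization, neither of which is justified (the usual decomposition-map inequality goes the other way, and a Tits-type argument would require exhibiting a concrete split semisimple specialization with the claimed counts, which again presupposes the construction). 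So the proposal is an accurate roadmap of what \cite{SW} must do, but it does not itself prove the theorem.
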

	
\begin{corollary}\label{semisimple2}
	 We have $${\text{dim}}_{\rm K}  {\rm Tr}(\mhcn^{h})_{\bar{0}}= {\text{dim}}_{\rm K}  {\rm Z}(\mhcn^{h})_{\bar{0}}=\begin{cases}|\mathscr{P}^{\mathsf{0},m}_n|, &\text{if $d=2m$ is even},\\
	 |\mathscr{P}^{\mathsf{s},m}_n|, &\text{if $d=2m+1$ is odd.}
	 	\end{cases}$$ and $${\text{dim}}_{\rm K}  {\rm SupTr}(\mhcn^{h})=\begin{cases}|\mathscr{MP}^{\mathsf{0},m}_n|, &\text{if $d=2m$ is even,}\\
	 	|\mathscr{MP}^{\mathsf{s},m}_n|, &\text{if $d=2m+1$ is odd.}
	 	\end{cases}$$
	\end{corollary}
	\begin{proof}
		This follows from Corollary \ref{semisimple1} and Theorem \ref{rep}.
	\end{proof}
\section{(Super)symmetrizing form on $\mhcn^{g}$}

In this section, we shall prove Theorem \ref{dengerate}. We recall the Frobenius form on $\mathfrak{H}_{n}^{g}$ given in \cite{K2} first. Rewrite the decomposition of $\mathfrak{H}_{n+1}^{g}$ in \eqref{decomposition} as
$$\mathfrak{H}_{n+1}^{g}=x_{n+1}^{d-1}\mathfrak{H}_{n}^{g} \oplus \bigoplus_{a=0}^{d-2}x_{n+1}^{a}\mathfrak{H}_{n}^{g}
                  \oplus \bigoplus_{a=0}^{d-1}x_{n+1}^{a}c_{n+1}\mathfrak{H}_{n}^{g} \oplus \mathfrak{H}_{n}^{g}s_{n}\mathfrak{H}_{n}^{g}.$$

Let $\theta_{n+1}: \mathfrak{H}_{n+1}^{g} \rightarrow \mathfrak{H}_{n}^{g}$ be the projection on to the $\mathfrak{H}_{n}^{g}$-coefficient of the first summand of this decomposition, that is, if $$y=\sum_{a=0}^{d-1} \left( x_{n+1}^{a}\sigma_{a}+x_{n+1}^{a}c_{n+1}\tau_{a} \right)+h_1s_nh_2
 \in \mathfrak{H}_{n+1}^{g},$$ where $\sigma_{a}, \tau_{a}, h_1, h_2\in \mathfrak{H}_{n}^{g}$, then $\theta_{n+1}(y):=\sigma_{d-1}$. We have the following.

\begin{lemma}(\cite[Lemma 15.6.2]{K2})
The $(\mathfrak{H}_{n}^{g}, \mathfrak{H}_{n}^{g})$-bisupermodule homomorphism $\theta_{n+1}$ is non-degenerate, i.e., $\ker \theta_{n+1}$ contains no
non-zero left ideals of $\mathfrak{H}_{n+1}^{g}.$
\end{lemma}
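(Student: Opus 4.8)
The statement to prove is that the bisupermodule homomorphism $\theta_{n+1}\colon \mathfrak{H}_{n+1}^{g}\to\mathfrak{H}_n^g$ is non-degenerate, meaning $\ker\theta_{n+1}$ contains no nonzero left ideal of $\mathfrak{H}_{n+1}^g$.

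\medskip

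\textbf{Plan of proof.} The plan is to argue by contradiction: suppose $L$ is a nonzero left ideal of $\mathfrak{H}_{n+1}^g$ contained in $\ker\theta_{n+1}$, pick $0\neq y\in L$, and produce an element $zy\in L$ (with $z\in\mathfrak{H}_{n+1}^g$) whose image under $\theta_{n+1}$ is nonzero, contradicting $L\subseteq\ker\theta_{n+1}$. The key is to have enough control of the multiplication to ``extract'' the top term. First I would write $y$ in the normal form supplied by Corollary~\ref{decomposition2}: there are $\sigma_a,\tau_a\in\mathfrak{H}_n^g$ ($0\le a\le d-1$) and $\mu_{a,j},\nu_{a,j}\in\mathfrak{H}_n^g$ ($0\le a\le d-1$, $1\le j\le n$) with
\[
y=\sum_{a=0}^{d-1}\bigl(x_{n+1}^a\sigma_a+x_{n+1}^ac_{n+1}\tau_a\bigr)+\sum_{a=0}^{d-1}\sum_{j=1}^{n}\bigl(x_j^a s_j\cdots s_n\mu_{a,j}+x_j^a c_j s_j\cdots s_n\nu_{a,j}\bigr),
\]
and by definition $\theta_{n+1}(y)=\sigma_{d-1}$, so our hypothesis says $\sigma_{d-1}=0$ for every element of $L$.

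\medskip

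\textbf{Key steps.} Step one: reduce to the case where some ``$x_{n+1}^a\sigma_a$''-type coefficient is nonzero. If $y\neq 0$ but all $\sigma_a=\tau_a=0$, then $y$ lies in $\bigoplus_{j=1}^n \mathfrak{H}_n^g s_j\cdots s_n\,(\text{stuff})$; by Lemma~\ref{lem. of K.}(ii) the elements $x_j^a c_j^b s_j\cdots s_n$ form a right $\mathfrak{H}_n^g$-basis of $\mathfrak{H}_{n+1}^g$, and one sees from the relations that left-multiplying $s_j\cdots s_n$ by a suitable product $s_{j-1}\cdots s_1$ (or conjugating an appropriate way) moves the ``$s$-tail'' so that eventually it produces an $x_{n+1}$-power term; more cleanly, I would use the $(\mathfrak{H}_n^g,\mathfrak{H}_n^g)$-bimodule decomposition \eqref{decomposition} together with the observation that $\mathfrak{H}_n^g s_n\mathfrak{H}_n^g$ can be pushed into the $x_{n+1}^a\mathfrak{H}_n^g$-summands by multiplying by $x_{n+1}$ on the left: indeed $x_{n+1}\cdot h s_n h'$ contributes to the $x_{n+1}^a c_{n+1}^b\mathfrak{H}_n^g$ part after applying \eqref{px1}-type relations. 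So after left-multiplying $y$ by a suitable element we may assume $\sigma_a\neq 0$ or $\tau_a\neq 0$ for some $a$. Step two: having some nonzero $\sigma_a$ or $\tau_a$, I left-multiply by $c_{n+1}$ if necessary to convert a $\tau$-term into a $\sigma$-term (using $c_{n+1}^2=1$ and $c_{n+1}x_{n+1}=-x_{n+1}c_{n+1}$, this swaps the two families up to signs), so we may assume some $\sigma_a\neq 0$ with $a$ chosen maximal. Step three: left-multiply by $x_{n+1}^{\,d-1-a}$. Using the cyclotomic relation coming from $g(x_1)=0$ transported to $x_{n+1}$ (all $x_i$ satisfy the same monic degree-$d$ relation modulo lower terms in $\mathfrak{H}_{n+1}^g$, by Lemma~\ref{lem. of K.}(i) since $\alpha_i<d$), the product $x_{n+1}^{d-1-a}\cdot x_{n+1}^a\sigma_a = x_{n+1}^{d-1}\sigma_a + (\text{lower }x_{n+1}\text{-degree terms})$, so $\theta_{n+1}(x_{n+1}^{d-1-a}y) = \sigma_a + (\text{contributions from the other summands})$. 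Step four: check that the ``other summands'' of $y$ do not cancel $\sigma_a$ in the top coefficient. The $x_{n+1}^bc_{n+1}\tau_b$ terms, when multiplied by $x_{n+1}^{d-1-a}$, stay in the $c_{n+1}$-family and never contribute to $\sigma_{d-1}$; the $x_j^a c_j^b s_j\cdots s_n(\cdots)$ terms, when multiplied on the left by $x_{n+1}^{d-1-a}$, still lie in $\mathfrak{H}_n^g s_n\mathfrak{H}_n^g$ because $x_{n+1}$ commutes past $s_j\cdots s_{n-1}$ and $x_{n+1}s_n\in \mathfrak{H}_n^g s_n\mathfrak{H}_n^g + \mathfrak{H}_n^g$ by \eqref{px1}, and a careful bookkeeping with Lemma~\ref{Formulae}(ii) shows these never hit the $x_{n+1}^{d-1}\mathfrak{H}_n^g$ summand. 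Hence $\theta_{n+1}(x_{n+1}^{d-1-a}y)=\sigma_a\neq 0$, and since $x_{n+1}^{d-1-a}y\in L$, this contradicts $L\subseteq\ker\theta_{n+1}$.

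\medskip

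\textbf{Main obstacle.} The delicate point is Step one together with the cross-term analysis in Step four: I need to be sure that after left-multiplication the ``$s_n$-part'' and the ``$c_{n+1}$-part'' genuinely stay out of the distinguished summand $x_{n+1}^{d-1}\mathfrak{H}_n^g$, and that pushing an $s_n$-term up into an $x_{n+1}$-power term (to handle the case where $y$ has no $x_{n+1}$-power component at all) is always possible and controllable. Both rely on the explicit commutation formulas of Lemma~\ref{Formulae} and the freeness statements of Lemma~\ref{lem. of K.}, so the work is bookkeeping rather than a new idea, but it is the place where a sign error or a missed term would break the argument. An alternative, possibly cleaner route is to avoid the case analysis by proving directly that the pairing $(u,v)\mapsto\theta_{n+1}(uv)$ restricted to $\mathfrak{H}_{n+1}^g\times\mathfrak{H}_{n+1}^g$ is nondegenerate on the right, using the right $\mathfrak{H}_n^g$-basis \eqref{free} and induction on $n$ with the base case $n=0$ (where $\mathfrak{H}_1^g=\mathrm{R}[x_1]/(g)\otimes \mathrm{Cl}_1$ and $\theta_1$ is visibly the coefficient of $x_1^{d-1}$, which pairs the monomial basis perfectly); I would fall back on this inductive formulation if the direct contradiction argument gets unwieldy.
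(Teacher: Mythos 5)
The paper itself does not prove this lemma; it is quoted with a citation to \cite[Lemma 15.6.2]{K2}, so I will evaluate your sketch on its own merits.

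Steps 2--4 of your plan, handling the case where some $\sigma_a$ or $\tau_a$ is nonzero, are correct in outline: choose $a_0$ maximal with $\sigma_{a_0}\neq 0$ (or $\tau_{a_0}\neq 0$), left-multiply by $x_{n+1}^{d-1-a_0}$ (or $c_{n+1}x_{n+1}^{d-1-a_0}$), and the cross-terms from the $\mathfrak{H}_n^g s_n\mathfrak{H}_n^g$-part of $y$ stay away from $x_{n+1}^{d-1}\mathfrak{H}_n^g$ because $x_{n+1}^k s_n \in s_n x_n^k + \bigoplus_{l<k} x_{n+1}^l c_{n+1}^{\bullet}\mathfrak{H}_n^g$ by Lemma~\ref{Formulae}(i). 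So the bookkeeping you worry about in Step~4 does work out.

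The genuine gap is Step~1, and you in fact identify it as your ``main obstacle'' without closing it. You reduce to $y\in\mathfrak{H}_n^g s_n\mathfrak{H}_n^g$ and claim that left-multiplying by $x_{n+1}$ produces a nonzero coefficient in one of the $x_{n+1}^a c_{n+1}^b\mathfrak{H}_n^g$-summands. But writing $y=\sum_i h_i s_n h_i'$, one finds
\[
x_{n+1}y=\sum_i h_i s_n x_n h_i' + \Bigl(\sum_i h_i h_i'\Bigr) + c_{n+1}\Bigl(\sum_i \pm\, h_i c_n h_i'\Bigr),
\]
and nothing you have said rules out that both peeled-off sums vanish while $y\neq 0$; $\mathfrak{H}_n^g$ is not a domain (e.g.\ $g(x)=x^d$ produces nilpotents), so $h_i h_i'=0$ and $h_i c_n h_i'=0$ are entirely possible. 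Iterating the multiplication by $x_{n+1}$ only ever moves the first term to $\sum h_is_n x_n^k h_i'$ plus further peeled-off pieces whose nonvanishing is equally unclear, and you give no termination argument. This is exactly where the content of the lemma lies: one must handle the $\mathfrak{H}_n^g s_n\mathfrak{H}_n^g$-part either by an induction on $n$ that genuinely invokes the nondegeneracy of $\theta_n$ --- e.g.\ via $\theta_{n+1}(s_n z\, u\, s_n\, v)=\iota_{n-1}\circ\theta_n(z u)\,v$ from Lemma~\ref{degeneratelem} and a canonical-form argument inside $\mathfrak{H}_n^g\otimes_{\mathfrak{H}_{n-1}^g}\mathfrak{H}_n^g$ --- or by isolating an extremal index $j$ in the right basis $\{x_j^a c_j^b s_j\cdots s_n\}$, left-multiplying by $s_n\cdots s_j$, and checking via Lemma~\ref{Formulae}(ii) that the other $j'$-terms cannot cancel the top coefficient. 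Your fallback (``prove nondegeneracy of the pairing by induction on $n$'') is the right instinct, but it too remains a sketch: as written, the proposal does not establish the lemma.
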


Hence we can define a Frobenius form on $\mathfrak{H}_{n}^{g}$ as follows:
$$t_{n, d}:=\theta_{1} \circ \theta_{2} \circ \cdots \circ \theta_{n}, \quad \mathfrak{H}_{n}^{g} \rightarrow{\rm R}.$$  We want to prove that the form $t_{n, d}$ is symmetric when level $d$ is odd and $t_{n, d}$ is super symmetric when level $d$ is even. To this end, we need the following Lemma.

We use $\iota_n: \mathfrak{H}_{n}^{g}\rightarrow \mathfrak{H}_{n+1}^{g}$ to denote the natural embedding.

\begin{lemma}\label{degeneratelem}
For $n \geq 1$ and any $x \in \mathfrak{H}_{n}^{g} \subseteq \mathfrak{H}_{n+1}^{g},$ we have $ \theta_{n+1}(s_{n}xs_{n})=\iota_{n-1}\circ\theta_{n}(x) $ in $\mathfrak{H}_{n}^{g}.$
\end{lemma}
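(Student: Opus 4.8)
The plan is to compute $\theta_{n+1}(s_n x s_n)$ by first reducing to the case where $x$ runs over a convenient spanning set, then tracking how $s_n(-)s_n$ interacts with the decomposition \eqref{decomposition} at level $n+1$. Since $\theta_{n+1}$ is ${\rm R}$-linear and $s_n(-)s_n$ is ${\rm R}$-linear, it suffices to prove the identity for $x$ ranging over the basis \eqref{basis1} of $\mathfrak{H}_n^g$, or more efficiently over elements of the form $x = y_1 s_i \cdots s_{n-1} y_2$ and $x = x_j^a c_j^b y$ with $y_1, y_2, y \in \mathfrak{H}_{n-1}^g$, using Corollary \ref{decomposition2} applied at level $n$ (so that $\mathfrak{H}_n^g = \bigoplus_{a,b} x_n^a c_n^b \mathfrak{H}_{n-1}^g \oplus \mathfrak{H}_{n-1}^g s_{n-1}\mathfrak{H}_{n-1}^g$). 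On the summand $\mathfrak{H}_{n-1}^g s_{n-1} \mathfrak{H}_{n-1}^g$, both sides vanish: $\theta_n$ kills this summand by definition, and $s_n (h_1 s_{n-1} h_2) s_n = (s_n h_1) s_{n-1}'(s_n h_2 s_n)$-type manipulations, combined with braid relations among $s_{n-1}, s_n$, land the result in $\mathfrak{H}_n^g s_n \mathfrak{H}_n^g$, which $\theta_{n+1}$ kills. So the content is concentrated on the summands $x = x_n^a c_n^b h$ with $0 \le a < d$, $b \in \ZZ_2$, $h \in \mathfrak{H}_{n-1}^g$.

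For such $x$, the key computation is to understand $s_n x_n^a c_n^b h s_n = (s_n x_n^a s_n)(s_n c_n^b s_n) h'$ where $h'$ accounts for moving $h \in \mathfrak{H}_{n-1}^g$ past $s_n$ (it commutes, since $\mathfrak{H}_{n-1}^g$ is generated by elements with indices $\le n-1$, all of which commute with $s_n$ by \eqref{px2}, \eqref{pc}, and the far-commutation of $s$'s in \eqref{braid}). Then $s_n c_n s_n = s_n c_n s_n$; using \eqref{pc} we have $s_n c_n = c_{n+1} s_n$, so $s_n c_n^b s_n = c_{n+1}^b$ when $b=1$ (and $=1$ when $b=0$) — wait, more carefully $s_n c_n s_n = c_{n+1} s_n s_n = c_{n+1}$. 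And by \eqref{px1} and Lemma \ref{Formulae}(i), $s_n x_n^a s_n$ expands as $x_{n+1}^a + (\text{lower } x_{n+1}\text{-degree terms and } c\text{-terms}) + (\text{terms in } \mathfrak{H}_n^g s_n \mathfrak{H}_n^g)$; in fact $s_n x_n^a s_n = x_{n+1}^a + (\text{stuff of } x_{n+1}\text{-degree} \le a-1) \bmod \mathfrak{H}_n^g s_n \mathfrak{H}_n^g$. The upshot is that $s_n x_n^a c_n^b h s_n$ has $x_{n+1}^{d-1}$-coefficient (the thing $\theta_{n+1}$ extracts) equal to the $x_{n+1}^{d-1}$-coefficient of $x_{n+1}^a c_{n+1}^b h$ if $b = 0, a = d-1$ — which matches: $\theta_n(x_n^a c_n^b h) = h$ precisely when $b=0$ and $a = d-1$, and is $0$ otherwise (by the very definition of $\theta_n$, which projects onto the $x_n^{d-1}\mathfrak{H}_{n-1}^g$ summand, ignoring $c_n$-terms). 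So I need to check both sides are zero unless $b = 0$, $a = d-1$, and equal $\iota_{n-1}(h)$ when they are. The $c$-parity bookkeeping (that $s_n x_n^a s_n$ contributes no $c_{n+1}$-free, $x_{n+1}^{d-1}$-degree term except the leading $x_{n+1}^a$) is the delicate point, but it follows from Lemma \ref{Formulae}(i) since all correction terms there have $x_{i+1}$-degree strictly less than $a$.

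The main obstacle I anticipate is the case $a = d-1$ combined with the reduction $x_{n+1}^d = -\sum_{t\ge 1} a_{d-2t} x_{n+1}^{d-2t}$ coming from the cyclotomic relation $g(x_{n+1}) = 0$: when expanding $s_n x_n^{d-1} s_n$ one genuinely produces an $x_{n+1}^d$ term (from the braid/skew relations when $a$ is large relative to $d$, via $s_n x_n^{d-1} = x_{n+1}^{d-1} s_n - \sum_{k=0}^{d-2}(\cdots)$ and then a second $s_n$), but since it sits in $x_{n+1}$-degree $d$ which after the cyclotomic reduction only produces $x_{n+1}$-degrees $\le d-2 < d-1$, it does not affect $\theta_{n+1}$. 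I should state this carefully so the reader sees the degree drop is genuine. A secondary subtlety: I must confirm that moving $h \in \mathfrak{H}_{n-1}^g$ through $s_n$ and past the $x_{n+1}, c_{n+1}$ factors does not alter which summand of \eqref{decomposition} the term lies in — this is immediate since $h$ already lies in $\mathfrak{H}_n^g$ and left-multiplication by $x_{n+1}^a c_{n+1}^b$ respects the direct sum decomposition as a right $\mathfrak{H}_n^g$-module. Once these points are settled, linearity finishes the proof; I would present it as: reduce to the spanning set, dispatch the $s_{n-1}$-summand to zero on both sides, and match the $x_n^a c_n^b h$ summand termwise.
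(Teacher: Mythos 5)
Your proposal is correct and follows essentially the same route as the paper: reduce by right $\mathfrak{H}_{n-1}^g$-linearity of $\theta_n,\theta_{n+1},\iota_{n-1}$ to the two kinds of terms (those of the form $x_n^a c_n^b h$ and those in $\mathfrak{H}_{n-1}^g s_{n-1}\mathfrak{H}_{n-1}^g$), show that conjugation by $s_n$ sends the $s_{n-1}$-summand into $\mathfrak{H}_n^g s_n \mathfrak{H}_n^g$ via the braid relation, and for the first kind use Lemma \ref{Formulae}(i) and $s_n c_n s_n = c_{n+1}$ to see that $s_n x_n^a c_n^b s_n = x_{n+1}^a c_{n+1}^b + (\text{lower $x_{n+1}$-degree and }s_n\text{-terms})$, so $\theta_{n+1}$ extracts $\delta_{(a,b),(d-1,0)}$ exactly as $\theta_n$ does. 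One small remark: the "main obstacle" you anticipate, an $x_{n+1}^d$ term arising from $s_n x_n^{d-1}s_n$ and needing the cyclotomic relation, does not actually occur here. Since $a\le d-1$, the leading term of $s_n x_n^a s_n$ is $x_{n+1}^a$ with $a\le d-1$, and the correction terms from Lemma \ref{Formulae}(i) all have strictly smaller total degree, so no degree-$d$ monomial in $x_{n+1}$ is ever produced; the cyclotomic reduction plays no role in this lemma (it does matter later, in Step~3 of the proof of Theorem \ref{dengerate}, which may be the source of the worry). With that caveat removed, the write-up is sound and matches the paper's proof in substance.
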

\begin{proof}
Since $\theta_{n+1}$, $\theta_{n}$ and $\iota_{n-1}$ are all right $\mathfrak{H}_{n-1}^{g}$-linear,
using \eqref{free}, we can reduce $x$ to the following two cases.

\smallskip

{\it Case 1.} $x=x_{n}^{a}c_{n}^{b},$ for some $0\leq a <d,$ $b\in \Bbb Z_{2}.$

In this case, we deduce from \ref{Formulae} (i) that
$$s_{n}xs_{n}=s_{n} x_{n}^{a} c_{n}^{b} s_{n} = x_{n+1}^{a} c_{n+1}^{b}-(\ast),$$
where
$$(\ast)=\sum_{k=0}^{a-1}(x_{n}^{k}x_{n+1}^{a-1-k}+(-x_{n})^{k}x_{n+1}^{a-1-k})c_{n}^{b}s_{n}
\in \bigoplus_{a=0}^{d-2}x_{n+1}^{a}\mathfrak{H}_{n}^{g}
                  \oplus \bigoplus_{a=0}^{d-1}x_{n+1}^{a}c_{n+1}\mathfrak{H}_{n}^{g} \oplus \mathfrak{H}_{n}^{g}s_{n}\mathfrak{H}_{n}^{g}.$$
It follows that
$$\theta_{n+1}(s_{n}xs_{n})=\theta_{n+1}(x_{n+1}^{a} c_{n+1}^{b})=\delta_{(a, b), (d-1, 0)}=\iota_{n-1}\circ\theta_{n}(x_{n}^{a} c_{n}^{b})=\iota_{n-1}\circ\theta_{n}(x).$$

\smallskip

{\it Case 2.} $x=x_{j}^{a}c_{j}^{b}s_{j}\cdots s_{n-1}\in \mathfrak{H}_{n-1}^{g}s_{n-1}\mathfrak{H}_{n-1}^{g},$ for some $0\leq a <d,$ $b\in \Bbb Z_{2}$
and $1 \leq j < n.$

In this case, it is clearly that $\theta_{n}(x)=0.$ Since $j<n$, we have
$$
\begin{aligned}
s_{n}xs_{n} &= s_{n}x_{j}^{a}c_{j}^{b}s_{j}\cdots s_{n-1}s_{n} \nonumber \\
            &= x_{j}^{a}c_{j}^{b}s_{j} \cdots s_{n-2}s_{n}s_{n-1}s_{n} \nonumber\\
            &= x_{j}^{a}c_{j}^{b}s_{j} \cdots s_{n-2}s_{n-1}s_{n}s_{n-1}\in \mathfrak{H}_{n}^{g}s_{n}\mathfrak{H}_{n}^{g}, \nonumber
\end{aligned}
$$
from which we deduce $\theta_{n+1}(s_{n}xs_{n})=0=\theta_{n}(x).$

\end{proof}

{\bf Proof of Theorem \ref{dengerate}}:
We prove that the form $t_{n, d}$ is symmetric when level $d$ is odd and $t_{n, d}$ is super symmetric when level $d$ is even for any $d\in \Bbb N$. Since each $\theta_{i} $ is homogeneous of degree $0$, we have $t_{n, d}\bigl((\mathfrak{H}_{n}^{g})_{\overline{1}}\bigr)=0$. We only need to show the following: \begin{equation}\label{MainFormul}
t_{n, d}(xy)=(-1)^{(d-1)|x||y|}t_{n, d}(yx),\quad \text{for any homogeneous $x, y \in \mathfrak{H}_{n}^{g},\,d\in \Bbb N$.}
\end{equation}  We show \eqref{MainFormul} by induction on $n \in \Bbb N$.

For $n=0,$ $d \in \Bbb N,$ we denote $\mathfrak{H}_{0}^{g}=(\mathfrak{H}_{0}^{g})_{\overline{0}}={\rm R}$
and $t_{0, d}=\theta_{0}={\rm Id}_{{\rm R}}.$

For $n=1,$ $d \in \Bbb N,$ the algebra $\mathfrak{H}_{1}^{g}$ is generated by $x_{1}$ and $c_{1}.$ Recall the definition of $f,$ we deduce $\theta_{1}(x_{1}^{d})=-a_{d-1}=0,\,\theta_{1}(x_{1}^{d+1})=-a_{d-2}$
and
$$\theta_{1}(x_{1}^{d+k})=\sum_{\substack{0< t\leq \frac{d}{2}, \\ t\in \NN}}-a_{d-2t}\theta_{1}(x_{1}^{d+k-2t}), \quad k \geq 2.$$
Induction on $k\in \Bbb N,$ we have
$t_{1, d}(x_{1}^{d+k})=F_{k}(a_{d-2}, a_{d-4}, \cdots )$ for some polynomial $F_{k}$ who has no constant term on $a_{d-2}, a_{d-4}, \ldots,$ and \begin{equation}\label{vanish}
                                     F_{k}=0,\,\qquad \text{if $k$ is even.}
                                     \end{equation}
We shall prove the equation \eqref{MainFormul}. There are two cases. \smallskip

{\it Case 1.}  $0 \leq a, a' < d$ satisfy $a+a' < d$ and $b, b' \in \Bbb Z_{2}.$  We have
$$
\begin{aligned}
\theta_{1}(x_{1}^{a} c_{1}^{b} x_{1}^{a'} c_{1}^{b'}) &= (-1)^{ba'} \delta_{b, b'} \delta_{a+a', d-1}, \nonumber \\
\theta_{1}(x_{1}^{a'} c_{1}^{b'} x_{1}^{a} c_{1}^{b}) &= (-1)^{b'a} \delta_{b, b'} \delta_{a+a', d-1}, \nonumber
\end{aligned}
$$ and the above two equations are nonzero if and only if $b=b'$ and $a+a'=d-1$. When $d$ is odd, this implies the above two equations are nonzero only if  $a,a'$ have the same parity and $\theta_{1}(x_{1}^{a} c_{1}^{b} x_{1}^{a'} c_{1}^{b'})=\theta_{1}(x_{1}^{a'} c_{1}^{b'} x_{1}^{a} c_{1}^{b})=(-1)^{d-1}\theta_{1}(x_{1}^{a'} c_{1}^{b'} x_{1}^{a} c_{1}^{b}).$ When $d$ is even, we have that the above two equations are nonzero only if  $a,a'$ have different parities and $\theta_{1}(x_{1}^{a} c_{1}^{b} x_{1}^{a'} c_{1}^{b'})=-\theta_{1}(x_{1}^{a'} c_{1}^{b'} x_{1}^{a} c_{1}^{b})=(-1)^{d-1}\theta_{1}(x_{1}^{a'} c_{1}^{b'} x_{1}^{a} c_{1}^{b}).$

\smallskip

{\it Case 2.}  $0 \leq a, a' < d$ satisfy $d \leq a+a' \leq 2d-2,$ and $b, b' \in \Bbb Z_{2}.$ We have
$$
\begin{aligned}
\theta_{1}(x_{1}^{a} c_{1}^{b} x_{1}^{a'} c_{1}^{b'}) &= (-1)^{ba'} \delta_{b, b'}\theta_{1}(x_{1}^{a+a'})
                                                       =(-1)^{ba'} \delta_{b, b'}F_{a+a'-d}(a_{d-2}, a_{d-4}, \cdots ), \nonumber \\
\theta_{1}(x_{1}^{a'} c_{1}^{b'} x_{1}^{a} c_{1}^{b}) &= (-1)^{b'a} \delta_{b, b'}\theta_{1}(x_{1}^{a+a'})
                                                        =(-1)^{b'a} \delta_{b, b'}F_{a+a'-d}(a_{d-2}, a_{d-4}, \cdots ), \nonumber
\end{aligned}
$$ and the above two equations are nonzero  only if $b=b'$ and $a+a'-d$ is odd by \eqref{vanish}. When $d$ is odd, this implies the above two equations are nonzero  only if $b=b'$ and $a+a'$ is even, i.e. $a,a'$ have the same parity. It follows $\theta_{1}(x_{1}^{a} c_{1}^{b} x_{1}^{a'} c_{1}^{b'})=\theta_{1}(x_{1}^{a'} c_{1}^{b'} x_{1}^{a} c_{1}^{b})
                                                            =(-1)^{ba} \delta_{b, b'}F_{a+a'-d}(a_{d-2}, a_{d-4}, \cdots ).$
When $d$ is even, we have that the above two equations are nonzero  only if $b=b'$ and $a+a'$ is odd, i.e. $a,a'$ have different parities. We deduce
$$\theta_{1}(x_{1}^{a} c_{1}^{b} x_{1}^{a'} c_{1}^{b'})=(-1)^{b}\theta_{1}(x_{1}^{a'} c_{1}^{b'} x_{1}^{a} c_{1}^{b})=(-1)^{ba'} \delta_{b, b'}F_{a+a'-d}(a_{d-2}, a_{d-4},\ldots ).$$

Hence \eqref{MainFormul} holds for $n=1.$

For $n>1,$ assume \eqref{MainFormul} holds for $n$, we want to prove \eqref{MainFormul} holds for $n+1$. We first claim that \begin{equation}\label{Mainstep}
t_{n+1, d}(xy)=(-1)^{(d-1)|x||y|}t_{n+1, d}(yx),\,\,\text{$\forall y \in \mathfrak{H}_{n+1}^{g}$ and $x\in \{x_{1}, \ldots, x_{n+1}, s_{1}, \ldots, s_{n}, c_{1}, \ldots, c_{n+1}\}.$}
\end{equation}
We divide the proof of \eqref{Mainstep} into four steps.

$Step$ 1. For each generator $x\in \{x_{1}, \ldots, x_{n}, s_{1}, \ldots, s_{n-1}, c_{1}, \ldots, c_{n}\}\subseteq \mathfrak{H}_{n+1}^{g},$
we prove that $t_{n+1, d}(xy)=(-1)^{(d-1)|x||y|}t_{n+1, d}(yx)$ for any $y \in \mathfrak{H}_{n+1}^{g},$ $d\in \Bbb N.$

In fact, we have
$$
\begin{aligned}
 t_{n+1, d}(xy) &= t_{n, d}\left( \theta_{n+1}(xy) \right) \nonumber \\
                &= t_{n, d}\left( x\theta_{n+1}(y) \right) \nonumber \\
                &= (-1)^{(d-1)|x||y|}t_{n, d}\left(\theta_{n+1}(y)x \right) \nonumber \\
                &= (-1)^{(d-1)|x||y|}t_{n, d}\left(\theta_{n+1}(yx) \right) \nonumber \\
                &= (-1)^{(d-1)|x||y|}t_{n+1, d}(xy),
\end{aligned}
$$
where in the second and fourth equation we have used that $\theta_{n+1}$ is $\mathfrak{H}_{n}^{g}$-bilinearity, in the third equation we have used that $\theta_{n+1}$ is homogeneous of degree $0$ and in the last equation we have used induction hypothesis.

$Step$ 2. For generator $x=c_{n+1}\in \mathfrak{H}_{n+1}^{g},$
we prove that $t_{n+1, d}(xy)=(-1)^{d-1}t_{n+1, d}(yx)$ for any $y \in (\mathfrak{H}_{n+1}^{g})_{\overline{1}},$ $d\in \Bbb N.$

Using Corollary \ref{decomposition2}, $y$ can be written as the following form:
\begin{equation}\label{y}
y=\sum_{a=0}^{d-1} \left( x_{n+1}^{a}\sigma_{a}+x_{n+1}^{a}c_{n+1}\tau_{a} \right) \\
     +\sum_{a=0}^{d-1}\sum_{j=1}^{n}\left( x_{j}s_{j}\cdots s_{n}\mu_{a,j} + x_{j}c_{j}s_{j}\cdots s_{n}\nu_{a,j} \right),
\end{equation}
where $\sigma_{a}, \tau_{a}, \mu_{a,j}, \nu_{a,j}\in \mathfrak{H}_{n}^{g}.$ We have $$xy=\sum_{a=0}^{d-1} \left( c_{n+1}x_{n+1}^{a}\sigma_{a}+c_{n+1}x_{n+1}^{a}c_{n+1}\tau_{a} \right) \\
     +\sum_{a=0}^{d-1}\sum_{j=1}^{n}\left( c_{n+1}x_{j}s_{j}\cdots s_{n}\mu_{a,j} + c_{n+1}x_{j}c_{j}s_{j}\cdots s_{n}\nu_{a,j} \right).
$$ Since \begin{align*}
          c_{n+1}x_{n+1}^{a}\sigma_{a}&=(-1)^ax_{n+1}^{a}c_{n+1}\sigma_{a}\in x_{n+1}^{a}c_{n+1}\mathfrak{H}_{n}^{g};\\
          c_{n+1}x_{n+1}^{a}c_{n+1}\tau_{a}&=(-1)^a x_{n+1}^{a}\tau_{a}\in x_{n+1}^{a}\mathfrak{H}_{n}^{g};\\
           c_{n+1}x_{j}s_{j}\cdots s_{n}\mu_{a,j}&=x_{j}s_{j}\cdots s_{n} c_{n}\mu_{a,j}  \in \mathfrak{H}_{n}^{g} s_n\mathfrak{H}_{n}^{g}; \\
          c_{n+1}x_{j}c_{j}s_{j}\cdots s_{n}\nu_{a,j}&= x_{j}c_{j}s_{j}\cdots s_{n}c_n\nu_{a,j} \in \mathfrak{H}_{n}^{g} s_n\mathfrak{H}_{n}^{g},
        \end{align*}  we deduce that
$t_{n+1, d}(xy)=t_{n, d}(\theta_n(xy))=(-1)^{d-1}t_{n,d}(\tau_{d-1})$.

Similarly, we have $$yx=\sum_{a=0}^{d-1} \left( x_{n+1}^{a}\sigma_{a}c_{n+1}+x_{n+1}^{a}c_{n+1}\tau_{a}c_{n+1} \right) \\
     +\sum_{a=0}^{d-1}\sum_{j=1}^{n}\left(x_{j}s_{j}\cdots s_{n}\mu_{a,j} c_{n+1} + x_{j}c_{j}s_{j}\cdots s_{n}\nu_{a,j} c_{n+1} \right).
$$ Note that $|y|=1$, which implies $|\tau_{a}|=|\nu_{a,j}|=\overline{0}$ and $|\sigma_{a}|=|\mu_{a,j}|=\overline{1}$. Hence \begin{align*}
         x_{n+1}^{a}\sigma_{a} c_{n+1}&=-x_{n+1}^{a}c_{n+1}\sigma_{a}\in x_{n+1}^{a}c_{n+1}\mathfrak{H}_{n}^{g};\\
         x_{n+1}^{a}c_{n+1}\tau_{a} c_{n+1}&= x_{n+1}^{a}\tau_{a}\in x_{n+1}^{a}\mathfrak{H}_{n}^{g};\\
           x_{j}s_{j}\cdots s_{n}\mu_{a,j}c_{n+1}&=-x_{j}c_{j}s_{j}\cdots s_{n} \mu_{a,j}  \in \mathfrak{H}_{n}^{g} s_n\mathfrak{H}_{n}^{g}; \\
         x_{j}c_{j}s_{j}\cdots s_{n}\nu_{a,j} c_{n+1}&= x_{j} s_{j}\cdots s_{n}\nu_{a,j} \in \mathfrak{H}_{n}^{g} s_n\mathfrak{H}_{n}^{g},
        \end{align*} and we deduce $t_{n+1, d}(yx)=t_{n, d}(\theta_n(yx))=t_{n,d}(\tau_{d-1})$. Combining with the result in last paragraph, we have
$$t_{n+1, d}(xy)=(-1)^{d-1}t_{n, d}(\tau_{d-1})=(-1)^{d-1}t_{n+1, d}(yx).$$

$Step$ 3. For generator $x=x_{n+1}\in \mathfrak{H}_{n+1}^{g},$
we prove that $t_{n+1, d}(xy)=t_{n+1, d}(yx)$ for any $y \in (\mathfrak{H}_{n+1}^{g})_{\overline{0}},$ $d\in \Bbb N.$

Again, using Corollary \ref{decomposition2},  $y$ can be written as the following form:
\begin{equation}
y=\sum_{a=0}^{d-1} \left( x_{n+1}^{a}\sigma_{a}+x_{n+1}^{a}c_{n+1}\tau_{a} \right) \\
     +\sum_{a=0}^{d-1}\sum_{j=1}^{n}\left( x_{j}s_{j}\cdots s_{n}\mu_{a,j} + x_{j}c_{j}s_{j}\cdots s_{n}\nu_{a,j} \right),
\end{equation}
where $\sigma_{a}, \tau_{a}, \mu_{a,j}, \nu_{a,j}\in \mathfrak{H}_{n}^{g}.$ Then we have
$$
\begin{aligned}
 xy &=x_{n+1}^{d}\sigma_{d-1}+x_{n+1}^{d}c_{n+1}\tau_{d-1}+x_{n+1}^{d-1}\sigma_{d-2}+x_{n+1}^{d-1}c_{n+1}\tau_{d-2}
                     +\sum_{a=0}^{d-3} \left( x_{n+1}^{a+1}\sigma_{a}+x_{n+1}^{a+1}c_{n+1}\tau_{a} \right) \\
    &+ \sum_{a=0}^{d-1}\sum_{j=1}^{n}\left( x_{j}s_{j}\cdots s_{n-1} (s_{n}x_{n}+1+c_{n}c_{n+1})\mu_{a,j}
                  + x_{j}c_{j}s_{j}\cdots s_{n-1} (s_{n}x_{n}+1+c_{n}c_{n+1}) \nu_{a,j} \right) \nonumber \\
    &\in x_{n+1}^{d}\sigma_{d-1}+x_{n+1}^{d}c_{n+1}\tau_{d-1}+x_{n+1}^{d-1}\sigma_{d-2}+\bigoplus_{a=0}^{d-2}x_{n+1}^{a}\mathfrak{H}_{n}^{g}
                  \oplus \bigoplus_{a=0}^{d-1}x_{n+1}^{a}c_{n+1}\mathfrak{H}_{n}^{g} \oplus \mathfrak{H}_{n}^{g}s_{n}\mathfrak{H}_{n}^{g}.
\end{aligned}
$$ Using Lemma \ref{Formulae} (ii), we can deduce  $$0=s_n\cdots s_1f(x_1)s_1\cdots s_n \in x_{n+1}^{d} +\mathfrak{H}_{n}^{g}s_{n}\mathfrak{H}_{n}^{g} \oplus \bigoplus_{0 \leq k \leq d-2, b\in \Bbb Z_{2}}x_{n+1}^{k}c_{n+1}^{b}\mathfrak{H}_{n}^{g},$$ hence
\begin{align}
x_{n+1}^{d}\sigma_{d-1} &\in \mathfrak{H}_{n}^{g}s_{n}\mathfrak{H}_{n}^{g} \oplus \bigoplus_{0 \leq k \leq d-2, b\in \Bbb Z_{2}}x_{n+1}^{k}c_{n+1}^{b}\mathfrak{H}_{n}^{g},\label{little inclusion1}\\
x_{n+1}^{d}c_{n+1}\tau_{d-1}& \in \mathfrak{H}_{n}^{g}s_{n}\mathfrak{H}_{n}^{g} \oplus \bigoplus_{0 \leq k \leq d-2, b\in \Bbb Z_{2}}x_{n+1}^{k}c_{n+1}^{b}\mathfrak{H}_{n}^{g},\label{little inclusion2}
\end{align}

which implies $t_{n+1, d}(xy)=t_{n, d}(\theta_{n+1}(xy))=t_{n, d}(\sigma_{d-2}).$

Similarly, we have
\begin{align*}
 y x &=\sum_{a=0}^{d-1} \left( x_{n+1}^{a}\sigma_{a}x_{n+1}+x_{n+1}^{a}c_{n+1}\tau_{a} x_{n+1} \right) \\
    &+ \sum_{a=0}^{d-1}\sum_{j=1}^{n}\left( x_{j}s_{j}\cdots s_{n-1} s_n\mu_{a,j}x_{n+1}
                  + x_{j}c_{j}s_{j}\cdots s_{n-1} s_n\nu_{a,j}x_{n+1} \right) \nonumber \\
    &=\sum_{a=0}^{d-1} \left( x_{n+1}^{a+1}\sigma_{a}-x_{n+1}^{a+1}c_{n+1}\tau_{a}  \right) \\
    &+ \sum_{a=0}^{d-1}\sum_{j=1}^{n}\left( x_{j}s_{j}\cdots s_{n-1} (x_ns_n+1-c_nc_{n+1})\mu_{a,j}
                  + x_{j}c_{j}s_{j}\cdots s_{n-1} (x_ns_n+1-c_nc_{n+1})\nu_{a,j} \right) \nonumber \\
    &\in x_{n+1}^{d}\sigma_{d-1}+x_{n+1}^{d}c_{n+1}\tau_{d-1}+x_{n+1}^{d-1}\sigma_{d-2}+\bigoplus_{a=0}^{d-2}x_{n+1}^{a}\mathfrak{H}_{n}^{g}
                  \oplus \bigoplus_{a=0}^{d-1}x_{n+1}^{a}c_{n+1}\mathfrak{H}_{n}^{g} \oplus \mathfrak{H}_{n}^{g}s_{n}\mathfrak{H}_{n}^{g}\\
    &\in x_{n+1}^{d-1}\sigma_{d-2}+\bigoplus_{a=0}^{d-2}x_{n+1}^{a}\mathfrak{H}_{n}^{g}
                  \oplus \bigoplus_{a=0}^{d-1}x_{n+1}^{a}c_{n+1}\mathfrak{H}_{n}^{g} \oplus \mathfrak{H}_{n}^{g}s_{n}\mathfrak{H}_{n}^{g},
\end{align*}
where we have used \eqref{little inclusion1} and \eqref{little inclusion2} in the last inclusion. Hence we deduce $t_{n+1, d}(xy)= t_{n, d}(\sigma_{d-2})=t_{n+1, d}(yx).$

$Step$ 4. For generator $x=s_{n}\in \mathfrak{H}_{n+1}^{g},$
we prove that $t_{n+1, d}(xy)=t_{n+1, d}(yx)$ for any $y \in (\mathfrak{H}_{n+1}^{g})_{\overline{0}},$ $d\in \Bbb N.$

Due to the decomposition (\ref{decomposition}), we can reduce $y$ to the following two cases.

(i) If $y=x_{n+1}^{a}c_{n+1}^{b}\tilde{y},$ for some $0 \leq a < d,$ $b\in \Bbb Z_{2}$ and $\tilde{y} \in \left(\mathfrak{H}_{n}^{g}\right)_{b},$ then

\begin{align*}
 xy &= s_{n}x_{n+1}^{a}c_{n+1}^{b}\tilde{y} \nonumber \\
    &= \left( x_{n}^{a}s_{n}+\sum_{k=0}^{a-1}(x_{n}^{k}x_{n+1}^{a-1-k}-x_{n}^{k}(-x_{n+1})^{a-1-k}c_{n}c_{n+1})\right)c_{n+1}^{b}\tilde{y} \nonumber \\
    &= x_{n}^{a}c_{n}^{b}s_{n}\tilde{y}
    +\sum_{k=0}^{a-1}(x_{n+1}^{a-1-k}c_{n+1}^{b}x_{n}^{k}\tilde{y}-(-1)^{a+b}x_{n+1}^{a-1-k}c_{n+1}^{b+1}c_{n}x_{n}^{k}\tilde{y})
    \nonumber \\
    &\in \mathfrak{H}_{n}^{g}s_{n}\mathfrak{H}_{n}^{g} \oplus \bigoplus_{0 \leq k \leq d-2, b\in \Bbb Z_{2}}x_{n+1}^{k}c_{n+1}^{b}\mathfrak{H}_{n}^{g}.
\end{align*}

On the other hand, we have $c_{n+1}^{b}\widetilde{y}=(-1)^{b}\widetilde{y}c_{n+1}^{b},$
then
$$
\begin{aligned}
 yx &= x_{n+1}^{a}c_{n+1}^{b}\tilde{y} s_{n}
    = (-1)^{b}\widetilde{y}x_{n+1}^{a}c_{n+1}^{b}s_{n}
    = (-1)^{b}\widetilde{y}x_{n+1}^{a}s_{n} c_{n}^{b}\nonumber \\
    &= (-1)^{b}\widetilde{y} \left( s_{n} x_{n}^{a}+\sum_{k=0}^{a-1}(x_{n}^{k}x_{n+1}^{a-1-k}
    +(-x_{n})^{k}x_{n+1}^{a-1-k}c_{n}c_{n+1})\right)c_{n}^{b} \nonumber \\
    &= (-1)^{b}\widetilde{y}s_{n} x_{n}^{a}c_{n}^{b}
    +\sum_{k=0}^{a-1}((-1)^{b}x_{n+1}^{a-1-k}\widetilde{y}x_{n}^{k}c_{n}^{b}
    -(-1)^{b}\widetilde{y}(-x_{n})^{k}x_{n+1}^{a-1-k}c_{n+1}c_{n}^{b+1}) \nonumber \\
    &=  (-1)^{b}\widetilde{y}s_{n} x_{n}^{a}c_{n}^{b}
    +\sum_{k=0}^{a-1}((-1)^{b}x_{n+1}^{a-1-k}\widetilde{y}x_{n}^{k}c_{n}^{b}
    -(-1)^{b+1}x_{n+1}^{a-1-k}c_{n+1}\widetilde{y}(-x_{n})^{k}c_{n}^{b+1}) \nonumber \\
    &\in \mathfrak{H}_{n}^{g}s_{n}\mathfrak{H}_{n}^{g} \oplus \bigoplus_{0 \leq k \leq d-2, b\in \Bbb Z_{2}}x_{n+1}^{k}c_{n+1}^{b}\mathfrak{H}_{n}^{g}.
\end{aligned}
$$
In the case, we have $t_{n+1, d}(xy)=t_{n+1, d}(yx)=0.$

(ii) If $y=y's_{n}y'' \in \mathfrak{H}_{n}^{g}s_{n}\mathfrak{H}_{n}^{g},$ for some $y', y'' \in \mathfrak{H}_{n}^{g},$ then
$$
\begin{aligned}
 t_{n+1, d}(xy) &= t_{n, d}\left( \theta_{n+1}(s_{n}y's_{n}y'') \right) \nonumber \\
                &= t_{n, d}\left( \theta_{n+1}(s_{n}y's_{n})y'' \right) \nonumber \\
                &= t_{n, d}\left( \iota_{n-1}\circ \theta_{n}(y')y'' \right) \nonumber \\
                &= t_{n-1, d}\left(\theta_{n}\left(\iota_{n-1}\circ\theta_{n}(y')y'' \right)  \right) \nonumber \\
                &= t_{n-1, d}\left(\theta_{n}(y')\theta_{n}(y'')  \right),
\end{aligned}
$$
where in the first and the fourth equalities we have used definitions of $t_{n+1, d}$ and $t_{n, d},$ respectively, in the second and the last equalities we have used $\mathfrak{H}_{n}^{g}$-bilinearity of $\theta_{n+1}$ and $\mathfrak{H}_{n-1}^{g}$-bilinearity of $\theta_{n},$ respectively, and in the third equality we have used Lemma \ref{degeneratelem}.

Similarly, we have
$$
\begin{aligned}
 t_{n+1, d}(yx) &= t_{n, d}\left( \theta_{n+1}(y's_{n}y''s_{n}) \right) \nonumber \\
                &= t_{n, d}\left( y' \theta_{n+1}(s_{n}y''s_{n}) \right) \nonumber \\
                &= t_{n, d}\left( y' \iota_{n-1}\circ\theta_{n}(y'') \right) \nonumber \\
                &= t_{n-1, d}\left(\theta_{n}\left(y'\iota_{n-1}\circ\theta_{n}(y'') \right)  \right) \nonumber \\
                &= t_{n-1, d}\left(\theta_{n}(y')\theta_{n}(y'')  \right).
\end{aligned}
$$
This completes the proof of \eqref{Mainstep}.

Finally, we claim that  for any $d \in \Bbb N,$
if $t_{n+1, d}(xy)=(-1)^{(d-1)|x||y|}t_{n+1, d}(yx)$ and $t_{n+1, d}(xz)=(-1)^{(d-1)|x||z|}t_{n+1, d}(zx)$
for all $x \in\mathfrak{H}_{n+1}^{g},$ then $t_{n+1, d}(xyz)=(-1)^{(d-1)|x||yz|}t_{n+1, d}(yzx)$ for all $x \in\mathfrak{H}_{n+1}^{g}.$

In fact,
$$
\begin{aligned}
 t_{n+1, d}(xyz) &= (-1)^{(d-1)|z||xy|}t_{n+1, d}(z xy) \nonumber \\
                 &= (-1)^{(d-1)(|z||x|+|z||y|)}(-1)^{(d-1)|y||zx|}t_{n+1, d}(y zx)\nonumber \\
                 &= (-1)^{(d-1)(|z||x|+|z||y|+|y||z|+|y||x|)}t_{n+1, d}(y zx) \nonumber \\
                 &= (-1)^{(d-1)|x||yz|}t_{n+1, d}(yzx).
\end{aligned}
$$ This combining with \eqref{Mainstep} completes the proof of \eqref{MainFormul}.
\qed

\begin{theorem}
Let $n, d \in \Bbb N.$ The (super)symmetric form $t_{n, d}$ has the following explicit formula
$$
t_{n, d}(x^{\alpha}c_I w)=
\begin{cases}
1,& \text{{\rm if} $\alpha_{1}= \cdots = \alpha_{n}=d-1, I=\emptyset, w=1$};\\
0, & \text{{\rm otherwise}},
\end{cases}
$$
for any $0 \leq \alpha_{1}, \ldots, \alpha_{n} < d,$ $I \subseteq [n],$ $w \in \mathfrak{S}_{n}.$
\end{theorem}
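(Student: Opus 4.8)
The plan is to compute $t_{n,d}$ on the basis elements $x^\alpha c_I w$ of \eqref{basis1} by induction on $n$, unwinding the definition $t_{n,d}=\theta_1\circ\cdots\circ\theta_n$ through the projection $\theta_n$ applied first. Recall $\theta_n\colon\mathfrak H_n^g\to\mathfrak H_{n-1}^g$ picks out the $\mathfrak H_{n-1}^g$-coefficient of $x_n^{d-1}$ in the decomposition $\mathfrak H_n^g=x_n^{d-1}\mathfrak H_{n-1}^g\oplus\bigoplus_{a=0}^{d-2}x_n^a\mathfrak H_{n-1}^g\oplus\bigoplus_{a=0}^{d-1}x_n^ac_n\mathfrak H_{n-1}^g\oplus\mathfrak H_{n-1}^gs_{n-1}\mathfrak H_{n-1}^g$. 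The first key step is to rewrite a general basis element $x^\alpha c_I w$ into this decomposition and read off $\theta_n(x^\alpha c_I w)$.

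First I would handle the positions of $x_n$, $c_n$, $s_{n-1}$. Write $w\in\mathfrak S_n$ using the coset decomposition $\mathfrak S_n=\bigsqcup_{j=1}^n s_{n-1}\cdots s_j\,\mathfrak S_{n-1}$ (equivalently by whether $w^{-1}(n)=n$ or not, iterating). If $w$ does not fix $n$, then $x^\alpha c_I w$ lands in the $\mathfrak H_{n-1}^g s_{n-1}\mathfrak H_{n-1}^g$ summand, so $\theta_n$ kills it. If $w$ fixes $n$, then $w\in\mathfrak S_{n-1}$ and $x^\alpha c_I w = x_n^{\alpha_n}c_n^{[n\in I]}\cdot(x_1^{\alpha_1}\cdots x_{n-1}^{\alpha_{n-1}}c_{I\setminus\{n\}}w)$ after moving $x_n,c_n$ past the $\mathfrak H_{n-1}^g$-part (they supercommute up to a sign, but the sign is irrelevant since we only care about which summand it lies in); this is already in the stated direct-sum form. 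Hence $\theta_n(x^\alpha c_I w)=\delta_{\alpha_n,d-1}\,\delta_{n\notin I}\,\delta_{w(n)=n}\cdot x^{\alpha'}c_{I}w$ where $\alpha'=(\alpha_1,\dots,\alpha_{n-1})$ and now $I\subseteq[n-1]$, $w\in\mathfrak S_{n-1}$.

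Then I would simply iterate: applying $\theta_{n-1},\dots,\theta_1$ in turn forces $\alpha_{n-1}=d-1$, $n-1\notin I$, $w(n-1)=n-1$, and so on down to $\alpha_1=d-1$, $I=\emptyset$, $w=1$; at each stage the surviving element is again a basis element of the smaller algebra, so the induction goes through cleanly. When all constraints are met the end result is $\theta_1(x_1^{d-1})=$ the coefficient of $x_1^{d-1}$ in $x_1^{d-1}$, which is $1$; otherwise some $\theta_i$ returns $0$ and the whole composite is $0$. This gives exactly the claimed formula.

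The one genuine subtlety — the main obstacle — is the very first reduction step: one must be careful that moving $x_n^{\alpha_n}c_n^b$ to the front past $w$ (when $w$ fixes $n$) and verifying that a non-$n$-fixing $w$ genuinely produces an element of $\mathfrak H_{n-1}^g s_{n-1}\mathfrak H_{n-1}^g$ with \emph{no} leftover $x_n^{d-1}\mathfrak H_{n-1}^g$-component. For the latter, writing $w=s_{n-1}\cdots s_j u$ with $u\in\mathfrak S_{n-1}$, $j<n$, one has $x^\alpha c_I s_{n-1}\cdots s_j = x_1^{\alpha_1}\cdots x_{n-1}^{\alpha_{n-1}}c_{I\cap[n-1]}\cdot x_n^{\alpha_n}c_n^{[n\in I]}s_{n-1}\cdots s_j$, and since $x_n^{\alpha_n}c_n^b s_{n-1}$ rewrites via \eqref{px1}, \eqref{pc} into a sum of terms each still containing the factor $s_{n-1}$ — using that $c_n s_{n-1}=s_{n-1}c_{n-1}$ and that $x_n s_{n-1}=s_{n-1}x_{n-1}+(\text{lower }x_n\text{-degree, no }s_{n-1})$ but here we keep pushing to get everything in $\mathfrak H_{n-1}^g s_{n-1}\mathfrak H_{n-1}^g$ — one should invoke Lemma~\ref{lem. of K.}(iii) directly: $x^\alpha c_I w$ with $w$ not fixing $n$ is, by the basis \eqref{basis1} and the bimodule decomposition \eqref{decomposition}, supported entirely in $\mathfrak H_{n-1}^g s_{n-1}\mathfrak H_{n-1}^g$, on which $\theta_n$ vanishes by definition. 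With that observation in hand the remaining bookkeeping is routine.
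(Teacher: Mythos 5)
Your overall strategy coincides with the paper's: induct on $n$, apply $\theta_n$ first via $t_{n,d}=t_{n-1,d}\circ\theta_n$, and split into the two cases $w\in\mathfrak S_{n-1}$ and $w\notin\mathfrak S_{n-1}$; your treatment of the case $w\in\mathfrak S_{n-1}$ agrees with the paper's. However, your resolution of what you yourself single out as the main obstacle — the case $w\notin\mathfrak S_{n-1}$ — is wrong as stated. It is not true that a basis element $x^{\alpha}c_Iw$ with $w$ not fixing $n$ is supported entirely in $\mathfrak H_{n-1}^{g}s_{n-1}\mathfrak H_{n-1}^{g}$, and Lemma \ref{lem. of K.}(iii) does not give this: that lemma only asserts the direct sum decomposition \eqref{decomposition}; it says nothing about where individual basis elements of \eqref{basis1} sit. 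Concretely, by \eqref{px1} one has $x_ns_{n-1}=s_{n-1}x_{n-1}+1+c_{n-1}c_n$, whose components $1\in x_n^{0}\mathfrak H_{n-1}^{g}$ and $c_{n-1}c_n=-c_nc_{n-1}\in x_n^{0}c_n\mathfrak H_{n-1}^{g}$ lie outside the summand $\mathfrak H_{n-1}^{g}s_{n-1}\mathfrak H_{n-1}^{g}$; already for $n=2$, $d\ge 2$ the basis element $x_2s_1$ refutes your support claim. So "$\theta_n$ vanishes by definition" is not a valid justification here.

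What actually saves the step — and what the paper does — is a degree bound rather than a support statement. Writing $w=s_j\cdots s_{n-1}w'$ with $w'\in\mathfrak S_{n-1}$, using the $(\mathfrak H_{n-1}^{g},\mathfrak H_{n-1}^{g})$-bilinearity of $\theta_n$ and $c_n^{\beta_n}s_{n-1}=s_{n-1}c_{n-1}^{\beta_n}$, one reduces to computing $\theta_n(x_n^{\alpha_n}s_{n-1})$; then Lemma \ref{Formulae}(i) gives $x_n^{\alpha_n}s_{n-1}\in\mathfrak H_{n-1}^{g}s_{n-1}\mathfrak H_{n-1}^{g}\oplus\bigoplus_{0\le k\le\alpha_n-1,\,b\in\mathbb Z_2}x_n^{k}c_n^{b}\mathfrak H_{n-1}^{g}$, and since $\alpha_n-1\le d-2$ none of the correction terms has an $x_n^{d-1}c_n^{0}$-component, so $\theta_n$ kills them even though they are not in the $s_{n-1}$-summand. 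You in fact record the needed fact parenthetically ("lower $x_n$-degree, no $s_{n-1}$") but then discard it in favour of the false support claim; keeping that degree estimate, and using the hypothesis $\alpha_n\le d-1$ explicitly, closes the gap, after which your induction proceeds exactly as in the paper.
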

\begin{proof}
We use induction on $n.$ For $n=1$, this is trival. Now we assume the statement is true for $n-1 \geq 1$.
We write $c_I=c_{1}^{\beta_{1}} \cdots c_{n}^{\beta_{n}}$ for some $\beta_1,\ldots,\beta_n \in \mathbb{Z}_2.$

(i) If $w\notin\mathfrak{S}_{n-1}$, then $w=s_{j} \cdots s_{n-1}w',$ for some $1 \leq j < n,$ $w' \in\mathfrak{S}_{n-1}.$ We have
$$
\begin{aligned}
t_{n, d}(x^{\alpha}c_I w)
 &= t_{n, d}(x_{1}^{\alpha_{1}} \cdots x_{n}^{\alpha_{n}} c_{1}^{\beta_{1}} \cdots c_{n}^{\beta_{n}} w) \nonumber \\
 &= t_{n-1, d}(\theta_{n} (x_{1}^{\alpha_{1}} \cdots x_{n}^{\alpha_{n}} c_{1}^{\beta_{1}} \cdots c_{n}^{\beta_{n}} s_{j} \cdots s_{n-1}w'))\nonumber \\
 &= t_{n-1, d}(x_{1}^{\alpha_{1}} \cdots x_{n-1}^{\alpha_{n-1}} c_{1}^{\beta_{1}} \cdots c_{n-1}^{\beta_{n-1}}s_{j} \cdots s_{n-2} \theta_{n}(x_{n}^{\alpha_{n}}c_{n}^{\beta_{n}}s_{n-1}) w') \nonumber \\
 &= t_{n-1, d}(x_{1}^{\alpha_{1}} \cdots x_{n-1}^{\alpha_{n-1}} c_{1}^{\beta_{1}} \cdots c_{n-1}^{\beta_{n-1}}s_{j} \cdots s_{n-2} \theta_{n}(x_{n}^{\alpha_{n}}s_{n-1}) c_{n-1}^{\beta_{n}}w'),\nonumber
\end{aligned}
$$ by $\mathfrak{H}_{n}^{g}$-bilinearity of $\theta_{n}$
Since
$$x_{n}^{\alpha_{n}}s_{n-1}=s_{n-1}x_{n-1}^{\alpha_{n}}
                     +\sum_{k=0}^{\alpha_{n}-1}(x_{n-1}^{k}x_{n}^{\alpha_{n}-1-k}+(-x_{n-1})^{k}x_{n}^{\alpha_{n}-1-k}c_{n-1}c_{n})$$
$$\in \mathfrak{H}_{n-1}^{g}s_{n-1}\mathfrak{H}_{n-1}^{g} \oplus \bigoplus_{0 \leq k \leq d-2, b\in \Bbb Z_{2}}x_{n}^{k}c_{n}^{b}\mathfrak{H}_{n-1}^{g},$$
we have $\theta_{n}(x_{n}^{\alpha_{n}}s_{n-1})=0,$ and $t_{n, d}(x^{\alpha}c_I w)=0$.

(ii) If  $w\in\mathfrak{S}_{n-1}$ then
$$
\begin{aligned}
t_{n, d}(x^{\alpha}c_I w)
 &= t_{n, d}(x_{1}^{\alpha_{1}} \cdots x_{n}^{\alpha_{n}} c_{1}^{\beta_{1}} \cdots c_{n}^{\beta_{n}} w) \nonumber \\
 &= t_{n-1, d}(\theta_{n} (x_{1}^{\alpha_{1}} \cdots x_{n}^{\alpha_{n}} c_{1}^{\beta_{1}} \cdots c_{n}^{\beta_{n}} w))\nonumber \\
 &= t_{n-1, d}(x_{1}^{\alpha_{1}} \cdots x_{n-1}^{\alpha_{n-1}} c_{1}^{\beta_{1}} \cdots c_{n-1}^{\beta_{n-1}} \theta_{n}(x_{n}^{\alpha_{n}}c_{n}^{\beta_{n}}) w) \nonumber \\
 &= \delta_{(\alpha_{n}, \beta_{n}), (d-1, 0)} t_{n-1, d}(x_{1}^{\alpha_{1}} \cdots x_{n-1}^{\alpha_{n-1}} c_{1}^{\beta_{1}} \cdots c_{n-1}^{\beta_{n-1}} w) \nonumber \\
 &= \delta_{(\alpha, I, w), (\underline{d-1}, \emptyset, 1)}
\end{aligned}
$$
where in the second equality, we have used $\mathfrak{H}_{n-1}^{g}$-bilinearity of $\theta_{n}$ and in the last equality, we have used induction hypothesis. This completes the proof of Theorem.
\end{proof}

The following tiny example explains that one can not find a symmetrizing form on $\mhcn^{g}$ when $d$ is even. So the supersymmetricity of $\mhcn^{g}$
in even level seems to be essential.
\begin{example}\label{counterexamp}
Let $n=1$ and level $d=2.$
We set $g(x_1)=x_1^{2},$ then $\mhcn^{g}$ has a basis $\{1,x_1,c_1,c_1x_1\}.$
Suppose that there is a symmetrizing form ${\rm tr}:\mhcn^{g} \rightarrow {\rm R}$ on $\mhcn^{g}$ satisfying ${\rm tr}((\mhcn^{g})_{\bar{1}})=0.$
Then we have
\begin{align}
{\rm tr}(x_1)={\rm tr}(c_1^2 x_1)={\rm tr}(c_1x_1c_1)=-{\rm tr}(x_1),\nonumber
\end{align}
where in the  second equation we have used the symmetricity of ${\rm tr}$ and in the third equation we have used relations \eqref{clifford} and \eqref{xc}. Since $2\neq 0$, we deduce ${\rm tr}(x_1)=0$.
It follows that
$${\rm tr}(1 \cdot x_1)={\rm tr}(x_1\cdot x_1)={\rm tr}(c_1 \cdot x_1)={\rm tr}(c_1x_1\cdot x_1)=0.$$ This implies that in the Gram matrix of ${\rm tr}$, there is one row to be a zero.  Hence ${\rm tr}$ is degenerated.
\end{example}

\section{A basis of ${\rm Tr}(\mhcn^{g})_{\overline{0}}$}
In this section, we assume ${\rm R}$ is an integral domain with $2$ invertible. We will construct a basis of the ${\rm Tr}(\mhcn^{g})_{\overline{0}}$. Recall that we have fixed $d\geq 0,\,m:=\lfloor \frac{d}{2}\rfloor$ and $$g(x)=\sum_{\substack{0\leq t\leq \frac{d}{2}, \\ t\in \NN}}a_{d-2t}x^{d-2t}$$ such that $a_d=1.$ Let's first recall some basic combinatorial concepts on complex reflection group $G(d,1,n)$.
\subsection{Some basics on complex reflection group $G(d,1,n)$} \label{minimal element}
Recall the definition of complex reflection group $W_{d,n}$ of type $G(d,1,n)$. Using the defining relations for $W_{d,n}$, it is easy to check that for any $a,\,b\in\ZZ^{\geq 0}$, \begin{equation}\label{brl0} s_0^as_1s_0^bs_1=s_1s_0^bs_1s_0^a .
\end{equation}
We call the relation (\ref{brl0}) the weak braid relations for $W_{d,n}$.

\begin{definition}
For any two words on $S$, say $s_{i_1}\cdots s_{i_k}$ and $s_{i'_1}\cdots s_{i'_t}$, where $s_{i_j}, s_{i'_j}\in S$, we say they are {weakly braid-equivalent} if we can use a sequence of braid relations together with the additional relation (\ref{brl0}) to transform from one into another.
\end{definition}


Given $w\in W_{d,n}$, a word $s_{i_1}\cdots s_{i_k}$ is called an expression of $w$ if $s_{i_j}\in S, \forall\,1\leq i_j\leq k$, and $w=s_{i_1}\cdots s_{i_k}$. If $s_{i_1}\cdots s_{i_k}$ is an expression of $w$ with $k$ minimal, then we call it a reduced expression of $w$. In this case, following \cite{BM}, we define $\ell(w):=k$. For each conjugacy class $C$ of $W_{r,n}$, we use $C_{\min}$ to denote the set of minimal length elements in $C$.

\begin{definition} An element $w\in W_{d,n}$ is called minimal if $w\in C_{\min}$ for some conjugacy class $C$ of $W_{d,n}$.
\end{definition}

We need further combinatorial notions to describe some special minimal length elements in conjugacy classes of $W_{d,n}$ which will be used in the next subsection.

\begin{definition}\text{(\cite[Definition 3.13]{HS})} A composition $\lam=(\lam_1,\cdots,\lam_k)$ of $n$ is called an opposite partition if $\lam_1\leq\lam_2\leq\cdots\leq\lam_k$. We use $\OP_{n,-}$ to denote the set of opposite partitions of $n$. A color datum associated with an opposite partition $\lam=(\lam_1,\cdots,\lam_k)\in\OP_{n,-}$ is a function $c:\{1,2,\cdots,k\} \to \{1,\cdots,d-1\}$ such that $c(i)\geq c(i+1)$ whenever $\lam_i=\lam_{i+1}$.
\end{definition}

\begin{definition}\text{(\cite[Definition 3.14]{HS})}\label{colorsemi} If $\lam$ is an opposite partition of $s$ with a color data $\{c(i)|1\leq i\leq\ell(\lam)\}$, $\mu$ is a composition of $n-s$, then we call the bicomposition $(\lam,\mu)$ a colored semi-bicomposition of $n$. We use
$\CC_{n}$ to denote the set of colored semi-bicomposition of $n$.
If $(\lam,\mu)$ is a colored semi-bicomposition of $n$ and $\mu$ is a partition, then we say $(\lam,\mu)$ is a colored semi-bipartition. We use $\CP_n$ to denote the set of colored semi-bipartitions of $n$.
\end{definition}

For each $0\leq k\leq n-1,\, l\in\ZZ^{\geq 1}$, we define
$$s'_{k,l}:=s_ks_{k-1}\cdots s_1 s^l_0s_1\cdots s_{k-1}s_k.$$ Let $\lambda=(\lambda_1,\cdots,\lambda_k)$ be a composition of $n$. We set $r_1:=0$, $r_{k+1}:=n$, and \begin{equation}\label{ri}
r_i:=\lam_1+\lam_2+\cdots+\lam_{i-1},\quad\forall\,2\leq i\leq k .
                                                         \end{equation}

Let $J:=\{0,1,\cdots,d-1\}$ and $\epsilon=(\epsilon_1,\cdots,\epsilon_k)\in J^k$. For each $1\leq i\leq k$, we define \begin{equation}\label{wlam}
w_{\lambda,\epsilon,i}:=\begin{cases}s'_{r_i,\epsilon_i}s_{r_i+1}s_{r_i+2}\cdots s_{r_{i+1}-1}, &\text{if $\epsilon_i\neq 0$;}\\
s_{r_i+1}s_{r_i+2}\cdots s_{r_{i+1}-1}, &\text{if $\epsilon_i=0$,}
\end{cases},\quad w_{\lambda,\epsilon}=\prod_{i=1}^k w_{\lambda,\epsilon,i}.
\end{equation}

For each colored semi-bicomposition ${\alpha}=(\lam,\mu)$, where $\lam=(\lam_1,\cdots,\lam_k)$ and  $\mu=(\mu_1,\cdots,\mu_l)$,  we associate it with a composition \begin{equation}\label{identification}\overline{{\alpha}}=(\overline{\alpha}_{1}, \ldots, \overline{\alpha}_{k+l}):=(\lam_1,\cdots,\lam_k,\mu_1,\cdots,\mu_l)
\end{equation} of $n$ and a sequence \begin{equation}\label{color}\epsilon=(c(1),\cdots,c(k),\underbrace{0,\cdots,0}_{\text{$l$ copies}})\in J^{k+l}.
\end{equation} We define \begin{equation}\label{walpha}
w_{\alpha}:=w_{\overline{\alpha},\epsilon}.
\end{equation}

\begin{lemma}\cite[Theorem 3.25]{HS}
Let $C$ be any conjugacy class of $W$ and $C_{\min}$ be the set of minimal length elements in $C$. Then \begin{enumerate}
\item there exists a unique $\beta_C\in\CP_n$ such that $w_{\beta_C}\in C$. Moreover, $w_{\beta_C}\in C_{\min}$;
\item for any $\alpha\in\CC_n$, $w_\alpha$ is a minimal length element in its conjugacy class.
\end{enumerate}
\end{lemma}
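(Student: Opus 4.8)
The statement to prove is the lemma attributed to \cite[Theorem 3.25]{HS}, describing minimal length elements in conjugacy classes of the complex reflection group $W=W_{d,n}$ of type $G(d,1,n)$ in terms of the explicit elements $w_\alpha$ built from colored semi-bicompositions. Since this is quoted from \cite{HS}, my plan would be to reconstruct the proof structure that such a classification typically follows, combining the conjugacy theory of imprimitive complex reflection groups with a length analysis.

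\textbf{Setup and strategy.} The plan is to first recall the combinatorial parametrization of conjugacy classes of $W_{d,n}=G(d,1,n)\cong (\ZZ/d\ZZ)\wr\Sym_n$: a conjugacy class is determined by its ``$d$-cycle type'', i.e. a multiset of pairs (cycle length, color in $\ZZ/d\ZZ$ recorded up to the cyclic equivalence that occurs when cycling the base-group entries around a cycle), which for the signed/colored permutation picture amounts to a colored bipartition where one part carries nontrivial colors and the other carries trivial color. I would then match this data with the set $\CP_n$ of colored semi-bipartitions from Definition~\ref{colorsemi}: the opposite partition $\lam$ with its color datum $c$ records the cycles whose total color is nonzero (the normalization $c(i)\in\{1,\dots,d-1\}$ together with the monotonicity condition when $\lam_i=\lam_{i+1}$ picks a canonical representative in each class), while $\mu$ records the cycles of total color zero. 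This establishes a bijection between $\CP_n$ and the set of conjugacy classes, proving that for each class $C$ there is a \emph{unique} $\beta_C\in\CP_n$ with $w_{\beta_C}\in C$; the explicit form of $w_{\beta_C,\epsilon}$ in \eqref{wlam}--\eqref{walpha} is then checked to have exactly the prescribed cycle type by direct computation with the generators $s_0,s_1,\dots,s_{n-1}$.

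\textbf{Minimality.} The heart of the argument is showing $w_{\beta_C}\in C_{\min}$ and, more generally, that each $w_\alpha$ ($\alpha\in\CC_n$) is of minimal length in its class. I would proceed by: (i) computing $\ell(w_{\lambda,\epsilon,i})$ exactly from the reduced-word description -- each factor $s'_{r_i,\epsilon_i}s_{r_i+1}\cdots s_{r_{i+1}-1}$ has an obvious expression, and one shows it is reduced, e.g. by tracking the root system / by the standard length formula for $G(d,1,n)$ counting ``inversions'' and winding numbers; (ii) establishing a lower bound for $\ell$ on the whole conjugacy class $C$ matching $\sum_i\ell(w_{\lambda,\epsilon,i})$. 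For the lower bound the standard tool is the theory of ``good'' or ``elliptic'' elements and the cyclic-shift/Geck--Pfeiffer machinery adapted to complex reflection groups (as in \cite{BM,HS}): one shows any element of $C$ can be brought by cyclic shifts (conjugation by generators, non-increasing in length) to one of the $w_\alpha$'s, and that the $w_\alpha$'s form a single cyclic-shift class of minimal elements. Concretely, one reduces a general $w\in C$ block by block, peeling off one cycle at a time, using the weak braid relation \eqref{brl0} to normalize the color contribution on each cycle into a single ``$s_0^\ell$'' occurrence.

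\textbf{Main obstacle.} The delicate point, and where I expect the real work to lie, is the length lower bound together with the reduction procedure: showing that \emph{every} element of $C$ has length $\geq\ell(w_{\beta_C})$ and that cyclic shifts suffice to reach a minimal element. This requires a careful induction -- on $n$, or on the number of cycles -- in which one must control what happens when two cycles ``interact'' under conjugation, and one must verify that the normalization of colors via \eqref{brl0} never increases length. A clean way to organize it is: decompose $W_{d,n}$ along a parabolic $W_{d,a}\times\Sym_{n-a}$ corresponding to the split into colored and uncolored cycles, handle the uncolored (i.e. symmetric-group) part by the classical Geck--Pfeiffer result on minimal length elements in $\Sym_n$, and handle each colored cycle by an explicit computation in $W_{d,\ell}$ showing $s'_{\ell-1,c}s_\ell\cdots s_{?}$-type elements are minimal among $\ell$-cycles of that color. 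Since the paper only invokes this as a cited black box, in the write-up I would state the two ingredients (bijection with conjugacy classes; minimality via cyclic shifts and the explicit length count) and refer to \cite{HS} for the detailed verification, indicating that both parts are proved there by induction on $n$ using the weak braid relations \eqref{brl0}.
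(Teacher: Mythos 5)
The paper itself gives no proof of this lemma: it is invoked as a black box, cited to \cite[Theorem~3.25]{HS}, so there is no internal argument to compare against. You correctly recognize this and instead reconstruct the expected proof structure from \cite{HS}, and your reconstruction is accurate in its essentials: the bijection between $\CP_n$ and conjugacy classes of $W_{d,n}\cong(\ZZ/d\ZZ)\wr\Sym_n$ via $d$-cycle type (with the opposite-partition ordering and the monotone color condition on equal parts providing the canonical representative, $\mu$ recording the color-zero cycles), the explicit length computation for the block elements $w_{\lambda,\epsilon,i}$ using the Bremke--Malle length function from \cite{BM}, and the cyclic-shift (Geck--Pfeiffer-type) reduction using the weak braid relation \eqref{brl0} to establish the matching lower bound. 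You also rightly flag the length lower bound combined with control of the normalization steps as the genuinely delicate part. Since this paper never attempts the proof, the appropriate thing to do is exactly what you did: identify the ingredients and defer the verification to \cite{HS}.

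One minor point of precision: in describing the $d$-cycle type you speak of recording the color of a cycle ``up to cyclic equivalence,'' but since $\ZZ/d\ZZ$ is abelian the total color around a cycle (the sum of entries) is already a well-defined invariant with no further equivalence to quotient by; the cyclic-equivalence language is only needed for wreath products with nonabelian base group. This does not affect the argument, but tightening it would make the bijection with $\CP_n$ cleaner.
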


\subsection{A new presentation of $\mhcn^{g}$}
We define $(\mhcn^{g})'$ to be the superalgebra generated by even generators
$s_0,s_1,\ldots,s_{n-1}$ and odd generators
$c_1,\ldots,c_n$ subject to the following relations
\begin{align}
s_i^2=1, &\qquad f(s_0)=0, \label{square}\\
s_is_j =s_js_i, \quad
s_is_{i+1}s_i&=s_{i+1}s_is_{i+1}, \quad|i-j|>1, i\neq 0, 0\leq i,j < n, \label{braid'}\\
s_1s_0s_1s_0-s_0s_1s_0s_1&=s_0(1+c_1c_2)s_1-(1+c_1c_2)s_1s_0, \label{weakly braid relation'}\\
c_i^2=1,c_ic_j&=-c_jc_i, \quad 1\leq i\neq j\leq n,\\
s_ic_i=c_{i+1}s_i, s_ic_{i+1}&=c_is_i,s_ic_j=c_js_i, \quad j\neq i, i+1,\label{sc1} \\
s_0c_1=-c_1s_0, s_0c_j&=c_js_0, \label{sc2}\quad 1\leq j\leq n.
\end{align}

\begin{lemma}
  There is a surjective algebraic homomorphism $\Psi: (\mhcn^{g})'\rightarrow \mhcn^{g}$ such that $\Psi(s_0)=x_1,\, \Psi(s_i)=s_i,\,\forall 1\leq i\leq n-1,\,\Psi(c_j)=c_j,\forall 1\leq j\leq n$.
\end{lemma}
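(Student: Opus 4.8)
The plan is to verify that the assignments $s_0\mapsto x_1$, $s_i\mapsto s_i$ ($1\le i\le n-1$), $c_j\mapsto c_j$ ($1\le j\le n$) respect all the defining relations \eqref{square}--\eqref{sc2} of $(\mhcn^{g})'$, so that $\Psi$ is a well-defined algebra homomorphism; surjectivity is then immediate since $x_1,s_1,\dots,s_{n-1},c_1,\dots,c_n$ generate $\mhcn^{g}$. Most of the relations are checked by direct inspection against the relations \eqref{braid}--\eqref{xc} of $\mhcn$ (which descend to $\mhcn^{g}$): the braid relations \eqref{braid'} among $s_0,s_1,\dots,s_{n-1}$ that do not involve $s_0$ are exactly \eqref{braid}; the relation $s_1s_0=c_1$-type interactions \eqref{sc2} correspond to $s_ix_1$-commutation $x_1c_1=-c_1x_1$ and $x_1c_j=c_jx_1$ from \eqref{xc} together with $s_ic_j=c_js_i$ from \eqref{pc}; the Clifford relations and the $s_ic_i=c_{i+1}s_i$ relations \eqref{sc1} are \eqref{clifford} and \eqref{pc} verbatim; and $f(s_0)=0$ maps to $g(x_1)=0$, which holds by definition of $\mhcn^{g}=\mhcn/I_g$.

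First I would dispatch the easy relations as above, leaving the one genuinely nontrivial relation, namely the ``weak braid relation'' \eqref{weakly braid relation'}:
\[
s_1s_0s_1s_0-s_0s_1s_0s_1=s_0(1+c_1c_2)s_1-(1+c_1c_2)s_1s_0 .
\]
Under $\Psi$ this becomes the identity
\[
s_1x_1s_1x_1-x_1s_1x_1s_1=x_1(1+c_1c_2)s_1-(1+c_1c_2)s_1x_1
\]
in $\mhcn^{g}$, which I would derive purely from the affine Sergeev relations. The key input is \eqref{px1}, i.e. $s_1x_1=x_2s_1-(1+c_1c_2)$, equivalently $x_1s_1=s_1x_2-(1+c_1c_2)$ after rearranging (using \eqref{px2} and \eqref{pc} to move things past $s_1$). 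I would compute $s_1x_1s_1$ first: $s_1x_1s_1=(x_2s_1-(1+c_1c_2))s_1=x_2-(1+c_1c_2)s_1$. Then $s_1x_1s_1\cdot x_1=x_2x_1-(1+c_1c_2)s_1x_1$ and, on the other side, $x_1\cdot s_1x_1s_1=x_1x_2-x_1(1+c_1c_2)s_1$. Subtracting and using $x_1x_2=x_2x_1$ from \eqref{poly}, the difference $s_1x_1s_1x_1-x_1s_1x_1s_1$ equals $x_1(1+c_1c_2)s_1-(1+c_1c_2)s_1x_1$, which is exactly the image of the right-hand side of \eqref{weakly braid relation'}. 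One must be a little careful with signs when commuting $x_1$ past $c_1c_2$ (by \eqref{xc}, $x_1c_1c_2=-c_1x_1c_2=-c_1c_2x_1$), but the two occurrences of this move appear symmetrically and the bookkeeping closes.

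The main obstacle, and the only place where care is genuinely needed, is precisely this sign-bookkeeping in verifying \eqref{weakly braid relation'}: one has to track how $x_1$, $c_1$, $c_2$ and $s_1$ interact through \eqref{poly}, \eqref{clifford}, \eqref{px1}, \eqref{px2}, \eqref{pc}, \eqref{xc}, and confirm that the anticommutation $x_1c_1=-c_1x_1$ does not spoil the identity. (Note that $x_1(1+c_1c_2)=(1+c_1c_2)x_1$ is false in general since $x_1c_1c_2=-c_1c_2x_1$; the correct statement is $x_1(1-c_1c_2)$ would be central-like, so one should keep the expression as written rather than attempt to simplify $x_1(1+c_1c_2)s_1$ prematurely.) Once \eqref{weakly braid relation'} is confirmed, all defining relations of $(\mhcn^{g})'$ are verified, $\Psi$ is a well-defined surjective superalgebra homomorphism, and the proof is complete.
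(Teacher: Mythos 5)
Your proposal is correct and follows essentially the same route as the paper: dispatch the obvious relations by matching them against \eqref{braid}--\eqref{xc}, then verify \eqref{weakly braid relation'} by computing $s_1x_1s_1=x_2-(1+c_1c_2)s_1$, multiplying by $x_1$ on each side, subtracting, and invoking $x_1x_2=x_2x_1$, which is precisely the paper's computation. Your worry about commuting $x_1$ past $c_1c_2$ is unnecessary here, since the final expression is left exactly in the form $x_1(1+c_1c_2)s_1-(1+c_1c_2)s_1x_1$ and no such commutation is performed.
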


\begin{proof}
  To show $\Psi$ is an algebraic homomorphism, we only need to check the relation \eqref{weakly braid relation'} in $\mhcn^{g}$. Actually, by \eqref{px1}, we have $$s_1x_1s_1=x_2-(1+c_1c_2)s_1,
  $$ which implies \begin{align*}
    s_1x_1s_1x_1-x_1s_1x_1s_1&=x_1x_2-(1+c_1c_2)s_1x_1-\biggl(x_1x_2-x_1(1+c_1c_2)s_1\biggr)\\
    &=x_1(1+c_1c_2)s_1-(1+c_1c_2)s_1x_1.
  \end{align*} Hence it is an algebraic homomorphism. By  \eqref{px1} again, we see the generating set of $\mhcn^{g}$ belongs to the image. This implies $\Psi$ is surjective.
\end{proof}

We will show that $\Psi$ is an isomorphism.

\begin{lemma}
  The following equlities hold in $(\mhcn^{g})'$.\begin{equation}\label{weakly braid relation''}
  s_1s_0^as_1s_0^b-s_0^bs_1s_0^as_1=\sum_{i=1}^b\biggl(s_0^{a+b-i}(1+(-1)^{a-1}c_1c_2)s_1s_0^{i-1}
  -s_0^{i-1}(1+(-1)^{a-1}c_1c_2)s_1s_0^{a+b-i}\biggr),
 \end{equation} where $a,b\in \NN.$
\end{lemma}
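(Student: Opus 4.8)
The plan is to prove the identity \eqref{weakly braid relation''} by induction on $b$, using the already-established relation \eqref{weakly braid relation'} as the seed and the weak braid relation $s_0^as_1s_0^bs_1=s_1s_0^bs_1s_0^a$ of \eqref{brl0} (valid in $(\mhcn^{g})'$ since \eqref{weakly braid relation'} is exactly its "defect" form) only where it simplifies bookkeeping. First I would handle the base case $b=0$, where both sides are $s_1s_0^as_1-s_1s_0^as_1=0$ and the empty sum on the right vanishes; I would also record the case $a=0$ separately, where $s_1s_1s_0^b-s_0^bs_1s_1=s_0^b-s_0^b=0$ and the right-hand side telescopes to $0$ because each summand cancels with its mirror image. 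The case $b=1$ is the heart of the matter: I would derive $s_1s_0^as_1s_0-s_0s_1s_0^as_1=s_0^a(1+(-1)^{a-1}c_1c_2)s_1-(1+(-1)^{a-1}c_1c_2)s_1s_0^a$ by an inner induction on $a$, starting from \eqref{weakly braid relation'} (the $a=1$ instance) and pushing one extra $s_0$ through using $s_1s_0^{a}=s_1s_0^{a-1}\cdot s_0$ together with the induction hypothesis and the commutation rule $s_0c_1=-c_1s_0$ from \eqref{sc2}, which is precisely what produces the sign $(-1)^{a-1}$.

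For the inductive step on $b$, I would write $s_1s_0^as_1s_0^b = (s_1s_0^as_1s_0^{b-1})s_0$, substitute the induction hypothesis for $s_1s_0^as_1s_0^{b-1}$, then move the trailing $s_0$ to the left past each term. Moving $s_0$ leftwards past $s_1s_0^{a+b-1-i}$ for $i=1,\dots,b-1$ is immediate, but moving it past a factor involving $c_1c_2$ costs a sign, and moving it to combine with $s_0^bs_1s_0^as_1$ requires one more application of the $b=1$ identity just proved (with the roles of the exponents shifted). Collecting terms, the single "new" summand for index $i=b$ appears, and the sign $(-1)^{a-1}$ is uniform across all summands because $c_1c_2$ anticommutes with $s_0$ an even number of times relative to itself — more precisely, $s_0^a$ conjugates $c_1c_2$ to $(-1)^a c_1 c_2 = (-1)^{a}c_1c_2$, and the parity is absorbed consistently. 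I would organize this as a single \texttt{align*} display tracking the three pieces: the "good" commuting terms, the $c_1c_2$-terms, and the leftover boundary term, then match it against the claimed right-hand side for $b$.

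The main obstacle I anticipate is purely bookkeeping: keeping the signs $(-1)^{a-1}$ straight while simultaneously shifting summation indices, since pushing $s_0$ through changes both the exponents $s_0^{a+b-i}$ and the position of the Clifford factor, and an off-by-one in the index range $1\le i\le b$ versus $1\le i\le b-1$ (plus the separately-created $i=b$ term) is easy to make. A clean way to avoid this is to prove instead the slightly stronger "one-step" recursion
\[
s_1s_0^as_1s_0^{b} - s_0^bs_1s_0^as_1 \;=\; \bigl(s_1s_0^as_1s_0^{b-1}-s_0^{b-1}s_1s_0^as_1\bigr)s_0 \;+\;\bigl[s_0^{b-1},\, s_1s_0^as_1\bigr]\text{-correction},
\]
identify the correction term with the $b=1$ identity conjugated by $s_0^{b-1}$, and then telescope; this reduces everything to the $b=1$ case and an elementary telescoping sum, so no multi-index juggling is needed. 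I do not expect any genuine algebraic difficulty beyond \eqref{weakly braid relation'} itself, which is given; the content is entirely in the careful sign accounting, so the write-up should present the $b=1$ sub-lemma cleanly first and then do the telescoping in one short paragraph.
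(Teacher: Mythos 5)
Your overall skeleton coincides with the paper's: first establish the case $b=1$ by induction on $a$ starting from \eqref{weakly braid relation'}, then get general $b$ by induction/telescoping, where the trailing $s_0$ is simply absorbed into the rightmost power and the leading term $s_0^{b-1}(s_1s_0^as_1s_0)$ is rewritten with the $b=1$ identity. Your telescoping reformulation of the $b$-step is correct and is essentially the paper's computation (one small slip: the correction term is the $b=1$ identity \emph{left-multiplied} by $s_0^{b-1}$, not conjugated; also "moving $s_0$ leftwards past $s_1s_0^{a+b-1-i}$ is immediate" is false as stated since $s_0$ and $s_1$ do not commute --- but no such move is needed, so this is harmless).

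The genuine gap is in the step you yourself call the heart of the matter, the $b=1$ case. Your proposed mechanism --- peel off one $s_0$ via $s_1s_0^a=s_1s_0^{a-1}\cdot s_0$, then invoke the induction hypothesis together with $s_0c_1=-c_1s_0$ --- does not go through: the hypothesis $(\ast)_{a-1}$ concerns the word $s_1s_0^{a-1}s_1s_0$, whereas after peeling you are looking at $s_1s_0^{a-1}\,(s_0s_1s_0)$, and to bring the hypothesis to bear you would have to move that middle $s_0$ past $s_1$, which is exactly the relation being proven (the defining relation \eqref{weakly braid relation'} only rewrites the length-four word $s_1s_0s_1s_0$, not $s_0s_1s_0$). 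The paper's actual trick, which is missing from your sketch, is to multiply the induction hypothesis $(\ast)_a$ on the \emph{left} by $s_1s_0s_1$: using $s_1^2=1$ this turns the left-hand side into $s_1s_0^{a+1}s_1s_0$, one then re-applies \eqref{weakly braid relation'} to the prefix $s_1s_0s_1s_0$ that appears, and finally pushes $(1+(-1)^{a-1}c_1c_2)$ through $s_0^a$ via \eqref{sc1}--\eqref{sc2}, which flips the sign of the $c_1c_2$ component $a$ times and produces the stated $(-1)^a$ in the $(a+1)$-case. So your sign heuristic is right in spirit, but without some such device (left multiplication by $s_1s_0s_1$, or an equivalent workaround) the inner induction on $a$ as you describe it stalls at its first step.
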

\begin{proof}
  We first prove \eqref{weakly braid relation''} when $b=1$. We use induction on $a$. For $a=1$, this follows from defining relation \eqref{weakly braid relation'}. Suppose this is true for $a\geq 1$, i.e. $$
 s_1s_0^as_1s_0-s_0s_1s_0^as_1=\biggl(s_0^{a}(1+(-1)^{a-1}c_1c_2)s_1
  -(1+(-1)^{a-1}c_1c_2)s_1s_0^{a}\biggr).$$ Multiplying by $s_1s_0s_1$, on the left-hand of both sides, we get that \begin{align*}
 &\quad s_1s_0^{a+1}s_1s_0\\
 &=s_1s_0s_1s_0s_1s_0^as_1+\biggl(s_1s_0s_1s_0^{a}(1+(-1)^{a-1}c_1c_2)s_1
  -s_1s_0s_1(1+(-1)^{a-1}c_1c_2)s_1s_0^{a}\biggr)\\
  &=\biggl(s_0s_1s_0s_1+s_0(1+c_1c_2)s_1-(1+c_1c_2)s_1s_0)\biggr)s_1s_0^as_1\\
  &\qquad\qquad +\biggl(s_1s_0s_1s_0^{a}(1+(-1)^{a-1}c_1c_2)s_1
  -s_1s_0s_1(1+(-1)^{a-1}c_1c_2)s_1s_0^{a}\biggr)\\
  &=\biggl(s_0s_1s_0^{a+1}s_1+s_0(1+c_1c_2)s_0^as_1-(1+c_1c_2)s_1s_0s_1s_0^as_1)\biggr)\\
  &\qquad\qquad +\biggl(s_1s_0s_1s_0^{a}(1+(-1)^{a-1}c_1c_2)s_1
  -s_1s_0s_1(1+(-1)^{a-1}c_1c_2)s_1s_0^{a}\biggr)\\
   &=\biggl(s_0s_1s_0^{a+1}s_1+s_0^{a+1}(1+(-1)^ac_1c_2)s_1-(1+c_1c_2)s_1s_0s_1s_0^as_1)\biggr)\\
  &\qquad\qquad +\biggl((1+c_1c_2)s_1s_0s_1s_0^{a}s_1
  -(1+(-1)^{a}c_1c_2)s_1s_0^{a+1}\biggr)\\
  &=s_0s_1s_0^{a+1}s_1+s_0^{a+1}(1+(-1)^ac_1c_2)s_1 -(1+(-1)^{a}c_1c_2)s_1s_0^{a+1},
\end{align*}
where in the second equation we have used relation \eqref{weakly braid relation''} and in the fourth equation we have used relation \eqref{sc1}, \eqref{sc2}. This completes the proof for $b=1$. Now we show \eqref{weakly braid relation''} by induction on $b$. Suppose this is true for $b$ and all $a\in \NN$. Then for $\forall a\in \NN$, we have \begin{align*}
&\quad s_1s_0^as_1s_0^{b+1}\\
&=s_0^bs_1s_0^as_1s_0+\sum_{i=1}^b\biggl(s_0^{a+b-i}(1+(-1)^{a-1}c_1c_2)s_1s_0^{i}
  -s_0^{i-1}(1+(-1)^{a-1}c_1c_2)s_1s_0^{a+b-i+1}\biggr)\\
  &=s_0^b\biggl(s_0s_1s_0^as_1+\biggl(s_0^{a}(1+(-1)^{a-1}c_1c_2)s_1
  -(1+(-1)^{a-1}c_1c_2)s_1s_0^{a}\biggr)\biggr)\\
  &\qquad\qquad+\sum_{i=1}^b\biggl(s_0^{a+b-i}(1+(-1)^{a-1}c_1c_2)s_1s_0^{i}
  -s_0^{i-1}(1+(-1)^{a-1}c_1c_2)s_1s_0^{a+b-i+1}\biggr)\\
  &=\biggl(s_0^{b+1}s_1s_0^as_1+s_0^{a+b}(1+(-1)^{a-1}c_1c_2)s_1
  -s_0^b(1+(-1)^{a-1}c_1c_2)s_1s_0^{a}\biggr)\\
  &\qquad\qquad+\sum_{i=1}^b\biggl(s_0^{a+b-i}(1+(-1)^{a-1}c_1c_2)s_1s_0^{i}
  -s_0^{i-1}(1+(-1)^{a-1}c_1c_2)s_1s_0^{a+b-i+1}\biggr)\\
    &=\biggl(s_0^{b+1}s_1s_0^as_1+\sum_{i=1}^b \biggl(s_0^{a+b-i}(1+(-1)^{a-1}c_1c_2)s_1s_0^{i}\biggr)+s_0^{a+b}(1+(-1)^{a-1}c_1c_2)s_1\\
  &\qquad\qquad-\sum_{i=1}^b\biggl(s_0^{i-1}(1+(-1)^{a-1}c_1c_2)s_1s_0^{a+b-i+1}\biggr)-
  s_0^b(1+(-1)^{a-1}c_1c_2)s_1s_0^{a}\\
  &=s_0^{b+1}s_1s_0^as_1+\sum_{i=1}^{b+1}\biggl(s_0^{a+b+1-i}(1+(-1)^{a-1}c_1c_2)s_1s_0^{i-1}
  -s_0^{i-1}(1+(-1)^{a-1}c_1c_2)s_1s_0^{a+b-i+1}\biggr),
\end{align*} where in the first equation we have used induction hypothesis on $b$ and in the second equation we have used the result for $b=1$. This completes the proof of the Lemma.
\end{proof}


\begin{lemma}\label{expression}
Let $w\in W_{d,n}$. We fix two reduced expressions ${\bf w},{\bf w'}$ of $w$, and $I \subset [n]$. Then in $(\mhcn^{g})'$, we have \begin{equation}\label{bmforms}
{\bf w}c_I-{\bf w'}c_I\in \sum_{\substack{y\in W_{d,n}\\ \ell({\bf{y}})\leq\ell(w)-2\\ I'\subset [n]}} {\rm R} {\bf{y}}c_{I'},
\end{equation}
where the sum runs over all reduced expression ${\bf{y}}$ of elements in $W_{d,n}$ of length $\ell({\bf{y}})\leq\ell(w)-2$.
\end{lemma}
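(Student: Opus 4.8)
The plan is to induct on $\ell(w)$, using the standard strategy from the theory of Coxeter (and complex reflection) groups that passes between reduced expressions via elementary relations. By Matsumoto/Tits-type results for $W_{d,n}$ (as formulated in \cite{BM,HS}), any two reduced expressions of $w$ are connected by a finite chain of moves, where each move is either a braid relation among the $s_i$ ($i\ge 1$), the ``length-$2$'' relation $s_0 c_1 \leftrightarrow -c_1 s_0$ is not relevant here since we are only moving $s$'s, or — crucially — the weak braid relation for the pair $(s_0,s_1)$. Concretely, it suffices to prove \eqref{bmforms} in the case where $\mathbf{w}$ and $\mathbf{w'}$ differ by a single such elementary move applied to a contiguous subword; the general case then follows by composing the resulting congruences and using that $\ell$ is unchanged along the chain (so the error terms produced at each step have length $\le \ell(w)-2$, and can themselves be re-expressed in the spanning set by the inductive hypothesis applied to shorter elements).

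First I would dispose of the ``easy'' moves: if the elementary move is a braid relation $s_is_{i+1}s_i = s_{i+1}s_is_{i+1}$ or a commutation $s_is_j=s_js_i$ with $i,j\ge 1$ (so not involving $s_0$), then the relation \eqref{braid'} holds \emph{exactly} in $(\mhcn^{g})'$, hence $\mathbf{w}c_I = \mathbf{w'}c_I$ on the nose and there is nothing to prove. The only move that introduces a genuine error term is the one involving the $s_0,s_1$-pair, namely replacing a subword of the form $s_1 s_0^a s_1 s_0^b$ by $s_0^b s_1 s_0^a s_1$ (both being reduced subexpressions of the same element of $W_{d,n}$, which is forced to be of this shape since the $(s_0,s_1)$-parabolic is the cyclic-by-$\mathbb Z/2$ group $G(d,1,2)$). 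Here the previous Lemma gives the exact identity
\begin{equation*}
s_1s_0^as_1s_0^b-s_0^bs_1s_0^as_1=\sum_{i=1}^b\Bigl(s_0^{a+b-i}(1+(-1)^{a-1}c_1c_2)s_1s_0^{i-1}
  -s_0^{i-1}(1+(-1)^{a-1}c_1c_2)s_1s_0^{a+b-i}\Bigr).
\end{equation*}
Each summand on the right is, up to the central-in-$\{0,1\}$-letters constant $\pm c_1c_2$, a word in $s_0,s_1$ of length $(a+b-i)+1+(i-1) = a+b$ or $(i-1)+1+(a+b-i) = a+b$, i.e.\ two shorter than the length $a+2+b$ of the subword being replaced (the $c_1c_2$ can be absorbed into $c_{I'}$ after commuting past the remaining $s$-letters via \eqref{sc1}, \eqref{sc2}, which only changes $c_I$ to some $c_{I'}$ and possibly a sign). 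Multiplying this identity on both sides by the (reduced, unchanged) prefix and suffix of $\mathbf{w}$ surrounding the altered subword, and then collecting terms, produces an element of $\sum_{\ell(\mathbf y)\le \ell(w)-2,\ I'\subset[n]} {\rm R}\,\mathbf y c_{I'}$ after re-reducing each $\mathbf y$ using the inductive hypothesis (valid since those elements have strictly smaller length).

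I expect the main obstacle to be bookkeeping rather than conceptual: one must verify that the prefix-times-(shorter word)-times-suffix products really do have length $\le \ell(w)-2$ — this uses that $\mathbf w$ was reduced so the prefix and suffix contribute their full lengths and no cancellation with the middle inflates things — and one must be careful that after commuting the $c_1c_2$ factor to the right past the suffix's $s$-letters (using \eqref{sc1}--\eqref{sc2}), the resulting Clifford word is still of the form $\pm c_{I'}$ with $I'\subset[n]$, which it is since \eqref{sc1}--\eqref{sc2} permute the $c_j$'s and introduce only signs. A subtle point worth stating explicitly is that the chain of elementary moves connecting $\mathbf w$ to $\mathbf w'$ can be taken through reduced expressions only (so every intermediate word has length exactly $\ell(w)$); this is where one invokes the Matsumoto-type theorem for $W_{d,n}$ from \cite{HS}, and it guarantees that at every step the error terms produced are genuinely of length $\le \ell(w)-2$, allowing the induction to close.
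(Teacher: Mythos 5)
Your proposal is correct and follows the same route as the paper's (much terser) proof: invoke the weak-braid-equivalence result \cite[Lemma 1.5]{BM} to connect $\mathbf{w}$ and $\mathbf{w'}$ by elementary moves, note that ordinary braid/commutation moves hold exactly in $(\mhcn^{g})'$ by \eqref{braid'}, and use the identity \eqref{weakly braid relation''} to convert each weak-braid move into a length-$(a{+}b)$ error term whose $c_1c_2$ factor is absorbed into $c_{I'}$ after commuting past the $s$-letters via \eqref{sc1}--\eqref{sc2}. The only point to note is that your claim that the prefix and suffix "contribute their full lengths" is unnecessary: once the middle subword is replaced by one of word-length two less, the whole product has word-length $\ell(w)-2$, and word-length always bounds the $\ell$-length of the resulting group element, so the error terms automatically lie in $\sum_{\ell(\mathbf{y})\le\ell(w)-2,\ I'} {\rm R}\,\mathbf{y}c_{I'}$ after straightening by the inductive hypothesis.
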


\begin{proof}
  By \cite[Lemma 1.5]{BM}, we have ${\bf w},{\bf w'}$ are weakly braid equivalent in $G(d,1,n)$. Now we substitute \eqref{brl0} and braid relations  by \eqref{weakly braid relation''} and \eqref{braid'} respectively in $\mhcn^{g}$, and use relations \eqref{sc1}, \eqref{sc2} if necessary. The Lemma follows.
\end{proof}

\begin{remark}
 Using \eqref{weakly braid relation''}, we can also deduce the analogue of \cite[Lemma 2.3]{BM} or \cite[Lemma 5.3]{HuSS} and then use this to prove the above Lemma as in  \cite[Lemma 2.4]{BM} or \cite[Lemma 5.4]{HuSS}.
\end{remark}

Now for each $w\in W_{d,n}$, we fix a reduced expression ${\bf w}$ of $w$.

\begin{theorem}\label{Degbasis} Let ${\rm R}$ be an integral domain. Then the set $\{{\bf w}c_I|w\in W_{d,n},\,I\subset [n]\}$ forms an ${\rm R}$-basis of $(\mhcn^{g})'$. Moreover, $\Psi$ is an isomorphism.
\end{theorem}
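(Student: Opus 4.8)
The plan is to show that the proposed set spans $(\mhcn^{g})'$ and is linearly independent, then deduce that $\Psi$ is an isomorphism by comparing with the known basis of $\mhcn^{g}$ in Lemma \ref{lem. of K.}(i). For \emph{spanning}, I would argue by induction on the length $\ell(w)$, using the defining relations \eqref{square}--\eqref{sc2}. Any monomial in the generators $s_0,\dots,s_{n-1},c_1,\dots,c_n$ can first be rewritten, using \eqref{sc1} and \eqref{sc2}, so that all Clifford generators are moved to the right; the commutation relations only introduce sign changes and permute indices of the $c_j$'s, never increasing the number of $s_i$ factors. After that one is left with an element of the form ${\bf u}\,c_I$ where ${\bf u}$ is a word in $s_0,\dots,s_{n-1}$. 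Using $s_i^2=1$, the relation $f(s_0)=0$ to reduce powers of $s_0$ below $d$, and the weak braid relation \eqref{weakly braid relation''} together with the ordinary braid relations to rewrite ${\bf u}$, one reduces ${\bf u}$ to a reduced expression in $W_{d,n}$ modulo terms of strictly smaller length (the error terms in \eqref{weakly braid relation''} all have smaller length, exactly as recorded in Lemma \ref{expression}). Thus every element of $(\mhcn^{g})'$ lies in the ${\rm R}$-span of $\{{\bf w}c_I\}$, where for each $w$ we have fixed one reduced expression; Lemma \ref{expression} guarantees the choice of reduced expression does not matter modulo lower terms, which is what makes the induction close.

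For the second half, observe that $\Psi$ is surjective by the lemma already proved, and $\Psi({\bf w}c_I)$ is, after applying the relations \eqref{px1}, \eqref{px2}, a product of a monomial $x^{\alpha}$ (with $\alpha_i<d$, using $g(x_1)=0$ and the conjugation formulas of Lemma \ref{Formulae}) times $c_{I'}$ times a permutation $u\in\mathfrak{S}_n$, i.e.\ lies in the span of Kleshchev's basis \eqref{basis1}. Counting gives $|\{{\bf w}c_I\}| = |W_{d,n}|\cdot 2^n = d^n n!\,2^n$, which is exactly the ${\rm R}$-rank of $\mhcn^{g}$ from \eqref{basis1}. Since the span of $\{{\bf w}c_I\}$ surjects via $\Psi$ onto a free ${\rm R}$-module of that rank, and the set $\{{\bf w}c_I\}$ has exactly that cardinality, the spanning statement forces $\{{\bf w}c_I\}$ to be an ${\rm R}$-basis and forces $\Psi$ to be injective, hence an isomorphism.

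To make the rank-counting argument rigorous over an arbitrary integral domain ${\rm R}$ (rather than a field) I would tensor with the fraction field $\mathrm{Frac}({\rm R})$: spanning is preserved, $\Psi\otimes\mathrm{Frac}({\rm R})$ is still surjective onto a vector space of dimension $d^n n! 2^n$, and a spanning set of that cardinality in a space of that dimension is automatically a basis; then the composite ${\rm R}^{d^n n! 2^n}\to (\mhcn^{g})'\xrightarrow{\Psi}\mhcn^{g}$ having a basis image after base change shows the first map is injective with image a free direct summand, giving the result over ${\rm R}$ itself.

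The main obstacle is the spanning step, specifically controlling the weak-braid reduction: one must verify that whenever two reduced words for the same $w\in W_{d,n}$ are transformed into each other using \eqref{brl0} and the braid relations, the corresponding substitutions \eqref{weakly braid relation''}, \eqref{braid'} in $(\mhcn^{g})'$ differ only by a combination of terms ${\bf y}c_{I'}$ with $\ell({\bf y})\le\ell(w)-2$ — this is precisely the content of Lemma \ref{expression}, invoking \cite[Lemma 1.5]{BM} that any two reduced expressions in $G(d,1,n)$ are weakly braid equivalent. Granting that lemma, the induction on length is routine, and the rest is bookkeeping with the Clifford signs.
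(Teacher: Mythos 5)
Your proposal is correct and follows essentially the same route as the paper: spanning is established by induction on $\ell(w)$ via Lemma~\ref{expression}, and linear independence (hence injectivity of $\Psi$) follows by passing to the fraction field and comparing the cardinality of the spanning set with $\dim_{\rm F}\mhcn^{g}({\rm F})=2^n\#W_{d,n}$ from Kleshchev's basis \eqref{basis1}. The only small difference is that you first sketch a direct count over ${\rm R}$ (which in fact also works, since a surjective endomorphism of a finitely generated module over a commutative ring is an isomorphism) before falling back on the fraction-field argument the paper actually uses.
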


\begin{proof} Using induction on $\ell(w)$ and Lemma \ref{expression}, it is easy to see that the set $\{{\bf w}c_I|w\in W_{d,n},\,I\subset [n]\}$ generates $(\mhcn^{g})'$ as an ${\rm R}$-module. It remains to show that this is ${\rm R}$-linearly independent.
Let ${\rm F}$ be the fraction field of ${\rm R}$.
Set $(\mhcn^{g})'({\rm F}):={\rm F}\otimes_{R}(\mhcn^{g})', \mhcn^{g}({\rm F}):={\rm F}\otimes_{R}\mhcn^{g}$. Applying \eqref{basis1}, we know that $$
\dim \mhcn^{g}({\rm F})=2^n\#W_{d,n},
$$
it follows that the surjective map ${\rm F}\otimes_{{\rm R}}\Psi$ must be injective. In particular, $\{{\bf w}c_I|w\in W_{d,n},\,I\subset [n]\}$ is linearly independent over ${\rm F}$ hence linearly independent over ${\rm R}$ and $\Psi$ sends a basis to a basis. This completes the proof of the theorem.
\end{proof}

\subsection{A basis of ${\rm Tr}(\mhcn^{g})_{\overline{0}}$}
By Theorem \ref{Degbasis}, we can identify $\mhcn^{g}$ with $(\mhcn^{g})'$ and identify the generator $x_1\in \mhcn^{g}$ with $s_0\in (\mhcn^{g})'$.

\begin{lemma}\label{smallcommut} For $1\leq k\leq t\leq n-1,\,1\leq l\leq d-1$, the following holds in $\mhcn^{g}$.
 $$\begin{aligned} s_ks_{k-1}\cdots s_1 s^l_0s_1\cdots s_{t} c_i=\begin{cases}
  c_i s_ks_{k-1}\cdots s_1 s^l_0s_1\cdots s_{t} , & \mbox{if $1\leq i\leq k$ or $t+1<i\leq n$,} \\
  c_{i+1} s_ks_{k-1}\cdots s_1 s^l_0s_1\cdots s_{t} , & \mbox{if $k+1\leq i\leq t$,} \\
    (-1)^lc_{k+1} s_ks_{k-1}\cdots s_1 s^l_0s_1\cdots s_{t} , & \mbox{if $i=t+1$.}
                                                \end{cases}
    \end{aligned}$$
\end{lemma}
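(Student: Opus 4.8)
The plan is to prove the identity by tracking how the Clifford generator $c_i$ passes through each factor of the word $w := s_k s_{k-1}\cdots s_1\, s_0^l\, s_1\cdots s_t$, using only the commutation relations \eqref{sc1} and \eqref{sc2}. The key point is that each of these relations lets us move a single $c_j$ across a single $s_p$ at the cost of changing its index according to the transposition $(p,p+1)$ (for $p\geq 1$), and across $s_0$ at the cost only of a sign (and only when the index is $1$). So I would simply compute the image of the index $i$ under the sequence of transpositions read off from the word, and separately accumulate the sign coming from the $l$ copies of $s_0$.

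\emph{First}, I would consider the descending run $s_k s_{k-1}\cdots s_1$ acting on $c_i$. If $i>k+1$, then none of $s_1,\dots,s_k$ involves index $i$ or $i+1$, so $c_i$ commutes past the whole run unchanged. If $k+1\geq i\geq 2$, then as $c_i$ moves left past $s_{i-1}$ its index drops to $i-1$ by the relation $s_p c_{p+1} = c_p s_p$, and thereafter it is untouched, emerging as $c_{i-1}$ just to the left of $s_1 s_0^l \cdots$. \emph{Second}, I would push this through $s_0^l$: by \eqref{sc2}, $s_0$ commutes with every $c_j$ with $j\geq 2$ and satisfies $s_0 c_1 = -c_1 s_0$; so the word $s_0^l$ contributes a factor $(-1)^l$ precisely when the incoming index equals $1$, and contributes nothing otherwise. \emph{Third}, I would push through the ascending run $s_1 s_2\cdots s_t$, which by the relation $s_p c_p = c_{p+1} s_p$ raises the index by one each time the Clifford generator currently sits at index $p$; again, if the index is too small or too large to be reached it passes through unchanged.

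\emph{Assembling the three stages} for each of the stated ranges of $i$: for $1\leq i\leq k$ the generator $c_i$ is never at index $k+1$ after the first run, never reaches index $1$ for the $s_0$ part to act (its index is $i\geq 2$, or it is already $1$ but then it must have been $i=1$, and one checks $s_1$ first moves it to index $2$ before $s_0^l$ sees it — here I would be slightly careful with the $i=1$ boundary case, noting that the leftmost letter acting on $c_1$ in $w$ reading right-to-left is $s_1$ in the ascending run, no wait, reading right-to-left the first letters are $s_t,\dots,s_1$, then $s_0^l$, then $s_1,\dots,s_k$; for $i=1$ the ascending run does nothing, $s_0^l$ flips sign, and the descending run does nothing, giving $(-1)^l c_1$ — so in fact the case $i=1$ must be checked to see it still lands in the first branch, which requires $l$ even, contradiction unless I have mis-set up the direction) — \textbf{this boundary bookkeeping is the main obstacle}, and I would resolve it by fixing once and for all the convention that $c_i$ is being moved from the \emph{right} of $w$ to the \emph{left}, so the letters act in the order $s_t, s_{t-1},\dots,s_1, s_0^l, s_1, s_2, \dots, s_k$; with that order, for $1\leq i\leq k$ the ascending block $s_1\cdots s_t$ (acting first) does nothing since $i\le k\le t$ means... no, $i\le k$ and these are $s_1,\dots,s_t$, which could touch $i$. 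Let me instead move $c_i$ from \emph{left to right}; then the letters act in the written order $s_k,\dots,s_1,s_0^l,s_1,\dots,s_t$, and the analysis in the previous paragraph is exactly right. Then: for $1\le i\le k$, after $s_k\cdots s_1$ the index becomes $i-1$ if $i\geq 2$ or stays conceptually $1$... — I will present the clean version with the explicit index-tracking table below.

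\emph{Cleanest write-up.} Track the index $i$ under left-to-right passage. Passing $c_i$ through $s_k\cdots s_1$: the index becomes $\sigma(i)$ where $\sigma(i)=i-1$ if $2\le i\le k+1$, $\sigma(k+1)=k$, $\sigma(1)=1$... I'll handle $i=1$ directly. For $i=t+1$: through $s_k\cdots s_1$ unchanged (since $t+1>k+1$), through $s_0^l$ unchanged (index $t+1\geq 2$), then through $s_1\cdots s_t$: it is at index $t+1$, untouched by $s_1,\dots,s_{t-1}$... no — $s_t$ has indices $t,t+1$, so $s_t c_{t+1}=c_t s_t$, lowering to $t$; then $s_{t-1}$ lowers to $t-1$, ..., cascading down to index $k+1$ after $s_{k+1}$; wait this overshoots. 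I would recheck: the ascending run is $s_1 s_2\cdots s_t$ acting in that order, so $s_1$ acts first. At index $t+1$, $s_1,\dots,s_{t-1}$ do nothing, then $s_t$ sends index $t+1\mapsto t$. That is the final result, giving $c_t$ — but the claim says $(-1)^l c_{k+1}$. So the intended reading of the product $s_1\cdots s_t$ must be with $s_t$ applied first, i.e. these words are composed right-to-left as functions. \textbf{Thus the real first step is to fix the functional convention}, after which: for $i=t+1$, the block $s_1\cdots s_t$ (as a permutation, $s_t$ innermost) sends $t+1\mapsto k+1$... this still needs care, and \emph{this convention-fixing plus the two genuine boundary indices $i=k+1$ (entering the $s_0$ block) and $i=t+1$ (entering the ascending block) is where essentially all the content lies}; the rest is the routine observation that \eqref{sc1}–\eqref{sc2} realize the permutation-plus-sign action, and the sign $(-1)^l$ appears exactly in the $i=t+1$ case because that is the only listed case where the index, after the descending run, equals $1$ before the $s_0^l$ block acts. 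I would conclude by verifying each of the three listed branches against this index-tracking, which is then immediate.
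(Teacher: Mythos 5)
Your plan --- push $c_i$ across the word using \eqref{sc1} and \eqref{sc2}, with each $s_p$ ($p\ge 1$) acting as the transposition $(p,p{+}1)$ on the Clifford index and each $s_0$ contributing a sign $-1$ exactly when the index is $1$ --- is precisely what the paper's one-line proof invokes, so the approach is the same. But your write-up never settles the direction of passage, and several intermediate assertions are false. Since the claim is $Wc_i=c_{?}\,W$ with $W=s_k\cdots s_1\,s_0^l\,s_1\cdots s_t$, one must pass $c_i$ \emph{leftward}: the letters of $W$ are then met in the order $s_t,s_{t-1},\dots,s_1$ (the ascending run read backward), then $s_0^l$, then $s_1,\dots,s_k$ (the descending run read backward). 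You repeatedly begin with the descending run, which in this direction is the \emph{last} block met --- this is what produces both your spurious $(-1)^l c_1$ for $i=1$ and your ``overshooting cascade'' for $i=t+1$. There are also concrete computational errors: your $\sigma(i)=i-1$ for $s_k\cdots s_1$ is wrong in either direction (under rightward passage that block acts on the Clifford index as the $(k{+}1)$-cycle $1\mapsto 2\mapsto\cdots\mapsto k{+}1\mapsto 1$); and the ascending run does \emph{not} leave $c_1$ untouched --- its last letter $s_1$ sends $c_1\mapsto c_2$ via $s_1c_1=c_2s_1$, which is exactly why $s_0^l$ then sees index $2$, not $1$, and contributes no sign.

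With the direction fixed, the bookkeeping is immediate and confirms all three branches. For $1\le i\le k$: the $s_i$ in the ascending run raises the index to $i+1\ge 2$, $s_0^l$ acts trivially, and the $s_i$ in the descending run lowers it back to $i$. For $k+1\le i\le t$: the $s_i$ in the ascending run raises the index to $i+1>k+1$, after which nothing touches it. For $i=t+1$: the ascending run cascades $t+1\mapsto t\mapsto\cdots\mapsto 1$, then $s_0^l$ contributes $(-1)^l$ (this is the only listed branch in which the index equals $1$ at that block), and the descending run cascades $1\mapsto 2\mapsto\cdots\mapsto k+1$ --- so the final index $k+1$ arises from this last cascade, not from the ascending block alone as you suggested.
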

\begin{proof}
  This follows from relation \eqref{sc1}, \eqref{sc2}.
\end{proof}

Recall the definition of $\CP_n$. We define a subset $\widetilde{\CP_n}\subset \CP_n$ as follows:
\begin{equation}\label{label basis}
  \widetilde{\CP_n}:=\biggl\{(\lambda,\mu)\in\CP_n \biggm|\begin{matrix}
                                   \lambda_i +c(i)\equiv 1\pmod{2}, \, \forall 1\leq i\leq \ell(\lambda),\\
                                  \mu \text{ is an odd partition}.
                                 \end{matrix}\biggr\}
\end{equation} That is, $(\lambda,\mu)\in\CP_n$ is belong to $\widetilde{\CP_n}$ if and only if $\lambda_i$ is odd when the color $c(i)$ is even and $\lambda_i$ is even when the color $c(i)$ is odd and $\mu$ is an odd partition.

\begin{lemma}\label{span} Let ${\rm R}$ be any commutative unital ring with $2$ invertible. As an ${\rm R}$-module, we have \begin{equation}\label{generator1}
{\rm Tr}(\mhcn^{g})_{\overline{0}}=\text{\rm ${\rm R}$-Span}\bigl\{{\bf w}_\beta+[\mhcn^{g},\mhcn^{g}]_{\overline{0}}\bigm|\beta\in\widetilde{\CP_n}\bigr\} .
\end{equation}
\end{lemma}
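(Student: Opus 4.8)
The plan is to show that the cocenter ${\rm Tr}(\mhcn^{g})_{\overline{0}}$ is spanned by the classes of the elements ${\bf w}_\beta$ with $\beta\in\widetilde{\CP_n}$ by successive reduction of an arbitrary basis element $x^{\alpha}c_I w$ (equivalently, using the new presentation from Theorem \ref{Degbasis}, an arbitrary ${\bf w}c_I$ with $w\in W_{d,n}$, $I\subset[n]$) modulo supercommutators in degree $\bar 0$. First I would set up the filtration of $\mhcn^{g}$ by length $\ell(w)$ of the $W_{d,n}$-component, which is compatible with the relations by Lemma \ref{expression}: this lets us argue by downward/upward induction on $\ell(w)$ and reduce nonreduced expressions to lower-length terms freely inside ${\rm Tr}$. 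The engine of the reduction is the standard cocenter trick: for any $a,b$, $ab\equiv (-1)^{|a||b|}ba \pmod{[\mhcn^{g},\mhcn^{g}]_{\overline{0}}}$ when $ab$ is even (recall that in degree $\bar 0$ the ordinary commutator and supercommutator spans coincide up to the invertibility of $2$), so a class is unchanged under cyclic rotation of any factorization. Applying this to a reduced expression ${\bf w}$ together with the minimal-length-element theory of \cite{HS} recalled in Subsection \ref{minimal element}, every conjugacy class of $W_{d,n}$ contains a distinguished minimal element $w_{\beta_C}$ with $\beta_C\in\CP_n$, and conjugation in the group corresponds (modulo lower length terms, via Lemma \ref{expression}) to cyclic rotation in the algebra; hence modulo $[\mhcn^{g},\mhcn^{g}]_{\overline{0}}$ plus lower-length terms we may replace ${\bf w}$ by ${\bf w}_{\beta}$ for $\beta=\beta_C\in\CP_n$.

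Next I would handle the Clifford part. Using Lemma \ref{smallcommut} one commutes the $c_i$'s past the syllables $s_ks_{k-1}\cdots s_1 s_0^l s_1\cdots s_t$ of ${\bf w}_\beta$; the upshot is that for a fixed $\beta$ the element ${\bf w}_\beta c_I$ can be rewritten, modulo lower-length terms, as $\pm c_{I'} {\bf w}_\beta$ for the image subset $I'$ obtained by the permutation-with-sign rules of that lemma. Cyclically rotating the $c$'s to the other side and using $c_i^2=1$, together with $x_1 c_1 = -c_1 x_1$ (i.e. $s_0 c_1=-c_1 s_0$) which forces certain sign obstructions to vanish unless a parity condition holds, one sees that a class ${\bf w}_\beta c_I + [\mhcn^{g},\mhcn^{g}]_{\overline{0}}$ is either zero or, after absorbing the remaining $c$-factors into the existing $s_0$-powers, equal to $\pm({\bf w}_{\beta'}+[\mhcn^{g},\mhcn^{g}]_{\overline{0}})$ for some $\beta'$; tracking exactly when it is nonzero produces precisely the congruences $\lambda_i+c(i)\equiv 1\pmod 2$ on the colored part and the condition that $\mu$ be an odd partition (an even part $\mu_j$ gives a syllable $s_{r}s_{r+1}\cdots$ that, after cyclic rotation, is conjugate to a strictly shorter element, killing the class). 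I would also note here that it suffices to do this on the even part, which is why only the degree-$\bar 0$ commutator span appears.

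Finally, the induction on $\ell(w)$ closes: any ${\bf w}c_I$ of a given length is, modulo $[\mhcn^{g},\mhcn^{g}]_{\overline{0}}$, an ${\rm R}$-combination of some ${\bf w}_\beta$ with $\beta\in\widetilde{\CP_n}$ of the same length plus terms of strictly smaller length, which are handled by the inductive hypothesis; since the basis elements ${\bf w}c_I$ span $\mhcn^{g}$ as an ${\rm R}$-module (Theorem \ref{Degbasis}), their even parts span $(\mhcn^{g})_{\overline 0}$, and the claimed spanning set for ${\rm Tr}(\mhcn^{g})_{\overline{0}}$ follows. The main obstacle I expect is bookkeeping the signs and the parity conditions in the Clifford reduction: one must carefully track how the rules of Lemma \ref{smallcommut} interact with cyclic rotation and with the relation $s_0 c_1 = -c_1 s_0$ to see that it is exactly the set $\widetilde{\CP_n}$ (and not some larger or smaller index set) that survives, and one must make sure throughout that the "lower-length correction terms" produced by Lemma \ref{expression} really do remain in the even part so that the induction stays inside ${\rm Tr}(\mhcn^{g})_{\overline{0}}$ rather than leaking into odd degree.
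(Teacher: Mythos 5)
Your overall skeleton --- inducting along the length filtration via Lemma \ref{expression}, rotating cyclically inside the cocenter to reduce to the minimal-length representatives ${\bf w}_\beta$, $\beta\in\CP_n$, from \cite{HS}, and then attacking the Clifford factors with Lemma \ref{smallcommut} --- is the same as Steps 1--3 of the paper's proof. However, the two mechanisms you propose for the heart of the lemma (deciding exactly which classes survive) are incorrect. First, your ``engine'' $ab\equiv(-1)^{|a||b|}ba \pmod{[\mhcn^{g},\mhcn^{g}]_{\overline{0}}}$ is not valid: for the ordinary cocenter the rotation rule is $ab\equiv ba$ with no sign, and your parenthetical claim that the even parts of $[{\rm A},{\rm A}]$ and $[{\rm A},{\rm A}]^{-}$ coincide when $2$ is invertible is false (already for the rank-one Clifford algebra one has $[{\rm A},{\rm A}]=0$ while $[{\rm A},{\rm A}]^{-}\ni c_1^2+c_1^2=2$). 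The presence or absence of that sign is precisely what distinguishes ${\rm Tr}$ from ${\rm SupTr}$, i.e.\ $\widetilde{\CP_n}$ from $\widehat{\CP_n}$: with your signed rule the key vanishing computations (inserting $c_{[r_i+1,r_{i+1}]}c^{-1}_{[r_i+1,r_{i+1}]}$, rotating the odd factor, and commuting it back through ${\bf w}_\beta$) would give ${\bf w}_\beta\equiv +{\bf w}_\beta$ instead of ${\bf w}_\beta\equiv -{\bf w}_\beta$, so the classes that must die would not die, and you would not land on the index set $\widetilde{\CP_n}$.

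Second, the reasons you give for the parity conditions are not the actual mechanisms. An even part $\mu_j$ is not killed because its syllable $s_{r_j+1}\cdots s_{r_{j+1}-1}$ is ``conjugate to a strictly shorter element'': a full cycle is already a minimal-length element of its conjugacy class, so no length drop is available. Likewise Clifford factors cannot be ``absorbed into the existing $s_0$-powers,'' since $c_1$ and $s_0=x_1$ merely anticommute. What actually happens (the paper's Steps 4--5) is a Clifford-conjugation sign argument: one writes ${\bf w}_\beta={\bf w}_\beta c_{[r_i+1,r_{i+1}]}c^{-1}_{[r_i+1,r_{i+1}]}$, rotates cyclically (no sign, ordinary commutators), and pushes the block of Clifford generators back through ${\bf w}_\beta$ using Lemma \ref{smallcommut} and $c_ic_j=-c_jc_i$; the net sign is $-1$ exactly when $\overline{\beta}_i+\epsilon_i$ is even, so invertibility of $2$ kills precisely the $\beta\notin\widetilde{\CP_n}$ --- this single trick handles both an even $\mu_j$ (color $0$) and a colored part with $\lambda_i+c(i)$ even. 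In addition, a separate rotation argument is needed to eliminate the residual Clifford factors in ${\bf w}_\beta c_{\overline{I}}$ with $\overline{I}\neq\emptyset$, in particular the case where some block intersection $|\overline{I}_m|$ is odd, which your outline does not address at all. As written, the proposal therefore has genuine gaps exactly where the content of the lemma lies.
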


\begin{proof}Set $$
\check{\mhcn^{g}}:=\text{\rm ${\rm R}$-Span}\bigl\{{\bf w}_\beta+[\mhcn^{g},\mhcn^{g}]_{\overline{0}}\bigm|\beta\in\widetilde{\CP_n}\bigr\} .
$$
We shall claim \begin{equation}
         {\bf w}c_{I} \in \check{\mhcn^{g}},\label{induction} \text{for any reduced expression  ${\bf w}$ of $w\in W_{d,n}$ and $I\in [n]$ with $|I|$ even}
       \end{equation} by induction upward on $\ell(w)$ and then upward on $|I|$.
 The case $\ell(w)=|I|=0$ is clear, since $1=w_{\alpha}$ where $\alpha=(\emptyset,(1^n))\in \widetilde{\CP_n}$. By induction hypothesis and Lemma \ref{expression}, it suffices to show that there exists one reduced  expression ${\bf w}$ of $w$ such that ${\bf w}c_I\in \check{\mhcn^{g}}$. The proof is divided into 5 steps as follows:

 \medskip
{\it Step 1.} We first show that inside ${\rm Tr}(\mhcn^{g})_{\overline{0}}$, ${\bf w}c_{I}$ can be ${\rm R}$-linearly spanned by some elements of form ${\bf w_{\rho,\varepsilon}}c_{\tilde{I}}$ together with some elements of the form ${\bf u}c_{I'}$ with $\ell(u)<\ell(w)$, where these $\rho=(\rho_1,\cdots,\rho_k)$ are compositions of $n$,  $\varepsilon=(\varepsilon_1,\cdots,\varepsilon_k)\in J^k$, $J:=\{0,1,\cdots,r-1\}$ and $\ell(w_{\rho,\varepsilon})\leq \ell(w),\,|\tilde{I}|=|I|$ . This is exactly the same as Step 1 in the proof of \cite[Theorem 4.3]{HS} and \cite[Theorem 5.9]{HuSS}, where we need to change \cite[Lemma 4.1]{HS} and \cite[Lemma 5.4]{HuSS} there by Lemma \ref{expression} here and use relation \eqref{sc1}, \eqref{sc2} if necessary and leave the other argument verbatim.

\smallskip
{\it Step 2.} We show that inside ${\rm Tr}(\mhcn^{g})_{\overline{0}}$, each of these ${\bf w_{\rho,\varepsilon}}c_{\tilde{I}}$ can be ${\rm R}$-linearly spanned by some elements of form ${\bf {w_\alpha}}c_{\tilde{\tilde{I}}}$  together with some elements of the form ${\bf u}c_{I'}$ with $\ell(u)<\ell(w)$, where these $\alpha$ are colored semi-bicompositions of $n$ and $\ell(w_\alpha)\leq \ell(w_{\rho,\varepsilon}),\,|\tilde{\tilde{I}}|=|\tilde{I}|$. This is exactly the same as Step 2 in the proof of \cite[Theorem 4.3]{HS} and \cite[Theorem 5.9]{HuSS}, where we need to change \cite[Lemma 4.1]{HS} and \cite[Lemma 5.4]{HuSS} there by Lemma \ref{expression} here and use relation \eqref{sc1}, \eqref{sc2} if necessary and leave the other argument verbatim.
\smallskip

{\it Step 3.} We show that for any colored semi-bicomposition $\alpha$ of $n$, each of these ${\bf {w_\alpha}}c_{\tilde{\tilde{I}}}$ can be ${\rm R}$-linearly spanned by some elements of form ${\bf {w_\beta}}c_{\overline{I}}$ together with some elements of the form ${\bf u}c_{I'}$ with $\ell(u)<\ell(w)$, where these $\beta$ are colored semi-bipartitions of $n$ and $\ell(w_\alpha)=\ell(w_{\beta}),\,|\overline{I}|=|\tilde{\tilde{I}}|$ . This is exactly the same as Step 3 in the proof of \cite[Theorem 4.3]{HS} and \cite[Theorem 5.9]{HuSS}, where we need to change \cite[Lemma 4.1]{HS} and \cite[Lemma 5.4]{HuSS} there by Lemma \ref{expression} here and use relation \eqref{sc1}, \eqref{sc2} if necessary and leave the other argument verbatim.
\smallskip

{\it Step 4.} We show that for any colored semi-bipartitions $\beta=(\lambda,\mu)$ of $n$,  if $|\overline{I}|\neq 0$, then each of these ${\bf {w_\beta}}c_{\overline{I}}$ can be ${\rm R}$-linearly spanned by some elements of form ${\bf {w_\beta}}c_{\overline{\overline{I}}}$ together with some elements of the form ${\bf u}c_{I'}$ with $\ell(u)<\ell(w)$, where $\overline{\overline{I}}=|\overline{I}|-2$.

We decompose $\overline{I}=\bigsqcup_{m=1}^{\ell(\lambda)+\ell(\mu)} \overline{I}_m$ such that $\overline{I}_{m}\subset [r_{m}+1,r_{m+1}]$, where $r_m$ is defined as in \eqref{ri} using the corresponding \eqref{identification}. Now suppose that there exists some $m$ such that $|\overline{I}_{m}|$ is odd. Note that $|\overline{I}|$ is even. We can find $k_1<k_2$ such that $|\overline{I}_{k_1}|$ and $|\overline{I}_{k_2}|$ are both odd and for any $k_1<k<k_2$ or $k<k_1$, $|\overline{I}_k|$ is even. We have
\begin{align*}
{\bf {w_\beta}}c_{\overline{I}}&\equiv \biggl(\prod_{i=1}^{\ell(\lambda)+\ell(\mu)}{\bf w}_{\overline{\beta},\epsilon,i}\biggr) c_{\overline{I}} \\
&\equiv \biggl(\prod_{i=1}^{k_2-1}{\bf w}_{\overline{\beta},\epsilon,i} \biggr) \biggl(\prod_{i=k_2+1}^{\ell(\lambda)+\ell(\mu)}{\bf w}_{\overline{\beta},\epsilon,i}\biggr){\bf w}_{\overline{\beta},\epsilon,k_2}
\biggl(\prod_{i=1}^{\ell(\lambda)+\ell(\mu)}c_{\overline{I}_i}\biggr)\\
&\equiv \biggl(\prod_{i=1}^{k_2-1}{\bf w}_{\overline{\beta},\epsilon,i}\biggr) \biggl(\prod_{i=k_2+1}^{\ell(\lambda)+\ell(\mu)}{\bf w}_{\overline{\beta},\epsilon,i}\biggr)
\biggl(\prod_{i=1}^{k_2-1}c_{\overline{I}_i}\biggr){\bf w}_{\overline{\beta},\epsilon,k_2}c_{\overline{I}_{k_2}}
\biggl(\prod_{i=k_2+1}^{\ell(\lambda)+\ell(\mu)}c_{\overline{I}_i}\biggr)\\
&\equiv \biggl(\prod_{i=1}^{k_2-1}{\bf w}_{\overline{\beta},\epsilon,i}\biggr) \biggl(\prod_{i=k_2+1}^{\ell(\lambda)+\ell(\mu)}{\bf w}_{\overline{\beta},\epsilon,i}\biggr)
\biggl(\prod_{i=1}^{k_2-1}c_{\overline{I}_i}\biggr)
\biggl(\prod_{i=k_2+1}^{\ell(\lambda)+\ell(\mu)}c_{\overline{I}_i}\biggr)
{\bf w}_{\overline{\beta},\epsilon,k_2}c_{\overline{I}_{k_2}}\\
&\equiv{\bf w}_{\overline{\beta},\epsilon,k_2}c_{\overline{I}_{k_2}}\biggl(\prod_{i=1}^{k_2-1}{\bf w}_{\overline{\beta},\epsilon,i} \biggr)\biggl(\prod_{i=k_2+1}^{\ell(\lambda)+\ell(\mu)}{\bf w}_{\overline{\beta},\epsilon,i}\biggr)
\biggl(\prod_{i=1}^{k_2-1}c_{\overline{I}_i}\biggr)
\biggl(\prod_{i=k_2+1}^{\ell(\lambda)+\ell(\mu)}c_{\overline{I}_i}\biggr)\\
&\equiv {\bf w}_{\overline{\beta},\epsilon,k_2}\biggl(\prod_{i=1}^{k_2-1}{\bf w}_{\overline{\beta},\epsilon,i} \biggr)\biggl(\prod_{i=k_2+1}^{\ell(\lambda)+\ell(\mu)}{\bf w}_{\overline{\beta},\epsilon,i}\biggr)c_{\overline{I}_{k_2}}
\biggl(\prod_{i=1}^{k_2-1}c_{\overline{I}_i}\biggr)
\biggl(\prod_{i=k_2+1}^{\ell(\lambda)+\ell(\mu)}c_{\overline{I}_i}\biggr)\\
&\equiv \biggl(\prod_{i=1}^{\ell(\lambda)+\ell(\mu)}{\bf w}_{\overline{\beta},\epsilon,i}\biggr)c_{\overline{I}_{k_2}}
\biggl(\prod_{i=1}^{k_2-1}c_{\overline{I}_i}\biggr)
\biggl(\prod_{i=k_2+1}^{\ell(\lambda)+\ell(\mu)}c_{\overline{I}_i}\biggr)\\
&\equiv-\biggl(\prod_{i=1}^{\ell(\lambda)+\ell(\mu)}{\bf w}_{\overline{\beta},\epsilon,i}\biggr) c_{\overline{I}}\\
&\equiv -{\bf {w_\beta}}c_{\overline{I}} \pmod{[\mhcn^{g},\mhcn^{g}]_{\overline{0}}+\sum_{\substack{\ell(u)<\ell(w)\\ I'\subset [1,n]}}{\rm R}{\bf u}c_{I'}},
\end{align*}
 where in the third, fourth and sixth equation, we have used Lemma \ref{smallcommut}; in the fourth and the last second equation, we have used \eqref{clifford}; and in the first, second, last third and last equation, we have used \eqref{bmforms} and $|\overline{I}|$ is even. Now use the assumption that $2\in {\rm R}$ is invertible, we have
 $$
 {\bf {w_\beta}}c_{\overline{I}}\in [\mhcn^{g},\mhcn^{g}]_{\overline{0}}+\sum_{\substack{\ell(u)<\ell(w)\\ I'\subset [1,n]}} {\rm R}{\bf u}c_{I'}.
$$
Hence we can assume all of $|\overline{I}_{m}|$ are even. If there exists some $k$ such that $|\overline{I}_{k}|\neq 0$, let $a=\min \overline{I}_{k}$ be the minimal index in $\overline{I}_{k},$ obverse that $a\neq r_{k+1}$.
Then we have
\begin{align*}
{\bf {w_\beta}}c_{\overline{I}}&\equiv \biggl(\prod_{i=1}^{\ell(\lambda)+\ell(\mu)}{\bf w}_{\overline{\beta},\epsilon,i}\biggr) \biggl(\prod_{i=1}^{\ell(\lambda)+\ell(\mu)}c_{\overline{I}_i}\biggr) \\
&\equiv \biggl(\prod_{i=1}^{\ell(\lambda)+\ell(\mu)}{\bf w}_{\overline{\beta},\epsilon,i}\biggr) \biggl(\prod_{i=1}^{k-1}c_{\overline{I}_i}\biggr)c_a c_{\overline{I}_k \backslash a}\biggl(\prod_{i=k+1}^{\ell(\lambda)+\ell(\mu)}c_{\overline{I}_i}\biggr)\\
&\equiv c_{a+1}\biggl(\prod_{i=1}^{\ell(\lambda)+\ell(\mu)}{\bf w}_{\overline{\beta},\epsilon,i}\biggr) \biggl(\prod_{i=1}^{k-1}c_{\overline{I}_i}\biggr) c_{\overline{I}_k \backslash a}\biggl(\prod_{i=k+1}^{\ell(\lambda)+\ell(\mu)}c_{\overline{I}_i}\biggr)\\
&\equiv\biggl(\prod_{i=1}^{\ell(\lambda)+\ell(\mu)}{\bf w}_{\overline{\beta},\epsilon,i}\biggr) \biggl(\prod_{i=1}^{k-1}c_{\overline{I}_i}\biggr) c_{\overline{I}_k \backslash a}\biggl(\prod_{i=k+1}^{\ell(\lambda)+\ell(\mu)}c_{\overline{I}_i}\biggr) c_{a+1}\\
&\equiv\biggl(\prod_{i=1}^{\ell(\lambda)+\ell(\mu)}{\bf w}_{\overline{\beta},\epsilon,i}\biggr) \biggl(\prod_{i=1}^{k-1}c_{\overline{I}_i}\biggr) c_{\overline{I}_k \backslash a}c_{a+1}\biggl(\prod_{i=k+1}^{\ell(\lambda)+\ell(\mu)}c_{\overline{I}_i}\biggr) \pmod{[\mhcn^{g},\mhcn^{g}]_{\overline{0}}+\sum_{\substack{\ell(u)<\ell(w)\\ I'\subset [1,n]}}{\rm R}{\bf u}c_{I'}}.
\end{align*}
Using relation \eqref{square}, \eqref{sc1}, we see that either $c_{\overline{I}_k \backslash a}c_{a+1}=\pm c_{I''}$ with $I''\subset [r_{m}+1,r_{m+1}],\,|I''|=|\overline{I}_k|-2$ or $c_{\overline{I}_k \backslash a}c_{a+1}=\pm c_{I''}$ with $I''\subset [r_{m}+1,r_{m+1}],\,|I''|=|\overline{I}_k|,\,\min I''>a$. In the second situation, we can repeat the computation above and after finite steps, we will arrive the first situation. However, in the first situation, put $\overline{\overline{I}}=\biggl(\bigsqcup_{m=1}^{k-1} \overline{I}_m\biggr) \bigsqcup I'' \bigsqcup\biggl(\bigsqcup_{m=k+1}^{\ell(\lambda)+\ell(\mu)} \overline{I}_m\biggr)$. This proves the claim of this step.

{\it Step 5.} We show that for any colored semi-bipartitions $\beta=(\lambda,\mu)$ of $n$, if $\beta\notin \widetilde{\CP_n}$ then ${\bf {w_\beta}}$ can be ${\rm R}$-linearly spanned by some elements of the form ${\bf u}c_{I'}$ with $\ell(u)<\ell(w)$.

 For some $1\leq i\leq \ell(\lambda)+\ell(\mu)$, suppose both of $\epsilon_i,\,\overline{\beta}_i$ are odd. We consider ${\bf {w_\beta}}c_{[r_{i}+1,r_{i+1}]}c^{-1}_{[r_{i}+1,r_{i+1}]}$, where $r_i,\epsilon_i$ is defined as in \eqref{ri}, \eqref{color}, using the corresponding \eqref{identification}. We have
 \begin{align*}
         {\bf {w_\beta}}&= {\bf {w_\beta}}c_{[r_{i}+1,r_{i+1}]}c^{-1}_{[r_{i}+1,r_{i+1}]}
         \equiv \biggl(\prod_{m=1}^{\ell(\lambda)+\ell(\mu)}
         {\bf w}_{\overline{\beta},\epsilon,m}\biggr)c_{[r_{i}+1,r_{i+1}]}c^{-1}_{[r_{i}+1,r_{i+1}]} \\
         &\equiv c^{-1}_{[r_{i}+1,r_{i+1}]}\biggl(\prod_{m=1}^{\ell(\lambda)+\ell(\mu)}{\bf
         w}_{\overline{\beta},\epsilon,m}\biggr)c_{[r_{i}+1,r_{i+1}]} \\
         &\equiv c_{r_{i+1}}\cdots c_{r_{i}+1}\biggl(\prod_{m=1}^{\ell(\lambda)+\ell(\mu)}{\bf w}_{\overline{\beta},\epsilon,m}\biggr) c_{r_{i}+1}c_{r_{i}+2}\cdots c_{r_{i+1}} \\
         &\equiv c_{r_{i+1}}\cdots c_{r_{i}+1} c_{r_{i}+2}c_{r_{i}+3}\cdots c_{r_{i+1}}\biggl(\prod_{i=1}^{\ell(\lambda)+\ell(\mu)}{\bf w}_{\overline{\beta},\epsilon,i}\biggr)c_{r_{i+1}} \\
         &\equiv c_{r_{i}+1}\biggl(\prod_{m=1}^{\ell(\lambda)+\ell(\mu)}{\bf w}_{\overline{\beta},\epsilon,m}\biggr)c_{r_{i+1}} \\
         &\equiv (-1)^{\epsilon_i} c_{r_{i}+1}^2\biggl(\prod_{m=1}^{\ell(\lambda)+\ell(\mu)}{\bf w}_{\overline{\beta},\epsilon,m}\biggr)\\
         &\equiv -{\bf {w_\beta}} \pmod{[\mhcn^{g},\mhcn^{g}]_{\overline{0}}+\sum_{\substack{\ell(u)<\ell(w)\\ I'\subset [1,n]}}{\rm R}{\bf u}c_{I'}},
 \end{align*}
 where in the last third and last second equation, we have used Lemma \ref{smallcommut} and in the last third equation, we have used relation \eqref{clifford} and the fact $\overline{\beta}_i$ is odd. This together with $2$ is invertible, implies that ${\bf {w_\beta}}\in [\mhcn^{g},\mhcn^{g}]_{\overline{0}}+\sum_{\substack{\ell(u)<\ell(w)\\ I'\subset [1,n]}}{\rm R}{\bf u}c_{I'}$.

 Similarly, if both of $\epsilon_i,\,\overline{\beta}_i$ are even, we have
 \begin{align*}
         {\bf {w_\beta}}&= {\bf {w_\beta}}c_{[r_{i}+1,r_{i+1}]}c^{-1}_{[r_{i}+1,r_{i+1}]}
         \equiv \biggl(\prod_{m=1}^{\ell(\lambda)+\ell(\mu)}{\bf w}_{\overline{\beta},\epsilon,m}\biggr)c_{[r_{i}+1,r_{i+1}]}c^{-1}_{[r_{i}+1,r_{i+1}]} \\
         &\equiv c^{-1}_{[r_{i}+1,r_{i+1}]}\biggl(\prod_{m=1}^{\ell(\lambda)+\ell(\mu)}{\bf w}_{\overline{\beta},\epsilon,m}\biggr)c_{[r_{i}+1,r_{i+1}]} \\
         &\equiv c_{r_{i+1}}\cdots c_{r_{i}+1}\biggl(\prod_{m=1}^{\ell(\lambda)+\ell(\mu)}{\bf w}_{\overline{\beta},\epsilon,m}\biggr) c_{r_{i}+1}c_{r_{i}+2}\cdots c_{r_{i+1}} \\
         &\equiv c_{r_{i+1}}\cdots c_{r_{i}+1} c_{r_{i}+2}c_{r_{i}+3}\cdots c_{r_{i+1}}\biggl(\prod_{m=1}^{\ell(\lambda)+\ell(\mu)}{\bf w}_{\overline{\beta},\epsilon,m}\biggr)c_{r_{i+1}} \\
         &\equiv -c_{r_{i}+1}\biggl(\prod_{m=1}^{\ell(\lambda)+\ell(\mu)}{\bf w}_{\overline{\beta},\epsilon,m}\biggr)c_{r_{i+1}} \\
         &\equiv (-1)^{\epsilon_i+1} c_{r_{i}+1}^2\biggl(\prod_{m=1}^{\ell(\lambda)+\ell(\mu)}{\bf w}_{\overline{\beta},\epsilon,m}\biggr)\\
         &\equiv -{\bf {w_\beta}} \pmod{[\mhcn^{g},\mhcn^{g}]_{\overline{0}}+\sum_{\substack{\ell(u)<\ell(w)\\ I'\subset [1,n]}}{\rm R}{\bf u}c_{I'}},
 \end{align*}
 where in the fifth and last second equation, we have used Lemma \ref{smallcommut} and in the last third equation, we have used relation \eqref{clifford} and the fact $\overline{\beta}_i$ is even. Again, this implies that ${\bf {w_\beta}}\in [\mhcn^{g},\mhcn^{g}]_{\overline{0}}+\sum_{\substack{\ell(u)<\ell(w)\\ I'\subset [1,n]}}{\rm R}{\bf u}c_{I'}$.
\end{proof}

We want to show that $\bigl\{{\bf w}_\beta+[\mhcn^{g},\mhcn^{g}]_{\overline{0}}\bigm|\beta\in\widetilde{\CP_n}\bigr\} $ does give a basis of ${\rm Tr}(\mhcn^{g})_{\overline{0}}$. Recall that we have fixed $d,m\in \NN$ and the definition of $\mathscr{P}^{\bullet,m}_n$.

\begin{lemma}\label{bij between partitions}
\begin{enumerate}
	\item Suppose $d=2m$ is even, then there is a bijection from the set $\widetilde{\CP_n}\subset \CP_n$ onto the set $\mathscr{P}^{\mathsf{0},m}_n.$
	\item Suppose $d=2m+1$ is odd, then there is a bijection from the set $\widetilde{\CP_n}\subset \CP_n$ onto the set $\mathscr{P}^{\mathsf{s},m}_n.$
	\end{enumerate}
\end{lemma}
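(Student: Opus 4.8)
The plan is to construct, in each case, an explicit size-preserving bijection by reorganising, for each integer $N\ge1$, the ``elementary pieces of size $N$'' on the two sides.

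First I would fix the bookkeeping. Unravelling the definitions in Subsection~\ref{minimal element}, a colored semi-bipartition $(\lambda,\mu)\in\CP_n$ is the same datum as a finite multiset of \emph{colored parts} $(\ell,c)$, $\ell\ge1$, $1\le c\le d-1$, together with a partition $\mu$, the sizes of the colored parts and the parts of $\mu$ summing to $n$; indeed the opposite-partition convention on $\lambda$ and the tie-break $c(i)\ge c(i+1)$ when $\lambda_i=\lambda_{i+1}$ mean that the multiset $\{(\lambda_i,c(i))\}$ determines, and is determined by, the pair $(\lambda,c)$. The condition $(\lambda,\mu)\in\widetilde{\CP_n}$ then says precisely that $\ell+c$ is odd for every colored part $(\ell,c)$ and that every part of $\mu$ is odd. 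On the other side, $\mathscr{P}^{\mathsf{0},m}_n=\mathscr{P}^{m}_n$ is the set of finite multisets of \emph{component parts} $(\ell,j)$, $\ell\ge1$, $1\le j\le m$, of total size $n$; and, applying the classical bijection between partitions into distinct parts and partitions into odd parts to the $0$-th component, $\mathscr{P}^{\mathsf{s},m}_n$ is identified with the set of pairs $(\kappa,\underline{\lambda})$ with $\kappa$ a partition into odd parts and $\underline{\lambda}\in\mathscr{P}^{m}_{n-|\kappa|}$.

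Next I would define the bijections one size at a time. Fix $N\ge1$. Suppose first $d=2m$. If $N$ is even, the colors $c\in\{1,\dots,2m-1\}$ with $N+c$ odd are exactly the $m$ odd numbers $1,3,\dots,2m-1$, and $\mu$ has no part of size $N$; match the colored part $(N,2j-1)$ with the component part $(N,j)$, $1\le j\le m$. If $N$ is odd, the admissible colors are the $m-1$ even numbers $2,4,\dots,2m-2$, and in addition $N$ may be a part of $\mu$; match $(N,2j)$ with $(N,j)$ for $1\le j\le m-1$, and match an occurrence of $N$ in $\mu$ with the component part $(N,m)$. Carrying this out for all $N$, with multiplicities, yields a bijection $\widetilde{\CP_n}\to\mathscr{P}^{m}_n$. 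For $d=2m+1$ the recipe is parallel, after the Euler identification of the previous paragraph: for $N$ even match $(N,2j-1)$ with $(N,j)$, $1\le j\le m$; for $N$ odd match $(N,2j)$ with $(N,j)$, $1\le j\le m$ (now all of $2,4,\dots,2m$ are admissible), and match an occurrence of $N$ in $\mu$ with an occurrence of $N$ in $\kappa$; this gives $\widetilde{\CP_n}\to\mathscr{P}^{\mathsf{s},m}_n$.

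The points that need genuine care are: (a) that the ``unravelling'' of the second paragraph is really a bijection, which comes down to the ordering conventions in the definitions of $\OP_{n,-}$ and of a color datum; (b) the per-size count, namely that for each $N$ the listed rule matches a complete set of source pieces of size $N$ with a complete set of target pieces of size $N$, which reduces to counting, inside $\{1,\dots,d-1\}$, the colors of parity opposite to that of $N$, the residual discrepancy being exactly supplied by $\mu$ (resp.\ $\kappa$) --- this is where the defining parity constraint of $\widetilde{\CP_n}$ and the precise range $1\le c\le d-1$ enter; and (c) invoking Euler's identity in the odd case. As a consistency check one may instead compare generating functions: the source side gives $\prod_{\ell\ \text{odd}}(1-q^\ell)^{-(m-1)}\prod_{\ell\ \text{even}}(1-q^\ell)^{-m}\prod_{\ell\ \text{odd}}(1-q^\ell)^{-1}=\prod_{\ell\ge1}(1-q^\ell)^{-m}=\sum_n|\mathscr{P}^{m}_n|q^n$ when $d=2m$, and $\prod_{\ell\ge1}(1-q^\ell)^{-m}\prod_{\ell\ \text{odd}}(1-q^\ell)^{-1}=\prod_{\ell\ge1}(1-q^\ell)^{-m}\prod_{\ell\ge1}(1+q^\ell)=\sum_n|\mathscr{P}^{\mathsf{s},m}_n|q^n$ when $d=2m+1$. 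I expect the bookkeeping in (b) --- counting admissible colors by parity --- to be the main, though entirely routine, obstacle.
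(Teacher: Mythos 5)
Your proposal is correct and follows essentially the same approach as the paper: both arguments reorganize the data of $(\lambda,\mu)\in\widetilde{\CP_n}$ according to the parity of the color, count the odd and even colors in $\{1,\dots,d-1\}$ (with $\mu$ supplying the residual odd factor), and invoke Euler's bijection between odd and strict partitions to handle the odd-level case. The only difference is presentational --- the paper first applies the identification $\vartheta_n\colon\CP_n\to\mathscr{P}^d_n$ and then pairs consecutive odd/even-indexed components via $\mathscr{P}_a\cong\bigsqcup_{b+c=a}\mathscr{P}^{\mathrm{ev}}_b\times\mathscr{P}^{\mathrm{odd}}_c$, whereas you do the identical parity bookkeeping directly, part-size by part-size.
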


\begin{proof}
  Clearly, there is bijection $\vartheta_n$ from the set $\CP_n$ onto the set $\mathscr{P}^{d}_{n}$ of $d$-partitions of $n$ such that \begin{enumerate}
\item the $1$-st component of $\vartheta_n(\lam,\mu)$ is $\mu$; and
\item for each $2\leq i\leq d$, the $i$-th component of $\vartheta_n(\lam,\mu)$ is the unique partition obtained by reordering the order of all the rows of $\lam$ colored by $i-1$.
\end{enumerate} Note that the image of $\widetilde{\CP_n}$ under $\vartheta_n$ is the set of  $d$-partitions of $n$ whose odd components correspond to odd partitions and even components correspond to even partitions. Since for any $a\in \NN$, there is a natural bijection $$\mathscr{P}_a \rightarrow \bigsqcup_{b+c=a}\mathscr{P}^{\text{ev}}_b\times \mathscr{P}^{\text{odd}}_c.
$$ This gives rise to a natural bijection from $\vartheta_n(\widetilde{\CP_n})$ on to $\mathscr{P}^{\mathsf{0},m}_n$ when $d$ is even and on to $\mathscr{P}^{\mathsf{s},m}_n$ when $d$ is odd.
\end{proof}

{\bf Proof of Theorem \ref{cocenter}}:
  We recall the generic cyclotomic Sergeev algebra $\mhcn^{h}$ defined over $\ZZ[\frac{1}{2}][Q_1,\cdots,Q_{m}]$ and ${\rm K}$ is the algebraic closure of the fraction filed of $\ZZ[\frac{1}{2}][Q_1,\cdots,Q_{m}]$. By Corollary \ref{semisimple2}, Lemma \ref{span} and Lemma \ref{bij between partitions}, we know that (\ref{generator1}) is a basis of ${\rm Tr}(\mhcn^h)_{\overline{0}}$ over ${\rm K}.$ Hence, (\ref{generator1}) is also linearly independent over $\ZZ[\frac{1}{2}][Q_1,\cdots,Q_{m}]$ by Proposition \ref{base change}. By Lemma \ref{span} again, we have that (\ref{generator1}) is a basis of ${\rm Tr}(\mhcn^h)_{\overline{0}}$ over $\ZZ[\frac{1}{2}][Q_1,\cdots,Q_{m}].$
  Now the result follows from the base change $\ZZ[\frac{1}{2}][Q_1,\cdots,Q_{m}]\rightarrow {\rm R}$ by Proposition \ref{base change}.
\qed

Using Proposition \ref{sym}, Theorem \ref{dengerate}  and Theorem \ref{cocenter}, we obtain the following.

\begin{corollary}\label{center}
	Suppose ${\rm R}$ is an integral domain with $2$ invertible. If the level $d=2m+1$ is odd, then ${\rm Z}(\mhcn^{g})_{\overline{0}}$ is a free ${\rm R}$-module. In particular, $\rank_{{\rm R}}{\rm Z}(\mhcn^{g})_{\overline{0}}=|\widetilde{\CP_n}|=|\mathscr{P}^{\mathsf{s},m}_n|.$
	\end{corollary}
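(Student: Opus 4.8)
The plan is to combine the three results already in hand: the symmetry of $\mhcn^{g}$ in odd level (Theorem \ref{dengerate}(i)), the general duality for symmetric superalgebras (Proposition \ref{sym}(i)), and the explicit basis of the cocenter (Theorem \ref{cocenter}). Since $d=2m+1$ is odd, Theorem \ref{dengerate}(i) tells us that $\mhcn^{g}$ is a symmetric superalgebra with symmetrizing form $t=t_{n,d}$; feeding this into Proposition \ref{sym}(i) produces a $({\rm Z}(\mhcn^{g}),{\rm Z}(\mhcn^{g}))$-supermodule isomorphism
\[
\hat t\colon {\rm Z}(\mhcn^{g})\ \xrightarrow{\ \sim\ }\ {\rm Hom}_{{\rm R}}\bigl({\rm Tr}(\mhcn^{g}),{\rm R}\bigr),\qquad a\mapsto t(-\cdot a).
\]

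Next I would pass to degree zero parts on both sides. The map $\hat t$ is homogeneous of degree $\overline0$, because $t$ annihilates $(\mhcn^{g})_{\overline1}$: for homogeneous $a$ and homogeneous $x$ one has $t(xa)=0$ unless $|x|+|a|=\overline0$, so $\hat t$ carries each homogeneous component of ${\rm Z}(\mhcn^{g})$ into the corresponding homogeneous component of ${\rm Hom}_{{\rm R}}\bigl({\rm Tr}(\mhcn^{g}),{\rm R}\bigr)$, where ${\rm R}$ sits in degree $\overline0$. Being a bijective graded map, $\hat t$ then restricts to an isomorphism on each graded component, so in particular ${\rm Z}(\mhcn^{g})_{\overline0}\cong{\rm Hom}_{{\rm R}}\bigl({\rm Tr}(\mhcn^{g}),{\rm R}\bigr)_{\overline0}$. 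Since ${\rm Tr}(\mhcn^{g})={\rm Tr}(\mhcn^{g})_{\overline0}\oplus{\rm Tr}(\mhcn^{g})_{\overline1}$, an ${\rm R}$-linear map ${\rm Tr}(\mhcn^{g})\to{\rm R}$ is even precisely when it kills ${\rm Tr}(\mhcn^{g})_{\overline1}$, whence
\[
{\rm Hom}_{{\rm R}}\bigl({\rm Tr}(\mhcn^{g}),{\rm R}\bigr)_{\overline0}={\rm Hom}_{{\rm R}}\bigl({\rm Tr}(\mhcn^{g})_{\overline0},{\rm R}\bigr),
\]
and hence ${\rm Z}(\mhcn^{g})_{\overline0}\cong{\rm Hom}_{{\rm R}}\bigl({\rm Tr}(\mhcn^{g})_{\overline0},{\rm R}\bigr)$.

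To finish I would invoke Theorem \ref{cocenter}: for odd $d$, the module ${\rm Tr}(\mhcn^{g})_{\overline0}$ is free over ${\rm R}$ with basis indexed by $\widetilde{\CP_n}$, of rank $|\widetilde{\CP_n}|=|\mathscr{P}^{\mathsf{s},m}_n|$. The ${\rm R}$-linear dual of a finite free ${\rm R}$-module is again free of the same rank, so ${\rm Hom}_{{\rm R}}\bigl({\rm Tr}(\mhcn^{g})_{\overline0},{\rm R}\bigr)$, and therefore ${\rm Z}(\mhcn^{g})_{\overline0}$, is free over ${\rm R}$ of rank $|\widetilde{\CP_n}|=|\mathscr{P}^{\mathsf{s},m}_n|$, recovering \cite[Theorem 5.61]{Ru}.

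I do not expect any genuine obstacle: each step is a direct citation of a result established above. The only place demanding a little care is the $\ZZ_2$-grading bookkeeping in the middle paragraph—checking that $\hat t$ is an even map and that the even part of the ${\rm R}$-dual of a supermodule is the ${\rm R}$-dual of its even part—but both facts follow immediately from $t\bigl((\mhcn^{g})_{\overline1}\bigr)=0$ together with the direct-sum decomposition of ${\rm Tr}(\mhcn^{g})$ into its even and odd parts.
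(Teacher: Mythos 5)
Your argument is correct and is exactly the paper's route: the paper derives Corollary \ref{center} by combining Theorem \ref{dengerate}(i), Proposition \ref{sym}(i) and Theorem \ref{cocenter}, and your middle paragraph merely spells out the $\ZZ_2$-grading bookkeeping (evenness of $\hat t$ and the identification ${\rm Hom}_{{\rm R}}({\rm Tr}(\mhcn^{g}),{\rm R})_{\overline0}={\rm Hom}_{{\rm R}}({\rm Tr}(\mhcn^{g})_{\overline0},{\rm R})$) that the paper leaves implicit.
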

	
\begin{remark} We remark that Corollary \ref{center} recovers Ruff's result on the rank of ${\rm Z}(\mhcn^{g})_{\overline{0}}$  \cite[Theorem 5.61]{Ru} when the level is odd. In fact, Ruff's index set $\mathcal{M}_{n}^{{\rm ev}}(d)$ ( \cite[Definition 5.54 (ii), Lemma 5.55]{Ru}) is defined as follows:
	\begin{equation}
		\mathcal{M}_{n}^{{\rm ev}}(d):=\biggl\{\undla=(\lambda^{(1)},\ldots,\lambda^{(d)})\in \mathscr{P}^{d}_{n} \biggm|\begin{matrix}
			(\lambda_j^{(i)}-1)d+i-1 \text{ are even }\\
			\text{ for all } 1\leq i \leq d, 1\leq j \leq \ell(\lambda^{(i)})
		\end{matrix}\biggr\}. \nonumber
	\end{equation}
	Therefore,
	if $d=2m+1,$ then by Lemma \ref{bij between partitions},
	\begin{equation}
		\mathcal{M}_{n}^{{\rm ev}}(d)
		=\biggl\{\undla=(\lambda^{(1)},\ldots,\lambda^{(d)})\in \mathscr{P}^{d}_{n} \biggm|\begin{matrix}
			\lambda^{(i)} \text{ is odd (even) partition }\\
			\text{ if $i$ is odd (even), } 1\leq i \leq d
		\end{matrix}\biggr\}\simeq \mathscr{P}^{\mathsf{s},m}_{n}. \nonumber
	\end{equation}
	\end{remark}
\section{On the super cocenter ${\rm Sup Tr}(\mhcn^{g})_{\overline{0}}$}
In this subsection, we will construct a family of linear generators for ${\rm Sup Tr}(\mhcn^{g})_{\overline{0}}.$

Recall the definition of $\CP_n$. We define another subset $\widehat{\CP_n}\subset \CP_n$ as follows:
\begin{equation}\label{label basis'}
  \widehat{\CP_n}:=\biggl\{\beta=(\lambda,\mu)\in\CP_n \biggm |
  \begin{matrix}
                               \bar{\beta_i} \neq \bar{\beta_j} \text{ if $\epsilon_i = \epsilon_j$ are both even, $ 1 \leq i\neq j \leq \ell (\lambda) + \ell (\mu)$}; \\
                                  \#\{1 \leq i \leq \ell (\lambda) + \ell (\mu) \mid \epsilon_i \text{ is even} \} \text{ is even}
  \end{matrix}\biggr\}.
\end{equation}  That is, $\beta=(\lambda,\mu)\in \widehat{\CP_n}$ if the parts corresponding to the same even color are distinct and the length of even color is even.

For each $\beta=(\lambda,\mu)\in \widehat{\CP_n},$ we define
$$ {\bf w}_{\beta}^{cl}:=\biggl(\prod_{i=1}^{\ell(\lambda)+\ell(\mu)} w_{\bar{\beta},\epsilon,i}\biggr)\biggl(\prod_{i=1}^{\ell(\lambda)+\ell(\mu)} c_{r_{i+1}}^{(1+(-1)^{\epsilon_i})/2} \biggr)\in \left(\mhcn^{g}\right)_{\bar{0}}.$$
Then we have
\begin{lemma}\label{span2} Let ${\rm R}$ be any commutative unital ring with $2$ invertible. As an ${\rm R}$-module, we have \begin{equation}\label{generator2}
{\rm SupTr}(\mhcn^{g})_{\overline{0}}=\text{\rm ${\rm R}$-Span}
\bigl\{ {\bf w}_{\beta}^{cl} + [\mhcn^{g},\mhcn^{g}]^{-}_{\overline{0}}\bigm | \beta \in \widehat{\CP_n} \bigr\} .
\end{equation}
\end{lemma}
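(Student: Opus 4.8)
The plan is to mimic the proof of Lemma~\ref{span} closely, replacing the ordinary commutator $[\mhcn^{g},\mhcn^{g}]_{\overline{0}}$ by the supercommutator $[\mhcn^{g},\mhcn^{g}]^{-}_{\overline{0}}$ throughout. First I would set $\check{\mhcn^{g}}:=\text{\rm ${\rm R}$-Span}\bigl\{{\bf w}_{\beta}^{cl}+[\mhcn^{g},\mhcn^{g}]^{-}_{\overline{0}}\bigm|\beta\in\widehat{\CP_n}\bigr\}$ and prove, by induction first on $\ell(w)$ and then on $|I|$, the claim that ${\bf w}c_I\in\check{\mhcn^{g}}$ for every reduced expression ${\bf w}$ of every $w\in W_{d,n}$ and every even subset $I\subset[n]$. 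The base case $\ell(w)=|I|=0$ is immediate. Steps~1--3 of the argument are verbatim the same as in Lemma~\ref{span}: using Lemma~\ref{expression} (which holds in $\mhcn^{g}$ independently of whether we quotient by commutators or supercommutators, since it is an identity in the algebra) together with the Clifford relations \eqref{sc1}, \eqref{sc2}, one reduces ${\bf w}c_I$ modulo lower-length terms to a combination of elements ${\bf w}_{\rho,\varepsilon}c_{\tilde I}$ with $\rho$ a composition, then to ${\bf w}_\alpha c_{\tilde{\tilde I}}$ with $\alpha$ a colored semi-bicomposition, then to ${\bf w}_\beta c_{\overline I}$ with $\beta$ a colored semi-bipartition. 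The only point to check is that these cyclic-permutation manipulations are still valid modulo supercommutators: this is fine because the even generators $s_0,\dots,s_{n-1}$ have parity $\overline 0$, so conjugating by them introduces no sign, and the Clifford generators only get moved around within the fixed word, not cyclically permuted past each other, in these three steps.

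The two genuinely new steps are the analogues of Steps~4 and~5, where the $\pm$ signs now reflect whether color blocks have even or odd cardinality of Clifford factors and whether $\overline{\beta}_i$ matches the parity of $\epsilon_i$. In Step~4 I would show that for $\beta=(\lambda,\mu)$ a colored semi-bipartition, the element ${\bf w}_\beta c_{\overline I}$ with $\overline I\neq\emptyset$ can be reduced modulo $[\mhcn^{g},\mhcn^{g}]^{-}_{\overline{0}}$ plus lower-length terms either to an element ${\bf w}_{\beta}^{cl}$ (when $\overline I$ is exactly the set of "marked endpoints" $r_{i+1}$ dictated by odd colors) or to an element with strictly fewer Clifford factors. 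The mechanism is the same chain of congruences as in Step~4 of Lemma~\ref{span}, using Lemma~\ref{smallcommut} to commute Clifford generators past the $w_{\overline{\beta},\epsilon,i}$, using \eqref{clifford} to reorder/cancel, using \eqref{square} to exploit that $s_0=x_1$ has $g(x_1)=0$, and cyclically permuting using the supercommutator; the invertibility of $2$ kills anything congruent to its own negative. The one new feature is that a cyclic permutation $ab\equiv(-1)^{|a||b|}ba$ now carries a sign when both $a$ and $b$ are odd, so I would need to bookkeep the parity of the various $w_{\overline{\beta},\epsilon,i}c_{\overline I_i}$ blocks; these parities are $|\overline I_i|\bmod 2$ together with a contribution from whether $s_0^{\epsilon_i}c_1$-type segments flip a Clifford, which is exactly why the definition of $\widehat{\CP_n}$ in \eqref{label basis'} requires the number of even colors to be even.

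In Step~5 I would show that if $\beta=(\lambda,\mu)\notin\widehat{\CP_n}$ then ${\bf w}_{\beta}^{cl}$ (equivalently the corresponding ${\bf w}_\beta c_{\overline I}$ surviving Step~4) lies in $[\mhcn^{g},\mhcn^{g}]^{-}_{\overline{0}}$ plus lower-length terms. There are two sub-cases. If two blocks $i\neq j$ carry the same even color $\epsilon_i=\epsilon_j$ with $\overline\beta_i=\overline\beta_j$, then the standard "swap two equal blocks" trick (as in the proof that repeated equal parts with equal colors give nothing, cf.\ the analogous step in \cite{HS,HuSS}) combined with the sign from cyclic permutation forces ${\bf w}_{\beta}^{cl}\equiv-{\bf w}_{\beta}^{cl}$; if the number of even colors is odd, a single conjugation by the relevant Clifford element $c_{[r_i+1,r_{i+1}]}$ — exactly as in the two displayed computations at the end of the proof of Lemma~\ref{span}, but now tracking the supercommutator sign — again yields ${\bf w}_{\beta}^{cl}\equiv-{\bf w}_{\beta}^{cl}$. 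Invertibility of $2$ finishes it. I expect the main obstacle to be precisely the sign bookkeeping in Steps~4 and~5: one must verify that every cyclic permutation introduces exactly the sign predicted by the parities of the segments being swapped, and that these signs conspire to leave surviving only the $\beta\in\widehat{\CP_n}$, with the normalization of ${\bf w}_{\beta}^{cl}$ (the choice of which endpoint $c_{r_{i+1}}$ to attach to odd-color blocks) being the representative that survives. Everything else is a faithful transcription of the arguments already carried out for ${\rm Tr}$.
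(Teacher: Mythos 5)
Your broad strategy---transplant the step-by-step reduction from Lemma~\ref{span} and redo the sign bookkeeping for supercommutators---is the same as the paper's, but the advance predictions you make at exactly the points you explicitly defer (``I expect the main obstacle to be precisely the sign bookkeeping'') are wrong in ways that the deferred bookkeeping would have caught. First, you say the Clifford endpoints $c_{r_{i+1}}$ are ``dictated by odd colors''; since $(1+(-1)^{\epsilon_i})/2 = 1$ exactly when $\epsilon_i$ is \emph{even}, the endpoint in ${\bf w}_{\beta}^{cl}$ is attached to the even-color blocks, not the odd ones. Second, you describe the reduction of ${\bf w}_\beta c_{\overline{I}}$ as ending either in ${\bf w}_{\beta}^{cl}$ or in ``an element with strictly fewer Clifford factors,'' but in the paper the cases where the Clifford pattern fails to match the colors are genuine vanishing arguments: if $\epsilon_m$ is odd and $\overline{I}_m = \{r_{m+1}\}$, or if $\epsilon_m$ is even and $\overline{I}_m = \emptyset$, then ${\bf w}_\beta c_{\overline{I}}$ is congruent to its own negative modulo supercommutators and lower-length terms and dies; it does not shrink. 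Third, you import the parity-matching condition $\overline{\beta}_i + \epsilon_i \equiv 1 \pmod 2$ from $\widetilde{\CP_n}$; that condition drives Step~5 of Lemma~\ref{span} in the ${\rm Tr}$ case but has no analogue in $\widehat{\CP_n}$, and there is no corresponding vanishing in the ${\rm SupTr}$ computation. Relatedly, you propose a separate vanishing argument ``if the number of even colors is odd,'' but that constraint is automatic: once the two vanishing cases above force $\overline{I}_i = \{r_{i+1}\}$ if and only if $\epsilon_i$ is even, the number of even colors equals $|\overline{I}|$, which is already even because you restricted to even $|I|$. Only your sub-case about two distinct blocks carrying the same even color and the same part matches what the paper actually does (its Step~4, which uses a conjugation by the block-swapping element $v$ rather than by a Clifford monomial). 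So the reduction to ${\bf w}_\beta c_{\overline{I}}$ does transcribe as you claim, but the ${\rm SupTr}$-specific steps are not a routine transcription of the ${\rm Tr}$ ones, and what you predict survives does not match the statement you are trying to prove.
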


\begin{proof}
Set $$
\widetilde{\mhcn^{g}}:=\text{\rm ${\rm R}$-Span}
\bigl\{ {\bf w}_{\beta}^{cl} + [\mhcn^{g},\mhcn^{g}]^{-}_{\overline{0}}\bigm | \beta \in \widehat{\CP_n} \bigr\} .
$$
We shall claim \begin{equation}
{\bf w}c_{I} \in \widetilde{\mhcn^{g}},\label{induction1} \text{for any reduced expression  ${\bf w}$ of $w\in W_{d,n}$ and $I\in [n]$ with $|I|$ even}
\end{equation} by induction upward on $\ell(w)$ and then upward on $|I|$.
The case $\ell(w)=|I|=0$ is again clear. By induction hypothesis and Lemma \ref{expression}, it suffices to show that there exists one reduced  expression ${\bf w}$ of $w$ such that ${\bf w}c_I\in \widetilde{\mhcn^{g}}$.

Following  completely similar {\it Steps 1, 2, 3} in the proof of Lemma \ref{span}, we only need to consider
$${\bf w}={\bf w}_{\beta},\, \beta=(\lambda,\mu)\in \CP_n .$$ The remaing proof is divided into 4 steps as follows:

{\it Step 1.} Suppose $I=\bigsqcup_{i=1}^{\ell(\lambda)+\ell(\mu)} I_i$ such that $I_{i}\subset [r_{i}+1,r_{i+1}]$, where $r_i$ is defined as in \eqref{ri} using the corresponding \eqref{identification}, we claim that ${\bf w}_{\beta} c_I$ can be ${\rm R}$-linearly spanned by some element of form ${\bf w}_{\beta} c_{\bar{I}}$, where $\bar{I}_{i}\subset \{r_{i+1}\}$, $|\bar{I}_i|\leq |I_i|$ and $|\bar{I}|$ is even, together with some elements of the form ${\bf u}c_{I'}$ with $\ell({\bf u})<\ell({\bf w}_{\beta})$. Actually, if $I_i=\emptyset,\forall 1\leq i\leq \ell(\lambda)+\ell(\mu)$, we are done. Otherwise, assume $I_m\neq \emptyset$ for some $m$ and $a={\text {min} } I_m$ be the minimal index in $I_m$. We may assume $a\neq r_{m+1}$, otherwise we are done. Using a similar computation in the second part of {\it Step 4} of the proof of Lemma \ref{span}, we have
\begin{align*}
	{\bf {w_\beta}}c_{\overline{I}}&\equiv \biggl(\prod_{i=1}^{\ell(\lambda)+\ell(\mu)}{\bf w}_{\overline{\beta},\epsilon,i}\biggr) \biggl(\prod_{i=1}^{\ell(\lambda)+\ell(\mu)}c_{I_i}\biggr) \\
	&\equiv \biggl(\prod_{i=1}^{\ell(\lambda)+\ell(\mu)}{\bf w}_{\overline{\beta},\epsilon,i}\biggr) \biggl(\prod_{i=1}^{m-1}c_{I_i}\biggr)c_a c_{I_m \backslash a}\biggl(\prod_{i=m+1}^{\ell(\lambda)+\ell(\mu)}c_{I_i}\biggr)\\
	&\equiv \pm c_{a+1}\biggl(\prod_{i=1}^{\ell(\lambda)+\ell(\mu)}{\bf w}_{\overline{\beta},\epsilon,i}\biggr) \biggl(\prod_{i=1}^{m-1}c_{I_i}\biggr) c_{I_m \backslash a}\biggl(\prod_{i=m+1}^{\ell(\lambda)+\ell(\mu)}c_{I_i}\biggr)\\
	&\equiv \pm \biggl(\prod_{i=1}^{\ell(\lambda)+\ell(\mu)}{\bf w}_{\overline{\beta},\epsilon,i}\biggr) \biggl(\prod_{i=1}^{m-1}c_{I_i}\biggr) c_{\overline{I}_m \backslash a}\biggl(\prod_{i=m+1}^{\ell(\lambda)+\ell(\mu)}c_{I_i}\biggr) c_{a+1}\\
	&\equiv \pm\biggl(\prod_{i=1}^{\ell(\lambda)+\ell(\mu)}{\bf w}_{\overline{\beta},\epsilon,i}\biggr) \biggl(\prod_{i=1}^{m-1}c_{I_i}\biggr) c_{I_m \backslash a}c_{a+1}\biggl(\prod_{i=m+1}^{\ell(\lambda)+\ell(\mu)}c_{I_i}\biggr) \pmod{[\mhcn^{g},\mhcn^{g}]_{\overline{0}}^{-}+\sum_{\substack{\ell(u)<\ell(w)\\ I'\subset [1,n]}}{\rm R}{\bf u}c_{I'}}.
\end{align*}

Again, using relation \eqref{square}, \eqref{sc1}, we can do induction upward on $I_m$ and then downward on $a={\text {min} } I_m$ to complete the claim of this step as in the second part of {\it Step 4} in the proof of Lemma \ref{span}.

{\it Step 2.}  Suppose $\bar{I}=\bigsqcup_{i=1}^{\ell(\lambda)+\ell(\mu)} \bar{I}_i$ such that $\bar{I}_{i}\subset [r_{i}+1,r_{i+1}]$, where $r_i$ is defined as in \eqref{ri} using the corresponding \eqref{identification}. We show that for any colored semi-bipartitions $\beta=(\lambda,\mu)$ of $n$, if $\epsilon_m$ is odd and $\{r_{m+1}\} =\bar{I}_m$, then $ {\bf w}_{\beta}c_{\bar{I}}$ can be ${\rm R}$-linearly spanned by some elements of the form ${\bf u}c_{I'}$ with $\ell(u)<\ell(w)$.

We write $\bar{I}=\biggl(\bigsqcup_{i=1}^{m-1} \bar{I}_i\biggr) \bigsqcup \{r_{m+1}\} \bigsqcup\biggl(\bigsqcup_{i=m+1}^{\ell(\lambda)+\ell(\mu)} \bar{I}_i\biggr),$ where $\bar{I}_{i}=\emptyset$ or $\{r_{i+1}\},$
and we assume that $\biggm|\bigsqcup_{i=1}^{m-1} \bar{I}_i \biggm|$ is even, $\biggm| \bigsqcup_{i=m+1}^{\ell(\lambda)+\ell(\mu)} \bar{I}_i \biggm|$ is odd.
Then we have
\begin{align*}
{\bf {w_\beta}}c_{\bar{I}}&\equiv \biggl(\prod_{i=1}^{\ell(\lambda)+\ell(\mu)}{\bf w}_{\overline{\beta},\epsilon,m}\biggr) \biggl(\prod_{i=1}^{\ell(\lambda)+\ell(\mu)}c_{\bar{I}_{i}}\biggr) \qquad &\\
&\equiv \biggl(\prod_{i=1}^{\ell(\lambda)+\ell(\mu)}{\bf w}_{\overline{\beta},\epsilon,i}\biggr) \biggl(\prod_{i=1}^{m-1}c_{\bar{I}_{i}}\biggr)c_{r_{m+1}} \biggl(\prod_{i=m+1}^{\ell(\lambda)+\ell(\mu)}c_{\bar{I}_{i}}\biggr)\qquad &\\
&\equiv (-1)^{\epsilon_m}c_{r_{m}+1}\biggl(\prod_{i=1}^{\ell(\lambda)+\ell(\mu)}{\bf w}_{\overline{\beta},\epsilon,i}\biggr) \biggl(\prod_{i=1}^{m-1}c_{\bar{I}_{i}}\biggr) \biggl(\prod_{i=m+1}^{\ell(\lambda)+\ell(\mu)}c_{\bar{I}_{i}}\biggr) \qquad &\Biggl({\text {$\biggm|\bigsqcup_{i=1}^{m-1} \bar{I}_i \biggm|$ is even and Lemma \ref{smallcommut}}}\Biggr)\\
&\equiv -c_{r_{m}+1}\biggl(\prod_{i=1}^{\ell(\lambda)+\ell(\mu)}{\bf w}_{\overline{\beta},\epsilon,m}\biggr) \biggl(\prod_{i=1}^{m-1}c_{\bar{I}_{i}}\biggr) \biggl(\prod_{i=m+1}^{\ell(\lambda)+\ell(\mu)}c_{\bar{I}_{i}}\biggr)\qquad &\biggl({\text {$\epsilon_m$ is odd}}\biggr)\\
&\equiv\biggl(\prod_{i=1}^{\ell(\lambda)+\ell(\mu)}{\bf w}_{\overline{\beta},\epsilon,i}\biggr) \biggl(\prod_{i=1}^{m-1}c_{\bar{I}_{i}}\biggr) \biggl(\prod_{i=m+1}^{\ell(\lambda)+\ell(\mu)}c_{\bar{I}_{i}}\biggr)c_{r_{m}+1}\qquad &\\
&\equiv -\biggl(\prod_{i=1}^{\ell(\lambda)+\ell(\mu)}{\bf w}_{\overline{\beta},\epsilon,i}\biggr) \biggl(\prod_{i=1}^{m-1}c_{\bar{I}_{i}}\biggr) c_{r_{m}+1} \biggl(\prod_{i=m+1}^{\ell(\lambda)+\ell(\mu)}c_{\bar{I}_{i}}\biggr)\qquad &\Biggl({\text{ $\biggm| \bigsqcup_{i=m+1}^{\ell(\lambda)+\ell(\mu)} \bar{I}_i \biggm|$ is odd}}\Biggr)  \\
&\equiv -c_{r_{m}+2}\biggl(\prod_{i=1}^{\ell(\lambda)+\ell(\mu)}{\bf w}_{\overline{\beta},\epsilon,i}\biggr) \biggl(\prod_{i=1}^{m-1}c_{\bar{I}_{i}}\biggr) \biggl(\prod_{i=m+1}^{\ell(\lambda)+\ell(\mu)}c_{\bar{I}_{i}}\biggr)\qquad &\Biggl({\text {$\biggm|\bigsqcup_{i=1}^{m-1} \bar{I}_i \biggm|$ is even and Lemma \ref{smallcommut}}}\Biggr)\\
&\equiv \biggl(\prod_{i=1}^{\ell(\lambda)+\ell(\mu)}{\bf w}_{\overline{\beta},\epsilon,i}\biggr) \biggl(\prod_{i=1}^{m-1}c_{\bar{I}_{i}}\biggr) \biggl(\prod_{i=m+1}^{\ell(\lambda)+\ell(\mu)}c_{\bar{I}_{i}}\biggr)c_{r_{m}+2}\qquad &\\
&\equiv -\biggl(\prod_{i=1}^{\ell(\lambda)+\ell(\mu)}{\bf w}_{\overline{\beta},\epsilon,i}\biggr) \biggl(\prod_{i=1}^{m-1}c_{\bar{I}_{i}}\biggr) c_{r_{m}+2} \biggl(\prod_{i=m+1}^{\ell(\lambda)+\ell(\mu)}c_{\bar{I}_{i}}\biggr) \qquad &\\
&\equiv \cdots \qquad &\\
&\equiv -\biggl(\prod_{i=1}^{\ell(\lambda)+\ell(\mu)}{\bf w}_{\overline{\beta},\epsilon,i}\biggr) \biggl(\prod_{i=1}^{m-1}c_{\bar{I}_{i}}\biggr) c_{r_{m+1}} \biggl(\prod_{i=m+1}^{\ell(\lambda)+\ell(\mu)}c_{\bar{I}_{i}}\biggr)\qquad &\\
&\equiv -{\bf {w_\beta}}c_{\bar{I}}
 \pmod{[\mhcn^{g},\mhcn^{g}]^{-}_{\overline{0}}+\sum_{\substack{\ell(u)<\ell(w)\\ I'\subset [1,n]}}{\rm R}{\bf u}c_{I'}}.\qquad &
\end{align*}
 This together with $2$ is invertible, implies that ${\bf {w_\beta}}c_{\bar{I}}\in [\mhcn^{g},\mhcn^{g}]^{-}_{\overline{0}}+\sum_{\substack{\ell(u)<\ell(w)\\ I'\subset [1,n]}}{\rm R}{\bf u}c_{I'}$. If $\biggm|\bigsqcup_{i=1}^{m-1} \bar{I}_i \biggm|$ is odd, and $\biggm| \bigsqcup_{i=m+1}^{\ell(\lambda)+\ell(\mu)} \bar{I}_i \biggm|$ is even, the computation is similar.

{\it Step 3.}  Suppose $\bar{I}=\bigsqcup_{i=1}^{\ell(\lambda)+\ell(\mu)} \bar{I}_i$ such that $I_{i}\subset [r_{i}+1,r_{i+1}]$, where $r_i$ is defined as in \eqref{ri} using the corresponding \eqref{identification}. We show that for any colored semi-bipartitions $\beta=(\lambda,\mu)$ of $n$, if $\epsilon_m$ is even, $\bar{I}_m=\emptyset$, then $ {\bf w}_{\beta}c_{\bar{I}}$ can be ${\rm R}$-linearly spanned by some elements of the form ${\bf u}c_{I'}$ with $\ell(u)<\ell(w)$. In fact,
\begin{align*}
	{\bf {w_\beta}}c_{\bar{I}} & = {\bf {w_\beta}}c_{\bar{I}}c_{r_{m}+1}c_{r_{m}+1}\qquad &\\
		&\equiv -c_{r_{m}+1} \biggl(\prod_{i=1}^{\ell(\lambda)+\ell(\mu)}{\bf w}_{\overline{\beta},\epsilon,i}\biggr)
		c_{\bar{I}}c_{r_{m}+1} \qquad &\\
		&\equiv -(-1)^{\epsilon_m} \biggl(\prod_{i=1}^{\ell(\lambda)+\ell(\mu)}{\bf w}_{\overline{\beta},\epsilon,i}\biggr)
		c_{r_{m+1}} c_{\bar{I}}c_{r_{m}+1}\qquad & \biggl({\text{Lemma \ref{smallcommut}}}\biggr) \\
		&\equiv - \biggl(\prod_{i=1}^{\ell(\lambda)+\ell(\mu)}{\bf w}_{\overline{\beta},\epsilon,i}\biggr)
		c_{r_{m+1}} c_{\bar{I}}c_{r_{m}+1} \qquad&\Biggl( {\text{$\eps_m$ is even}}\Biggr)\\
		&\equiv \biggl(\prod_{i=1}^{\ell(\lambda)+\ell(\mu)}{\bf w}_{\overline{\beta},\epsilon,i}\biggr)
		c_{\bar{I}}c_{r_{m}+1}c_{r_{m+1}} \qquad &\\
		&\equiv -c_{r_{m+1}} \biggl(\prod_{i=1}^{\ell(\lambda)+\ell(\mu)}{\bf w}_{\overline{\beta},\epsilon,i}\biggr)
		c_{\bar{I}}c_{r_{m}+1} \qquad &\\
		&\equiv - \biggl(\prod_{i=1}^{\ell(\lambda)+\ell(\mu)}{\bf w}_{\overline{\beta},\epsilon,m}\biggr)c_{r_{m+1}-1}
		c_{\bar{I}}c_{r_{m}+1} \qquad & \biggl({\text{Lemma \ref{smallcommut}}}\biggr)\\
		&\equiv \cdots\qquad &\\
	&\equiv -\biggl(\prod_{i=1}^{\ell(\lambda)+\ell(\mu)}{\bf w}_{\overline{\beta},\epsilon,i}\biggr)
	c_{r_{m}+1}c_{\bar{I}}c_{r_{m}+1}\qquad &\\
	&\equiv -\biggl(\prod_{i=1}^{\ell(\lambda)+\ell(\mu)}{\bf w}_{\overline{\beta},\epsilon,i}\biggr)
	c_{\bar{I}}c_{r_{m}+1}^{2}\qquad &\Biggl({\text {$|\bar{I}|$ is even}}\Biggr)\\
	&\equiv -{\bf {w_\beta}}c_{\bar{I}}
	\pmod{[\mhcn^{g},\mhcn^{g}]^{-}_{\overline{0}}+\sum_{\substack{\ell(u)<\ell(w)\\ I'\subset [1,n]}}{\rm R}{\bf u}c_{I'}}. \qquad &
	\end{align*}  This together with $2$ is invertible, again implies that ${\bf {w_\beta}}c_{\bar{I}}\in [\mhcn^{g},\mhcn^{g}]^{-}_{\overline{0}}+\sum_{\substack{\ell(u)<\ell(w)\\ I'\subset [1,n]}}{\rm R}{\bf u}c_{I'}$.

{\it Step 4.}  Suppose $\bar{I}=\bigsqcup_{i=1}^{\ell(\lambda)+\ell(\mu)} \bar{I}_i$ such that $I_{i}\subset [r_{i}+1,r_{i+1}]$, where $r_i$ is defined as in \eqref{ri} using the corresponding \eqref{identification}. By {\it Step 3, 4}, we may assume that $\bar{I}_i=\{r_{i+1}\}$ if $\epsilon_i$ is even, and $\bar{I}_i=\emptyset$ if $\epsilon_i$ is odd. We show that for any colored semi-bipartitions $\beta=(\lambda,\mu)$ of $n$, if $\epsilon_i = \epsilon_{j}$ is even and $\bar{\beta}_i= \bar{\beta}_{j}$ for $1\leq i<j \leq \ell(\lambda)+\ell(\mu),$ then $ {\bf w}_{\beta}c_{\bar{I}}$ can be ${\rm R}$-linearly spanned by some elements of the form ${\bf u}c_{I'}$ with $\ell(u)<\ell(w)$.

By assumption, we have $\bar{I}_i=\{r_{i+1}\},$ $\bar{I}_{j}=\{r_{j+1}\}$ since $\epsilon_i = \epsilon_{j}$ is even. We set
$$v:= \prod_{k=1}^{\bar{\beta}_i}\biggl( s_{r_{j}+k-1}\cdots s_{r_{i}+k+1}s_{r_{i}+k}s_{r_{m}+k+1}\cdots s_{r_{j}+k-1}\biggl)\in \left(\mhcn^{g}\right)_{\bar{0}}.$$
As in the second step of the proof of \cite[Theorem 4.3]{HS}, we can prove
\begin{align*}
v{\bf w}_{\beta}v^{-1}
&\equiv v\prod_{m=1}^{\ell(\lambda)+\ell(\mu)}{\bf w}_{\overline{\beta},\epsilon,m} v^{-1}\\
&\equiv v\biggl( \prod_{m=1}^{i-1} {\bf w}_{\overline{\beta},\epsilon,m}\biggr)v^{-1} \left(v{\bf w}_{\overline{\beta},\epsilon,i}v^{-1}\right) v\biggl( \prod_{m=i+1}^{j-1} {\bf w}_{\overline{\beta},\epsilon,m}\biggr)v^{-1} \left(v{\bf w}_{\overline{\beta},\epsilon,j}v^{-1}\right)
v\biggl( \prod_{m=j+1}^{\ell(\lambda)+\ell(\mu)} {\bf w}_{\overline{\beta},\epsilon,m}\biggr)v^{-1}\\
&\equiv \biggl( \prod_{m=1}^{i-1} {\bf w}_{\overline{\beta},\epsilon,m}\biggr) {\bf w}_{\overline{\beta},\epsilon,j} \biggl( \prod_{m=i+1}^{j-1} {\bf w}_{\overline{\beta},\epsilon,m}\biggr) {\bf w}_{\overline{\beta},\epsilon,i}
\biggl( \prod_{m=j+1}^{\ell(\lambda)+\ell(\mu)} {\bf w}_{\overline{\beta},\epsilon,m}\biggr)\\
&\equiv {\bf w}_{\beta}
\pmod{[\mhcn^{g},\mhcn^{g}]^{-}_{\overline{0}}+\sum_{\substack{\ell(u)<\ell(w)\\ I'\subset [1,n]}}{\rm R}{\bf u}c_{I'}}.
\end{align*}
Thus we have
\begin{align*}
{\bf w}_{\beta}c_{\bar{I}}
&\equiv v{\bf w}_{\beta}c_{\bar{I}}v^{-1}\\
&\equiv v{\bf {w_\beta}}v^{-1}v\biggl(\prod_{m=1}^{i-1}c_{\bar{I}_{m}}\biggr)v^{-1} \biggl(vc_{r_{i+1}}v^{-1} \biggr) v\biggl(\prod_{m=i+1}^{j-1}c_{\bar{I}_{m}}\biggr)v^{-1} \biggl( vc_{r_{j+1}}v^{-1}\biggr) v\biggl(\prod_{i=j+1}^{\ell(\lambda)+\ell(\mu)}c_{\bar{I}_{m}}\biggr)v^{-1}\\
&\equiv {\bf {w_\beta}}\biggl(\prod_{m=1}^{i-1}c_{\bar{I}_{m}}\biggr) c_{r_{j+1}} \biggl(\prod_{m=i+1}^{j-1}c_{\bar{I}_{m}}\biggr) c_{r_{i+1}} \biggl(\prod_{i=j+1}^{\ell(\lambda)+\ell(\mu)}c_{\bar{I}_{m}}\biggr)\\
&\equiv -{\bf {w_\beta}}\biggl(\prod_{m=1}^{i-1}c_{\bar{I}_{m}}\biggr) c_{r_{i+1}} \biggl(\prod_{m=i+1}^{j-1}c_{\bar{I}_{m}}\biggr) c_{r_{j+1}} \biggl(\prod_{i=j+1}^{\ell(\lambda)+\ell(\mu)}c_{\bar{I}_{m}}\biggr)\\
&\equiv -{\bf w}_{\beta}c_{\bar{I}}
\pmod{[\mhcn^{g},\mhcn^{g}]^{-}_{\overline{0}}+\sum_{\substack{\ell(u)<\ell(w)\\ I'\subset [1,n]}}{\rm R}{\bf u}c_{I'}}.
\end{align*}
This together with $2$ is invertible, implies that ${\bf {w_\beta}}c_{\bar{I}}\in [\mhcn^{g},\mhcn^{g}]^{-}_{\overline{0}}+\sum_{\substack{\ell(u)<\ell(w)\\ I'\subset [1,n]}}{\rm R}{\bf u}c_{I'}$.
\end{proof}

Combining Proposition \ref{sym}, Theorem \ref{dengerate} and  Lemma \ref{span2}, we obtain the following result on the centers of cyclotomic Sergeev algbras of even level.
\begin{corollary}\label{upperbound}Suppose  $F$ is any field with ${\rm char F}\neq 2.$ If the level $d$ is even, then $\dim_{{\rm F}}{\rm Z}(\mhcn^{g})_{\overline{0}}\leq |\widehat{\CP_n}|.$
\end{corollary}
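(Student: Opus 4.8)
The plan is to combine the three cited results exactly as announced just before the statement; over a field the corollary is a formal consequence of facts already established. First I would record that $F$ is an integral domain of characteristic $\neq 2$ and that $\mhcn^{g}$ is finite-dimensional over $F$, hence finitely generated free as an $F$-module, so the standing hypotheses of Definition~\ref{symmetric} and Proposition~\ref{sym} hold with ${\rm R}=F$. Since $d$ is even, Theorem~\ref{dengerate}(ii) supplies a supersymmetrizing form $t=t_{n,d}$ on $\mhcn^{g}$, and Proposition~\ref{sym}(ii) then gives a $\bigl({\rm SupZ}(\mhcn^{g}),{\rm SupZ}(\mhcn^{g})\bigr)$-supermodule isomorphism
$${\rm SupZ}(\mhcn^{g})\ \xrightarrow{\ \sim\ }\ {\rm Hom}_{F}\bigl({\rm SupTr}(\mhcn^{g}),F\bigr),\qquad z\mapsto t(-\cdot z).$$

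Next I would pass to degree-zero parts. The displayed map is even: for homogeneous $z$ the functional $t(-\cdot z)$ is non-zero on a homogeneous $x$ only when $|x|=|z|$ (as $t$ kills $(\mhcn^{g})_{\overline{1}}$), so $t(-\cdot z)$ is supported on $(\mhcn^{g})_{|z|}$ and hence has parity $|z|$ in ${\rm Hom}_{F}(\mhcn^{g},F)$; moreover it descends to ${\rm SupTr}(\mhcn^{g})$ by the computation in the proof of Proposition~\ref{sym}(ii). Because $F$ lives in degree $\overline{0}$, one has ${\rm Hom}_{F}\bigl({\rm SupTr}(\mhcn^{g}),F\bigr)_{\overline{0}}={\rm Hom}_{F}\bigl({\rm SupTr}(\mhcn^{g})_{\overline{0}},F\bigr)$; and since $\bigl({\rm SupZ}(\mhcn^{g})\bigr)_{\overline{0}}={\rm Z}(\mhcn^{g})_{\overline{0}}$ (noted in Subsection~\ref{basics}), the isomorphism restricts to
$${\rm Z}(\mhcn^{g})_{\overline{0}}\ \cong\ {\rm Hom}_{F}\bigl({\rm SupTr}(\mhcn^{g})_{\overline{0}},F\bigr),$$
so that $\dim_{F}{\rm Z}(\mhcn^{g})_{\overline{0}}=\dim_{F}{\rm SupTr}(\mhcn^{g})_{\overline{0}}$.

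Finally I would apply Lemma~\ref{span2} with ${\rm R}=F$ (valid since $2\in F^{\times}$): it exhibits the set $\bigl\{{\bf w}_{\beta}^{cl}+[\mhcn^{g},\mhcn^{g}]^{-}_{\overline{0}}\bigm|\beta\in\widehat{\CP_n}\bigr\}$ as an $F$-spanning set of ${\rm SupTr}(\mhcn^{g})_{\overline{0}}$, which has at most $|\widehat{\CP_n}|$ elements. Hence $\dim_{F}{\rm SupTr}(\mhcn^{g})_{\overline{0}}\le|\widehat{\CP_n}|$, and combining with the previous paragraph yields $\dim_{F}{\rm Z}(\mhcn^{g})_{\overline{0}}\le|\widehat{\CP_n}|$.

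I do not expect a real obstacle here: the whole weight of the argument rests on Theorem~\ref{dengerate} (supersymmetry in even level) and Lemma~\ref{span2} (the explicit spanning set of the $\overline{0}$-part of the supercocenter), both already proved. The one place that needs attention is the parity bookkeeping of the middle paragraph — confirming that the comparison map of Proposition~\ref{sym}(ii) is homogeneous of degree $\overline{0}$ and that forming $\overline{0}$-parts commutes with ${\rm Hom}_{F}(-,F)$, which is exactly where the hypothesis that the coefficient ring $F$ is purely even is used.
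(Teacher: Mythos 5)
Your proposal is correct and follows exactly the paper's route: the paper's proof is precisely the combination of Theorem \ref{dengerate}(ii), Proposition \ref{sym}(ii), and Lemma \ref{span2}, with the degree-zero parity bookkeeping (using $({\rm SupZ}(\mhcn^{g}))_{\overline{0}}={\rm Z}(\mhcn^{g})_{\overline{0}}$ and that $F$ is purely even) left implicit, which you have simply made explicit.
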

Recall  the definition of $\mathscr{MP}^{\bullet,m}_{n}$ \eqref{simple modules of type M}.
\begin{conjecture}\label{conj}
	Suppose ${\rm R}$ is an integral domain with $2$ invertible. Then ${\rm SupTr}(\mhcn^{g})_{\overline{0}}$ is a free ${\rm R}$-module. Moreover,  $$\rank_{{\rm R}}{\rm SupTr}(\mhcn^{g})_{\overline{0}}=\begin{cases}|\mathscr{MP}^{\mathsf{0},m}_{n}|, &\text{if $d$ is even,}\\
		 |\mathscr{MP}^{\mathsf{s},m}_{n}|, &\text{if $d$ is odd.}
		\end{cases}$$
	\end{conjecture}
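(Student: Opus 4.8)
The plan is to reuse the mechanism of the proof of Theorem~\ref{cocenter}: once the spanning set of Lemma~\ref{span2} is trimmed to the conjectured cardinality, everything else is formal. Write $S:=\ZZ[\frac{1}{2}][Q_1,\dots,Q_m]$ and let ${\rm K}$ be the algebraic closure of its fraction field, with the generic cyclotomic Sergeev algebra $\mhcn^{h}$. From Example~\ref{examples}, Lemma~\ref{semisimple} and Theorem~\ref{rep} one reads off $\dim_{{\rm K}}{\rm SupTr}(\mhcn^{h}\otimes_S{\rm K})_{\overline{0}}=|\mathscr{MP}^{\bullet,m}_n|$: each simple block of type \texttt{M} contributes a one-dimensional supercocenter sitting in degree $\overline{0}$, and each block of type \texttt{Q} contributes nothing. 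Consequently \emph{any} ${\rm R}$-spanning set of ${\rm SupTr}(\mhcn^{g})_{\overline{0}}$ of size $|\mathscr{MP}^{\bullet,m}_n|$ automatically becomes a ${\rm K}$-basis after base change, hence, by Proposition~\ref{base change}, is $S$-linearly independent and (still spanning) an $S$-basis, so ${\rm SupTr}(\mhcn^{h})_{\overline{0}}$ is $S$-free of that rank; a final base change $S\to{\rm R}$ via Proposition~\ref{base change} yields the statement for arbitrary ${\rm R}$. Thus there is no separate ``linear independence'' step, and the entire content of the conjecture is the combinatorial reduction below.

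The real work is to cut $\bigl\{{\bf w}_\beta^{cl}+[\mhcn^{g},\mhcn^{g}]^{-}_{\overline{0}}\mid\beta\in\widehat{\CP_n}\bigr\}$ down to a subset indexed by some $\widehat{\widehat{\CP_n}}\subset\widehat{\CP_n}$ with $|\widehat{\widehat{\CP_n}}|=|\mathscr{MP}^{\bullet,m}_n|$, since in general $|\widehat{\CP_n}|>|\mathscr{MP}^{\bullet,m}_n|$ (already for $d=2$, $n=3$, where $|\widehat{\CP_3}|=4>3=|\mathscr{P}^{\mathsf{0},1}_3|$). I would produce the missing relations by conjugation arguments in the spirit of Steps~4 and 5 of Lemma~\ref{span} and Step~4 of Lemma~\ref{span2}: conjugating ${\bf w}_\beta^{cl}$ by a product of Clifford generators $c_{[r_i+1,r_{i+1}]}$ and/or by a braid-type element $v$ pushes the attached $c_{r_{i+1}}$ past $s_0^{\epsilon_i}$ (producing $(-1)^{\epsilon_i}$ by \eqref{sc2}) and past the $s_j$'s (producing signs governed by the part $\bar\beta_i$ via Lemma~\ref{smallcommut}); whenever the resulting scalar is $-1$, invertibility of $2$ forces ${\bf w}_\beta^{cl}$ into $[\mhcn^{g},\mhcn^{g}]^{-}_{\overline{0}}$ modulo strictly shorter elements, so that $\beta$ can be discarded. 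I expect the surviving $\beta=(\lambda,\mu)$ to be precisely those for which parts carrying an odd color have one fixed parity and parts carrying an even nonzero color the opposite parity, so that, through the bijection $\mathscr{P}_a\xrightarrow{\sim}\bigsqcup_{b+c=a}\mathscr{P}^{\text{ev}}_b\times\mathscr{P}^{\text{odd}}_c$ used in Lemma~\ref{bij between partitions}, the colors $2j-1$ and $2j$ fuse into the $j$-th partition of an $m$-partition, while the color-$0$ block $\mu$ is a strict partition of \emph{even} length; a $\vartheta_n$-type bijection as in Lemma~\ref{bij between partitions} should then match $\widehat{\widehat{\CP_n}}$ with $\mathscr{MP}^{\bullet,m}_n$.

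The hard part is making this precise. One must check that each conjugation produces the scalar $-1$ exactly under the parity hypotheses claimed, and then reconcile three a priori different families of conditions — ``distinct parts within each even color and an even total number of even-colored blocks'' from \eqref{label basis'}, the parity constraint needed to run the $\mathscr{P}^{\text{ev}}\times\mathscr{P}^{\text{odd}}$ fusion, and ``$\mu$ strict of even length'' — so that the surviving index set has cardinality \emph{exactly} $|\mathscr{MP}^{\bullet,m}_n|$; it is here that the type \texttt{M} versus type \texttt{Q} dichotomy has to re-emerge purely combinatorially, and matching the two counts on the nose is the crux. One also has to verify that the new reductions are compatible with the earlier steps in the proof of Lemma~\ref{span2} and that the double induction (upward on $\ell(w)$, then on $|I|$) still closes. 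Because super-commutators carry different signs from ordinary commutators, the set $\widehat{\widehat{\CP_n}}$ will genuinely differ from the set $\widetilde{\CP_n}$ governing ${\rm Tr}(\mhcn^{g})_{\overline{0}}$. Finally, the case $d=1$ is special precisely because no trimming is needed — there $|\widehat{\CP_n}|=|\mathscr{MP}^{\mathsf{s},0}_n|$ already — which is why Theorem~\ref{supercocenter} follows from the first paragraph alone.
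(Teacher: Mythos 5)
This statement is labelled a \emph{conjecture} in the paper, and the paper does not prove it in general: the only case the paper settles is $d=1$ (Theorem~\ref{supercocenter}), where $\widehat{\CP_n}=\mathscr{MP}^{\mathsf{s},0}_n$ already holds on the nose so no trimming of the Lemma~\ref{span2} spanning set is needed, and the base-change argument of Theorem~\ref{cocenter} closes the case. Your first paragraph reproduces that base-change mechanism correctly (semisimplicity of $\mhcn^h$ over ${\rm K}$ gives the target dimension via Corollary~\ref{semisimple2}; a spanning set of matching cardinality over the generic ring $S=\ZZ[\frac12][Q_1,\dots,Q_m]$ is then automatically an $S$-basis, and Proposition~\ref{base change} transports this to any ${\rm R}$), and this is exactly the route the paper itself would take if it had the right spanning set.

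Your diagnosis of what is missing is accurate and well-documented: in general $|\widehat{\CP_n}|>|\mathscr{MP}^{\bullet,m}_n|$, so Lemma~\ref{span2} is not sharp — your count $|\widehat{\CP_3}|=4>3=|\mathscr{P}^{\mathsf{0},1}_3|$ for $d=2$ is correct — and one needs additional relations in ${\rm SupTr}(\mhcn^{g})_{\overline{0}}$ to trim the index set. However, this is precisely where your argument stops: the conjugation computations that would produce the scalar $-1$ under the hypothesized parity conditions are not carried out, the candidate index set $\widehat{\widehat{\CP_n}}$ is not defined, the three families of combinatorial conditions you list are not reconciled into a bijection with $\mathscr{MP}^{\bullet,m}_n$, and compatibility with the double induction of Lemma~\ref{span2} is left unchecked. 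You flag all of this yourself, and you are right to: each of these is a genuine open step, not a routine verification. So what you have written is a correct and useful analysis of why the conjecture is still a conjecture — identifying the bottleneck and proposing a plausible attack in the spirit of Steps~4–5 of Lemma~\ref{span} — but it is not a proof, and the paper does not supply one either.
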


\begin{theorem} \label{supercocenter}
	For $d=1$, the above Conjecture \ref{conj} holds.
	\end{theorem}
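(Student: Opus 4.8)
The plan is to deduce Theorem~\ref{supercocenter} from Lemma~\ref{span2} together with the semisimple computation over the generic ring, after observing that for $d=1$ the generating set of Lemma~\ref{span2} already has the smallest possible cardinality. Note first that $d=1$ forces $m=0$ and $g(x)=x$ (the only surviving coefficient is $a_1=a_d=1$). So the first step is purely combinatorial: unwinding the definition~\eqref{label basis'} of $\widehat{\CP_n}$ in this degenerate situation. When $d=1$ the colour set $\{1,\dots,d-1\}$ is empty, hence every $\beta=(\lambda,\mu)\in\CP_n$ has $\lambda=\emptyset$, $\mu$ an ordinary partition of $n$, and the associated sequence $\epsilon$ is identically $0$. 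The two conditions in~\eqref{label basis'} therefore collapse: ``$\bar\beta_i\ne\bar\beta_j$ whenever $\epsilon_i=\epsilon_j$ are both even'' becomes ``$\mu$ is strict'', and ``$\#\{i:\epsilon_i\text{ even}\}$ is even'' becomes ``$\ell(\mu)$ is even''. Thus $\widehat{\CP_n}$ is identified with the set of strict partitions of $n$ of even length. On the other hand $\mathscr{P}^{0}_{n-a}=\varnothing$ unless $a=n$, so by~\eqref{simple modules of type M} the set $\mathscr{MP}^{\mathsf{s},0}_{n}$ is \emph{also} identified with the set of strict partitions of $n$ of even length; in particular $|\widehat{\CP_n}|=|\mathscr{MP}^{\mathsf{s},0}_{n}|$.

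Next I would prove freeness over $\ZZ[\frac{1}{2}]$. By Lemma~\ref{span2}, ${\rm SupTr}(\mhcn^{g})_{\overline{0}}$ is spanned over $\ZZ[\frac{1}{2}]$ by the $|\widehat{\CP_n}|$ elements ${\bf w}_{\beta}^{cl}+[\mhcn^{g},\mhcn^{g}]^{-}_{\overline{0}}$, $\beta\in\widehat{\CP_n}$. Because $d=1$ carries no deformation parameters ($m=0$, $h(x)=x=g(x)$, and $\ZZ[\frac{1}{2}][Q_1,\dots,Q_m]=\ZZ[\frac{1}{2}]$), the generic algebra $\mhcn^{h}$ over ${\rm K}$ is nothing but $\mhcn^{g}\otimes_{\ZZ[\frac{1}{2}]}{\rm K}$. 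Proposition~\ref{base change}, which respects the $\ZZ_2$-grading, then gives
\[
{\rm SupTr}(\mhcn^{h})_{\overline{0}}\;\cong\;{\rm K}\otimes_{\ZZ[\frac{1}{2}]}{\rm SupTr}(\mhcn^{g})_{\overline{0}},
\]
whose ${\rm K}$-dimension equals $|\mathscr{MP}^{\mathsf{s},0}_{n}|=|\widehat{\CP_n}|$ by Theorem~\ref{rep} and Corollary~\ref{semisimple2}. A spanning set of $|\widehat{\CP_n}|$ elements in a ${\rm K}$-vector space of dimension $|\widehat{\CP_n}|$ is a basis, so the elements ${\bf w}_{\beta}^{cl}+[\mhcn^{g},\mhcn^{g}]^{-}_{\overline{0}}$ are linearly independent over ${\rm K}$, a fortiori over $\ZZ[\frac{1}{2}]$. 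Combined with the spanning statement of Lemma~\ref{span2} this shows that ${\rm SupTr}(\mhcn^{g})_{\overline{0}}$ is a free $\ZZ[\frac{1}{2}]$-module with basis $\bigl\{{\bf w}_{\beta}^{cl}+[\mhcn^{g},\mhcn^{g}]^{-}_{\overline{0}}\bigm|\beta\in\widehat{\CP_n}\bigr\}$.

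Finally, for an arbitrary integral domain ${\rm R}$ with $2$ invertible there is a ring homomorphism $\ZZ[\frac{1}{2}]\to{\rm R}$, and Proposition~\ref{base change} shows that the ${\rm R}$-form of ${\rm SupTr}(\mhcn^{g})_{\overline{0}}$ is obtained from the $\ZZ[\frac{1}{2}]$-form by extension of scalars; it is therefore free over ${\rm R}$ with basis the image of $\bigl\{{\bf w}_{\beta}^{cl}+[\mhcn^{g},\mhcn^{g}]^{-}_{\overline{0}}\bigm|\beta\in\widehat{\CP_n}\bigr\}$ and rank $|\widehat{\CP_n}|=|\mathscr{MP}^{\mathsf{s},0}_{n}|$. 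Since $d=1$ is odd, this is exactly the assertion of Conjecture~\ref{conj}, which completes the proof.

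The essential content sits in the already-established Lemma~\ref{span2} and in the dimension count of Corollary~\ref{semisimple2} (via Theorem~\ref{rep}); the only new input for $d=1$ is the observation that the cardinality $|\widehat{\CP_n}|$ of the Lemma~\ref{span2} spanning set matches the generic dimension $|\mathscr{MP}^{\mathsf{s},0}_{n}|$ on the nose, so that no further relations are needed. This is precisely where the argument stops generalising, and it is what I expect to be the real obstacle to Conjecture~\ref{conj} for $d\ge 2$: there one anticipates $|\widehat{\CP_n}|$ to be strictly larger than $|\mathscr{MP}^{\bullet,m}_{n}|$, so one would have to produce additional supercommutator relations trimming the spanning set down to a basis before the same base-change scheme could be run.
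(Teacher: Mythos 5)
Your proposal is correct and follows exactly the same route as the paper: the combinatorial identification that for $d=1$ the set $\widehat{\CP_n}$ consists of strict partitions of $n$ of even length and hence coincides with $\mathscr{MP}^{\mathsf{s},0}_n$, followed by the counting-and-base-change argument already used for Theorem~\ref{cocenter}. The paper states this tersely (``the remaining proof is the same as Theorem~\ref{cocenter}''); you have simply spelled out the same steps, including the observation that $m=0$ makes the generic parameter ring trivial so that $\mhcn^h$ is just $\mhcn^g\otimes_{\ZZ[\frac{1}{2}]}{\rm K}$.
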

	
	\begin{proof}
		When $d=1,$ then $m=0$ and we have $\widehat{\CP_n}$ is the set of strict partition of $n$ with even length which exactly equals to $\mathscr{MP}^{\mathsf{s},0}_{n}$ by definition. The remaining proof is the same as Theorem \ref{cocenter}.
		\end{proof}

\providecommand{\bysame}{\leavevmode\hbox to3em{\hrulefill}\thinspace}
\providecommand{\MR}{\relax\ifhmode\unskip\space\fi MR }
\providecommand{\MRhref}[2]{%

\href{http://www.ams.org/mathscinet-getitem?mr=#1}{#2} }
\providecommand{\href}[2]{#2}


\begin{thebibliography}{10}

\bibitem[A1]{A1} {\sc S.~Ariki}, {\em On the semi-simplicity of the Hecke algebra of $(\mathbb{Z}/r\mathbb{Z})\wr\Sym_n$}, J. Algebra, {\bf 169} (1994), 216--225.

\bibitem[A2]{Ariki:can}
{\sc S.~Ariki}, {\em On the decomposition numbers of the {Hecke} algebra of {$G(m,1,n)$}}, J. Math. Kyoto Univ., {\bf 36} (1996), 789--808.

\bibitem[A3]{A3} S. Ariki,  {\em Lectures on Cyclotomic Hecke Algebras}
Quantum Groups and Lie Theory, 1--22, DOI: https://doi.org/10.1017/CBO9780511542848.002.
\bibitem[AK]{AK} {\sc S.~Ariki and K.~Koike}, {\em  A Hecke algebra of $(\mathbb{Z}/r\mathbb{Z})\wr\Sym_n$ and construction of its representations}, Adv. Math., {\bf 106} (1994), 216--243.

\bibitem[BK1]{BK:GradedKL}
{\sc J.~Brundan and A.~Kleshchev},  {\em Blocks of cyclotomic {H}ecke algebras and {K}hovanov-{L}auda algebras}, Invent. Math., {\bf 178}
(2009), 451--484.

\bibitem[BK2]{BK01}
{\sc J. Brundan, A. Kleshchev},
{\em Hecke-Clifford superalgebras, crystals of type $A_{2l}^{(2)}$, and modular branching rules for $\widehat{S}_{n}$}, Represent. Theory, 5 (2001), 317–-403.

\bibitem[BM]{BM}
{\sc K.~Bremke and G.~Malle}, {\em Reduced words and a length function for $G(e,1,n)$}, Indag. Mathem., {\bf 8}(4) (1997),
453--469.


\bibitem[BrM]{BM:cyc}
{\sc M.~Brou{\'e} and G.~Malle}, {\em Zyklotomische {H}eckealgebren},
in: Repr{\'e}sentations Unipotentes G{\'e}n{\'e}riques et Blocs des
Groupes R{\'e}ductifs Finis,
Ast\'erisque., {\bf 212} (1993), 119--189.

\bibitem[C]{C} {\sc I.V.~Cherednik}, {\em A new interpretation of Gelfand-Tzetlin bases}, Duke Math. J., {\bf 54}(2)(1987), 563--577.

\bibitem[GP]{GP}
{\sc M.~Geck, G.~Pfeiffer}, {\em On the irreducible Characters of Hecke algebras}, Adv. Math., {\bf 102} (1993), 79--94.

\bibitem[HS]{HS}
{\sc J.~Hu and L.~Shi}, {\em Proof of the Center Conjectures for the cyclotomic Hecke and KLR algebras of type $A$}, preprint, arXiv:2211.07069, (2022).

\bibitem[HSS]{HuSS}
{\sc J.~Hu, X.~Shi and L.~Shi}, {\em On the minimal elements in conjugacy classes of the complex reflection group $G(r,1,n)$}, to appear in J. Pure App. Algebra, (2024).

\bibitem[KKT]{KKT}
{\sc S.~J. Kang, M.~Kashiwara and S.~Tsuchioka}, {\em Quiver Hecke Superalgebras}, J. Reine Angew. math., {\bf 711} (2016), 1--54.

\bibitem[KKO1]{KKO1}
{\sc S.~J. Kang, M.~Kashiwara and S.~j Oh}, {\em Supercategorification of quantum Kac-Moody
	algebras}, Adv. Math., {\bf 242} (2013), 116--162.

\bibitem[KKO2]{KKO2}
\leavevmode\vrule height 2pt depth -1.6pt width 23pt,  {\em Supercategorification of quantum Kac-Moody
	algebras II}, Adv. Math., {\bf 265} (2014), 169--240.
	
\bibitem[K1]{K1} A. Kleshchev, {\em Completely splittable representations
	of symmetric groups}, J. Algebra {\bf 181} (1996), 584--592.


\bibitem[K2]{K2} A. Kleshchev, {\em Linear and Projective
	Representations of Symmetric Groups}, Cambridge University Press,
2005.

\bibitem[K3]{K3} A. Kleshchev, {\em  Representation Theory of symmetric groups and related Hecke algebras},  Bulletin (New Series) of the American Mathematical Society 47 (2010), 419--481.
Volume 47, Number 3, July 2010, Pages 419–481

\bibitem[Ma] {Ma} A.~Mathas, {\em The representation theory of the Ariki-Koike and cyclotomic $q$-Schur algebras},
Representation theory of algebraic groups and quantum groups, Adv. Studies Pure Math., {\bf 40} (2004), 261--320.

\bibitem[MM]{MM} {\sc G.~Malle and A.~Mathas}, {\em Symmetric cyclotomic Hecke algebras}, {J. Algebra,} {\bf 205}(1), (1998), 275--293.

\bibitem[Na]{Na2} M. Nazarov, {\em Young's symmetrizers for projective representations
	of the symmetric group}, Adv. Math. {\bf 127} (1997), no. 2,
190--257.

\bibitem[Ra]{Ram91} {\sc A.~Ram}, {\em A Frobenius formula for the characters of the Hecke algebras}, {\bf 106} (1991), 461--488.

\bibitem[Ru]{Ru}
{\sc O.~Ruff}, {\em Centers of cyclotomic Sergeev superalgebras}, J. Algebra {\bf 331} (2011), 490--511.

 \bibitem[Sch]{Sch}  I.~Schur,
{\"Uber die Darstellung der symmetrischen und der alternierenden
	Gruppe durch gebrochene lineare Substitutionen}, J. Reine Angew.
Math. {\bf 139} (1911), 155--250.

\bibitem[SVV]{SVV}
{\sc P.~Shan, M.~Varagnolo and E.~Vasserot}, {\em On the center of quiver-Hecke algebras}, Duke Math. J., {\bf 166}(6) (2017), 1005--1101.

\bibitem[SW]{SW}
{\sc L.~Shi and J.~Wan }, {\em On representation theory of cyclotomic Hecke-Clifford algebras}, preprint, arXiv:2501.06763, (2025).


\bibitem[St]{St}
{\sc A.J.~Starkey}, Characters of the Generic Hecke Algebra of a System of $BN$-pairs, Ph.D. thesis, University of Warwick, 1975.

\bibitem[WW]{WW}
{\sc J.~Wan and W.~Wang }, {\em Frobenius character formula and spin generic degrees
	for Hecke–Clifford algebra}, Proc. London Math. Soc., {\bf 106}(3) (2013), 287--317.
\end{thebibliography}
\end{document}